\colorlet{shadecolor}{LavenderBlush3}
\newcommand{\R}{\mathbb R}
\newcommand{\IR}{{\mathbb R}}
\newcommand{\IC}{{\mathbb C}}
\newcommand{\IP}{{\mathbb P}}
\newcommand{\IZ}{{\mathbb Z}}
\newcommand{\Z}{\mathbb{Z}}
\newcommand{\IN}{{\mathbb N}}
\newcommand{\IH}{{\mathbb H}}
\newcommand{\C}{{\mathbb C}}
\renewcommand{\H}{{\mathbb H}}
\newcommand{\Q}{{\mathbb Q}}
\newcommand{\CA}{\mathscr{A}}
\newcommand{\sgn}{\mbox{sgn}}
\newcommand{\U}{\mathrm{U}}
\newcommand{\SL}{\mathrm{SL}}
\theoremstyle{plain}
\newtheorem{thm}{Theorem}[section]
\newtheorem{cor}[thm]{Corollary}
\newtheorem{lem}[thm]{Lemma}
\newtheorem{lemma}[thm]{Lemma}
\newtheorem{prop}[thm]{Proposition}
\newtheorem*{remark}{Remark}
\newtheorem*{remarks}{Remarks}
\newtheorem{rem}[thm]{Remark}
\theoremstyle{definition}
\newtheorem{defn}[thm]{Definition}
\numberwithin{equation}{section}
\newcommand{\pmat}[1]{\left( \smallmatrix #1 \endsmallmatrix \right)}
\newcommand{\mat}[1]{\left( \begin{matrix} #1 \end{matrix} \right)}
\renewcommand{\sgn}{\textnormal{sgn}}
\def\lp{\left(}
\def\rp{\right)}
\def\lsp{(}
\def\rsp{)}
\def\lb{\left[}
\def\rb{\right]}
\def\a{\alpha}
\def\b{\beta}
\def\d{\delta}
\def\l{\lambda}
\def\z{\zeta}
\def\ve{\varepsilon}
\def\e{\varepsilon}
\def\nn{{\bm n}}
\def\s{\sigma}
\def\g{\gamma}
\def\t{\tau}
\def\DD{\Delta}
\def\del{ \partial}
\newcommand{\re}{{\rm Re}}
\newcommand{\im}{{\rm Im}}
\renewcommand{\sgn}{{\rm sgn}}
\def\wt{\widetilde}
\def\wh{\widehat}
\def\bar{\overline}
\newcommand{\andd}{\quad \mbox{ and } \quad}
\newcommand{\where}{\quad \mbox{ where }}
\setlist[itemize]{noitemsep, topsep=0pt}
\newcommand{\vast}{\bBigg@{2}}
\newcommand{\Vast}{\bBigg@{5}}
\newcommand{\calU}{\mathcal{U}}
\newcommand{\nolisttopbreak}{\par\nobreak\@afterheading}
\newcommand{\eu}{\mathrm{eu}}
\newcommand{\od}{\mathrm{od}}
\renewcommand{\pmod}[1]{\    \lp  \mathrm{mod} \  #1 \rp}
\newcommand{\mT}{\Theta}
\newcommand{\Ci}{\mathrm{Ci}}
\newcommand{\Si}{\mathrm{Si}}
\newcommand{\PV}{\operatorname{PV}}
\newcommand{\psum}{\sideset{}{^*}\sum}
\newcommand{\KK}{\mathcal{K}}
\title{Precision Asymptotics for Partitions Featuring False-indefinite Theta Functions}
\author{Kathrin Bringmann}
\address{University of Cologne, Department of Mathematics and Computer Science, Weyertal 86-90, 50931 Cologne, Germany}
\email{cnazarog@uni-koeln.de}
\author{William Craig}
\address{Department of Mathematics, United States Naval Academy, 572C Holloway Road
	Mail Stop 9E. Annapolis, MD 21402}
\email{wcraig@usna.edu}
\author{Caner Nazaroglu}
\address{University of Cologne, Department of Mathematics and Computer Science, Weyertal 86-90, 50931 Cologne, Germany}
\email{cnazarog@uni-koeln.de}
\subjclass[2020]{11F30, 11E45}
\keywords{Asymptotics, Circle Method, false-indefinite theta functions, Maass forms, partitions.}
\begin{document}

\begin{abstract}
Andrews--Dyson--Hickerson, Cohen build a striking relation between $q$-hypergeometric series, real quadratic fields, and Maass forms. Thanks to the works of Lewis--Zagier and Zwegers we have a complete understanding on the part of these relations pertaining to Maass forms and false-indefinite theta functions. In particular, we can systematically distinguish and study the class of false-indefinite theta functions related to Maass forms. A crucial component here is the framework of mock Maass theta functions built by Zwegers in analogy with his earlier work on indefinite theta functions and their application to  Ramanujan's mock theta functions. Given this understanding, a natural question is to what extent one can utilize modular properties to investigate the asymptotic behavior of the associated Fourier coefficients, especially in view of their relevance to combinatorial objects. In this paper, we develop the relevant methods to study such a question and show that quite detailed results can be obtained on the asymptotic development, which also enable Hardy--Ramanujan--Rademacher type exact formulas under the right conditions. We develop these techniques by concentrating on a concrete example involving partitions with parts separated by parity and derive an asymptotic expansion that includes all the exponentially growing terms.
\end{abstract}

\maketitle

\section{Introduction and Statement of Results}
In the last few decades, there has been an extensive body of research on objects which are close to modular forms. A major strain here has been Ramanujan's mock theta functions, which include $q$-series such as,
with {\it $q$-Pochhammer symbol} $(a;q)_n:=\prod_{j=0}^{n-1}(1 - a q^j)$ for $a\in\C$, $n\in\mathbb N_0\cup\{\infty\}$,
\begin{align*}
    f(q) := \sum_{n \geq 0} \dfrac{q^{n^2}}{\lp -q;q \rp_n^2}.
\end{align*}
Although $f(q)$ and Ramanujan's other mock theta functions are not directly modular, they do have certain non-holomorphic ``completions" that make them modular.
This was first discovered through Zwegers' work on indefinite theta functions \cite{ZwThesis}
and was unified conveniently into the framework of harmonic Maass forms as formulated by Bruinier--Funke \cite{BF}.
Since then, the theory of mock modular forms has been developed quite broadly and had many applications (see \cite{BFOR} for a review).

The mock theta functions are not the only examples of ``modular adjacent'' objects discovered by Ramanujan.
A prominent example is the $\s$-function appearing in his lost notebook \cite{Ram}
\begin{equation}\label{eq:sigma_qhypergeometric_representation}
    \s(q) := \sum_{n \geq 0} \dfrac{q^{\frac{n(n+1)}{2}}}{\lp -q;q \rp_n}.
\end{equation}
Most strikingly, it was discovered by Andrews--Dyson--Hickerson \cite{ADH} and Cohen \cite{C} that $\s$ forms one of the legs in a three-way relationship involving $q$-hypergeometric series, real-quadratic fields, and Maass forms. This has since been generalized to many other cases; see \cite{BK,BLR,BN,CFLZ,Lo,Lo2,LoOs} and references therein for further examples. The connection to real-quadratic fields is formed by recasting $\s (q)$ as a false-indefinite theta function \cite[Theorem 1]{ADH}
\begin{equation} \label{eq:sigma_theta_representation}
\sigma(q) = \sum_{\substack{n \geq 0 \\ |j| \leq n}} (-1)^{n+j} q^{\frac{n(3n+1)}{2} - j^2} \lp 1 - q^{2n+1} \rp.
\end{equation}
A {\it false-indefinite theta function} is a theta function on a Lorentzian lattice that differs from the well-known indefinite theta functions of Zwegers \cite{ZwThesis} by the insertion of extra sign factors, similar to the relation between false theta functions and classical theta functions.

The connection to Maass forms, on the other hand, was developed by
Cohen~\cite{C}. He showed that the Fourier coefficients of $\s$ along with those of a
``complementary function''
\begin{align*}
\sigma^*(q) := 2 \sum_{n \geq 0} \dfrac{(-1)^n q^{n^2}}{\lp q;q^2 \rp_n}
\end{align*}
form the Fourier coefficients of a Maass form. As in the case of the mock theta functions, it is then natural to search for a modular framework. A major development in this direction came from the mock Maass theta functions of Zwegers \cite{Zw} and the Eichler-type integrals of Lewis--Zagier \cite{LZ} that connects false-indefinite theta functions to such mock Maass theta functions.\footnote{In modern language, $\s$ and $\s^*$ along with their limits to rationals combine into a holomorphic quantum modular form. This was in fact one of the first examples given by Zagier \cite{Zag} in developing the concept of quantum modularity.}
These are not modular in general, but become Maass forms for a class of false-indefinite theta functions, to which $\s$ and $\s^*$ belong.

With this understanding, it is natural to ask what insights we can gain out of this relationship, given the wide-ranging uses of modularity. For instance, one of the major applications of modularity is to the investigation of the asymptotics of the associated Fourier coefficients.
A fundamental example is the work of Hardy--Ramanujan \cite{HR1, HR2} that established the modern Circle Method. Their work examines \textit{partitions}, which are
non-increasing sequences $\lambda = \lp \lambda_1, \lambda_2, \dots, \lambda_\ell \rp$ of positive integers with \textit{size} $|\lambda| := \lambda_1 + \dots + \lambda_\ell$.
An immediate question is to determine and estimating the {\it partition function} $p(n)$ that counts the number of partitions of size $n$.
Hardy and Ramanujan's main result is an asymptotic expansion for $p(n)$, whose first term gives the famous asymptotic
\begin{equation*}
p(n) \sim \dfrac{1}{4\sqrt{3}n} e^{\pi\sqrt{\frac{2n}{3}}} \ \
\mbox{as } n\to\infty.
\end{equation*}
In fact, their work shows much more: by truncating their full asymptotic expansion after $\asymp \sqrt{n}$ terms, the formula of Hardy--Ramanujan can be rounded to the nearest integer in order to obtain the value of $p(n)$. Later, Rademacher \cite{Rad1, Rad2} refined this work to obtain an exact formula for $p(n)$ that is indeed convergent and not merely asymptotic. Rademacher's formula takes the shape
\begin{equation} \label{E: Partition formula}
p(n) = \frac{2\pi}{\lp 24 n -1 \rp^{\frac{3}{4}}} \
\sum_{k\geq1} \frac{A_k (n)}{k} I_{\frac{3}{2}} \! \lp \frac{\pi}{6k} \sqrt{24n-1} \rp,
\end{equation}
where $A_k (n)$ is an appropriate Kloosterman sum and $I_r$ denotes the $I$-Bessel function of order $r$.
Rademacher's work and the follow-up work of Rademacher and Zuckermann \cite{RZ,Zuck} demonstrates that the key point behind the proof is that the generating function for $p(n)$ is (essentially) a modular form of non-positive weight. As a consequence, there are Hardy--Ramanujan--Rademacher type exact formulas for a wide class of partitions. For example, if $r_o (n)$ denotes the number of partitions of $n$ into distinct odd parts, the corresponding generating function
\begin{equation*}
\sum_{n\geq0} r_o (n) q^n = \prod_{n\geq0} \left(1+q^{2n+1}\right)
\end{equation*}
is (basically) a weight zero modular form. This fact leads to an exact formula for $r_o (n)$ of the form
\begin{equation} \label{E: r_o formula}
r_o (n) =
\frac{ \pi}{\sqrt{24n-1}}  \sum_{k\geq1}  \frac{K_k (n)}{k}
I_1 \lp \frac{\pi}{12k} \sqrt{24n-1} \rp
\end{equation}
(see Theorem 5 of \cite{Hag}) with $K_k (n)$ an appropriate Kloosterman sum.
There are now many generalizations of such methods, whereby exact formulas are obtained also for generating functions that combine modular forms with modular-adjacent objects. Exact formulas are known, for example, in contexts involving weakly holomorphic modular forms multiplying mock modular forms \cite{BM}, mock modular forms of depth two \cite{BN3}, and false modular forms \cite{BN2}. In particular, \cite{BN2} gives an exact formula
in a similar combinatorial context
for the number of unimodal sequences. The goal in this paper is to extend these techniques to a new context involving false-indefinite theta functions.

More specifically, we develop the machinery to find high-precision asymptotic formulas by building around the example of partitions with parts separated by parity, which were studied by Andrews \cite{And1,And2}.
A partition $\lambda$ has {\it parts separated by parity} if all of its even parts are larger than all of its odd parts, or if the reverse is true. One may then consider functions counting such partitions, which could be refined with a number of further conditions. For example, in the notation of Andrews, $p_{\mathrm{od}}^{\mathrm{eu}}(n)$ counts the number of partitions of $n$ whose even parts are larger than odd parts, and where odd parts must be distinct and likewise even parts are unrestricted.
Many similar functions can be constructed in this way; the generating functions for these partitions were considered in \cite{And1,And2,BJ}.
We note that of the eight functions that can be constructed from this notation, seven of them have direct connections to modular forms, as well as to false and mock modular objects, where the technology for deriving exact formulas is already developed.
We focus in this paper on the last case, namely $p_{\mathrm{od}}^{\mathrm{eu}}(n)$, which is related to false-indefinite theta functions and Maass forms.

In particular, this relation manifests itself by writing the generating function for $p_{\mathrm{od}}^{\mathrm{eu}}(n)$,
\begin{align*}
F_\od^\eu(q) := \sum_{n \geq 0} p_\od^\eu(n) q^n = 1 + q + q^2 + 2q^3 + 3q^4 + 3q^5 + 4q^6 + 5q^7 + 8q^8 + 8q^9 + \cdots,
\end{align*}
in the form (see \cite{And2, BJ} as well as comments in \cite{BCN})
\begin{equation} \label{Eq: F_od^eu generating function}
F_\od^\eu(q) = \frac{1}{\lp q^2;q^2 \rp_\infty} \lp 1 - \frac{\s (-q)}{2}
+ \frac{\lp -q; -q \rp_\infty}{2} \rp,
\end{equation}
where $\s$ is the function defined in \eqref{eq:sigma_qhypergeometric_representation}. In our considerations, we focus on the part of the
generating function $F_{\mathrm{od}}^{\mathrm{eu}}$ that involves false-indefinite theta functions. More specifically, we decompose
\begin{equation}\label{eq:alpha_definition}
2p_\od^\eu(2n) =: 2p(n) + r_o(2n) + \alpha_0(n) \ \ \ \text{and} \ \ \ 2 p_\od^\eu(2n+1) =: r_o(2n+1) + \alpha_1(n)
\end{equation}
so that the generating functions of $\alpha_0$ and $\alpha_1$ are of the form (for $\t \in \mathbb{H}$ and $q:= e^{2\pi i \t}$)
\begin{align} \label{eq:alpha_generating_function}
\dfrac{u_0(\tau)}{\eta(\tau)} = \sum_{n \geq 0} \alpha_0(n) q^{n - \frac{1}{48}}, \ \ \ \dfrac{u_1(\tau)}{\eta(\tau)} = \sum_{n \geq 0} \alpha_1(n) q^{n + \frac{23}{48}},
\end{align}
where $\eta (\tau) := q^{\frac{1}{24}} \prod_{n \geq 1} (1-q^n)$ is the {\it Dedekind-eta function} and
\begin{align} \label{eq:u0_u1_definition}
u_0(\tau)
:= -\frac{q^\frac{1}{48}}{2}\left(\sigma\!\lp q^{\frac 12} \rp + \sigma\!\lp - q^{\frac 12} \rp\right),
\qquad
u_1(\tau)
:= \frac{q^\frac{1}{48}}{2}\left(\sigma\!\lp q^{\frac 12} \rp - \sigma\!\lp -q^{\frac 12} \rp\right).
\end{align}
Thanks to the exact formulas \eqref{E: Partition formula} and \eqref{E: r_o formula} for $p(n)$ and $r_o(n)$, respectively, to study the asymptotic behavior of $p_\od^\eu(n)$, it is then enough to restrict our attention to $\alpha_j(n)$ and investigate
\begin{align*}
\frac{u_0 (\t)}{\eta (\t)}
&=
q^{-\frac{1}{48}} \lp -1 + q^2 + q^3 + 4 q^4 + 4 q^5 + 9 q^6 + 11 q^7 + 19 q^8 + 23 q^9 + 37 q^{10} + 44 q^{11} + \ldots \rp,\\
\frac{u_1 (\t)}{\eta (\t)}
&=
q^{\frac{23}{48}} \lp 1 + 3 q + 5 q^2 + 9 q^3 + 14 q^4 + 22 q^5 + 31 q^6 + 48 q^7 +
 65 q^8 + 92 q^9 + 126 q^{10} + \ldots \rp .
\end{align*}
We relate $u_j$ to a vector-valued Maass form and use this connection to prove the following precise asymptotics that includes all of the exponentially growing contributions to $\alpha_j(n)$.

\begin{thm} \label{T: Main Theorem}
For $n\in\mathbb N$, $j \in \{0,1\}$, and with $\Delta_0 := - \frac{1}{48}$ and $\Delta_1 := \frac{23}{48}$, we have
\begin{multline*}
\a_j (n) = \frac{2}{\left(n+\DD_j\right)^{\frac{1}{4}}} \sum_{\ell=0}^2
\sum_{k=1}^{\left\lfloor \sqrt{n} \right\rfloor} \frac{1}{k}
\sum_{\substack{0 \leq h < k \\ \gcd (h,k) = 1}}
\psi_{h,k} (j,\ell)  e^{-\frac{2 \pi i}k \lp \frac{h'}{24} + \left(n+\DD_j\right) h \rp}
\\
\times \PV\int_0^{\frac{1}{24}} \Phi_{\ell, \frac{h'}{k}} (t)
\lp \frac{1}{24} - t \rp^{\frac{1}{4}}
I_{\frac{1}{2}} \!\lp \frac{4 \pi}{k} \sqrt{\left(n+\DD_j\right) \lp \frac{1}{24} - t \rp} \rp dt
+
O\!\lp n^{\frac{3}{4}}\rp,
\end{multline*}
where the multiplier $\psi_{h,k}(j,\ell)$ is defined in \eqref{eq:multiplier_system_definition}, $\Phi_{\ell,\frac{h'}{k}}$ is given in \eqref{eq:integral_kernel_definition} with $0 \leq h' < k$ chosen to satisfy $h h' \equiv -1 \pmod{ k}$, and $\PV \int$ denotes the Cauchy principal value integral.
\end{thm}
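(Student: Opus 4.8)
The plan is to implement a Circle Method argument adapted to the non-holomorphic completion of the false-indefinite theta functions $u_j$. The starting point is the relation between $u_j/\eta$ and a vector-valued Maass form established earlier in the paper: writing $\alpha_j(n)$ via Cauchy's theorem as a contour integral of $u_j(\tau)/\eta(\tau)$ over a horizontal segment just below the real line, I would dissect the segment into Farey arcs indexed by fractions $h/k$ with $k \le \lfloor\sqrt n\rfloor$. On each arc, apply the modular transformation law of the Maass form under the matrix sending $h/k$ to $i\infty$; this is where the multiplier $\psi_{h,k}(j,\ell)$ and the companion index $\ell$ (running over the three-dimensional space of components) enter, together with the transformation of $1/\eta$, which contributes the Dedekind sum phase $e^{-2\pi i h'/(24k)}$ and the classical $\eta$-multiplier packaged into $\psi_{h,k}$. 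The key structural input is that after transformation the relevant object is governed by an Eichler-type integral of Lewis--Zagier / Zwegers type, whose period-function kernel is exactly the $\Phi_{\ell, h'/k}(t)$ appearing in the statement; the $t$-integral over $(0,\tfrac{1}{24})$ with the principal value prescription is the image of the defining contour for that Eichler integral, pulled back to the real axis.

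The second stage is to extract the Bessel function. After the transformation, the contribution of each arc is (schematically) an integral of the form $\int \Phi_{\ell,h'/k}(t)\, e^{2\pi (n+\Delta_j)(\tfrac1{24}-t)/(k^2 z)} \cdots \, dz$ over the transformed arc variable $z$; completing the arc to the full vertical line (the classical Circle Method move, estimating the error from the missing piece) turns the $z$-integral into a Hankel-type contour integral, which evaluates to an $I$-Bessel function. Matching the parameters — the order $\tfrac12$, the argument $\tfrac{4\pi}{k}\sqrt{(n+\Delta_j)(\tfrac1{24}-t)}$, and the prefactor $(n+\Delta_j)^{-1/4}(\tfrac1{24}-t)^{1/4}$ — is a routine but careful computation using the standard integral representation $I_\nu(x) = \tfrac{1}{2\pi i}(\tfrac x2)^\nu \int e^{w - x^2/(4w)} w^{-\nu-1}\, dw$. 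The outer sum over $\ell \in \{0,1,2\}$ reflects the number of cusps / components of the vector-valued form whose non-holomorphic completion feeds into $u_j$.

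The main obstacle is controlling the error term $O(n^{3/4})$, and this splits into several genuinely delicate pieces. First, one must bound the contribution of the Farey arcs with $k > \lfloor \sqrt n\rfloor$; for honest modular forms of negative weight this is standard, but here the presence of the non-holomorphic Eichler integral — and in particular the slow decay and the branch-point behavior of the period kernel $\Phi_{\ell,h'/k}(t)$ near the endpoints $t=0$ and $t=\tfrac1{24}$ — requires careful estimates, presumably uniform in $h,k$, on the kernel and its derivatives. Second, the principal-value prescription means the endpoint $t$-integral is only conditionally convergent, so the interchange of summation and integration, and the completion-of-the-arc step, must be justified with the $\PV$ in place; this is likely the technical heart of the argument. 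Third, one must check that the "error" introduced by replacing $u_j/\eta$ on each arc by its transformed principal part (i.e. discarding the exponentially small tails of the $q$-expansion at the relevant cusp) is genuinely $O(n^{3/4})$ after summation over all arcs, which requires the Kloosterman-type cancellation implicit in the sums $\sum_{h}\psi_{h,k}(j,\ell) e^{\cdots}$ or at least a crude-but-sufficient bound on them. I expect the remaining steps — the Farey dissection bookkeeping, the Bessel evaluation, and the identification of $\psi_{h,k}$ and $\Phi_{\ell,h'/k}$ with the previously defined quantities — to be essentially mechanical given the modular input.
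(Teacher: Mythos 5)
Your overall strategy --- Farey dissection of order $\lfloor\sqrt n\rfloor$, modular transformation of the Maass-form companion of $u_j$ together with the $\eta$-multiplier, an Eichler/Mordell-type kernel $\Phi_{\ell,h'/k}$, and a Hankel-contour evaluation producing $I_{1/2}$ --- is the same as the paper's, and your parameter matching for the Bessel factor is right. But the proposal is a roadmap that defers exactly the steps the paper has to work hardest on, and it contains two misconceptions that would derail an attempt to fill them in. First, you attribute the principal-value prescription to ``endpoint'' behavior and to the conditional convergence of the $t$-integral. In fact the $\PV$ is there because $\Phi_{0,h'/k}(t)$ has a simple pole at $t=\tfrac1{48}$ in the interior of $[0,\tfrac1{24}]$ (coming from the Fourier coefficient $d_0(\tfrac1{48})$); the genuinely conditional object is the \emph{symmetric sum} over $n$ defining $\Phi_{\ell,h'/k}$ itself, which is a separate regularization handled by Lemmas \ref{lem:1_over_n_term_convergence}--\ref{lem:1_over_n_term_bound} via summation by parts against the lattice-point counts of Appendix \ref{app:false_indef_estimates}. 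Conflating these two would lead you to regularize the wrong thing. Second, there is no contribution from ``Farey arcs with $k>\lfloor\sqrt n\rfloor$'' to bound: on Rademacher's path every arc has denominator $k\le N$, and the error instead comes from (a) the non-principal parts and (b) the pieces $\mathcal C_1,\dots,\mathcal C_4$ of the deformed $Z$-contour.

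The central idea you do not identify is where the truncation of the $t$-integral at $\tfrac1{24}$ comes from and why the complement is harmless. The paper first converts the obstruction to modularity into the Mordell-type representation $\mathcal I_{\ell,h'/k}(\tau)=\frac{1}{\pi i}\sum^*_n d_\ell(n)e^{-2\pi i dn/c}\PV\int_0^\infty e^{2\pi i(\tau+d/c)t}(t-n)^{-1}dt$ (Theorem \ref{prop:uj_modular_mordell_representation}); after multiplying by the principal part $q^{-1/24}$ of $1/\eta$, the integral over $t\in(\tfrac1{24},\infty)$ is the non-principal piece $\mathcal I^e$, and the key quantitative input is the uniform bound $\mathcal I^e_{\ell,h'/k,1/24}\ll k$ of Lemma \ref{lem:calI_bounds}, valid on the whole arc where $\IRe(1/Z)\ge1$. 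This bound, together with the elementary estimates $d_\ell(n)\ll\sqrt{|n|}$ and $\sum^*d_\ell(n)e^{2\pi ih'n/k}/n\ll k$, is what makes the total error $O(n^{3/4})$ with only trivial bounds on the multipliers --- no Kloosterman cancellation is used or needed. Without this decomposition and these uniform-in-$h'$ bounds, the ``discard the exponentially small tails'' step you describe cannot be executed, because the transformed object is not a $q$-series with a finite principal part but a continuum superposition governed by $\Phi_{\ell,h'/k}$.
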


\begin{remarks}
\mbox{ }
\begin{enumerate}[leftmargin=*,label=\rm(\arabic*)]
\item There is a third component $u_2$ which is needed to build the full $\mathrm{SL}_2\lp \Z \rp$-orbit of the associated Maass form. However, the false-indefinite theta function $u_2$ does not appear to bear any direct connection to $p_\od^\eu$. It would be interesting to discover a combinatorial interpretation for the analogous object involving $u_2$. A version of this formula also holds for this object; indeed, all three $u_j$ are treated symmetrically in the proof of Theorem \ref{T: Main Theorem} (with $\DD_2 := \frac{11}{12}$).
\item The error term could in principle be made smaller. However, since $u_j$ (almost) transforms with weight 1 and $\eta$ has weight $\frac{1}{2}$, the overall weight is $\frac{1}{2}$ and exact formulas from the Circle Method are not expected. We note that for smaller weights, our methods indeed give exact formulas.
\item Although our results are stated for the specific false-indefinite theta function and the related Maass form under study, the same calculations apply to any similar setup.
\end{enumerate}
\end{remarks}

\noindent We can also extract explicit expressions for the leading exponential term in Theorem \ref{T: Main Theorem} using the local behavior of the integral kernel $\Phi_{\ell,\frac{h'}{k}}$ at $t=0$.

\begin{cor}\label{cor:leading_exponential_first_few_example}
For $n\in\mathbb N$ we have
\begin{align*}
\a_0 (n) &= \frac{e^{4 \pi \sqrt{\frac{n+\DD_0}{24}}}}{(n+\DD_0)^{\frac{3}{2}}}
\lp \frac{\pi}{6 \sqrt{2}} + \frac{71\pi^2 -432}{576 \sqrt{3}} (n+\DD_0)^{-\frac{1}{2}}
+ O\left(n^{-1}\right) \rp,
\\  \notag
\a_1 (n) &=
\frac{e^{4 \pi \sqrt{\frac{n+\DD_1}{24}}}}{n+\DD_1}
\lp \frac{1}{2 \sqrt{3}} +
\frac{23 \pi^2 -144}{288 \sqrt{2}   \pi} (n+\DD_1)^{-\frac{1}{2}}
+ \frac{9745 \pi^2 -19872}{55296 \sqrt{3}} (n+\DD_1)^{-1}
+ O\left(n^{-\frac{3}{2}}\right) \rp,
\\ \notag
\a_2 (n) &=
\frac{e^{4 \pi \sqrt{\frac{n+\DD_2}{24}}}}{n+\DD_2}
\lp \frac{1}{2 \sqrt{3}} +
\frac{25 \pi^2 -72}{144 \sqrt{2}   \pi} (n+\DD_2)^{-\frac{1}{2}}
+ \frac{2929 \pi^2 -10800}{13824 \sqrt{3}} (n+\DD_2)^{-1}
+ O\left(n^{-\frac{3}{2}}\right) \rp .
\end{align*}
\end{cor}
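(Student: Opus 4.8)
The plan is to extract the leading exponential asymptotics directly from Theorem \ref{T: Main Theorem} by isolating the $k=1$ term of the Kloosterman-type sum and analyzing the principal value integral via a saddle-point/Laplace-type argument near the endpoint $t=0$. Since the argument of the $I$-Bessel function is maximized at $t=0$ (where $\frac{1}{24}-t$ is largest), only a neighborhood of $t=0$ contributes to the exponential growth; all $k\geq 2$ terms are exponentially smaller by a factor $e^{4\pi\sqrt{(n+\Delta_j)/24}(1-1/k)}$ relative to the main term and may be absorbed into the error. Thus the first step is to write, for each $j$,
\begin{align*}
\a_j(n) = \frac{2}{(n+\DD_j)^{\frac14}} \sum_{\ell=0}^2 \psi_{0,1}(j,\ell)\, \PV\!\int_0^{\frac{1}{24}} \Phi_{\ell,0}(t)\lp\tfrac{1}{24}-t\rp^{\frac14} I_{\frac12}\!\lp 4\pi\sqrt{(n+\DD_j)\lp\tfrac{1}{24}-t\rp}\rp dt + (\text{smaller}),
\end{align*}
where $h=h'=0$ in the $k=1$ term.

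The second step is the local analysis of the integral kernel $\Phi_{\ell,0}$ at $t=0$. I would substitute the known expansion of $\Phi_{\ell,0}(t)$ as $t\to 0^+$ (this is precisely what the sentence preceding the corollary refers to — the ``local behavior of the integral kernel at $t=0$''), which will take the form of a power series in $t$ possibly multiplied by $t^{a_\ell}$ for some exponent $a_\ell$ depending on the component, together with the asymptotic $I_{\frac12}(x)=\sqrt{\frac{2}{\pi x}}\cosh(x)\sim\frac{e^x}{\sqrt{2\pi x}}$. Writing $x = 4\pi\sqrt{(n+\DD_j)(\frac{1}{24}-t)}$ and expanding the exponent around $t=0$ as $4\pi\sqrt{\frac{n+\DD_j}{24}}\lp 1 - 12t - 72 t^2 - \cdots\rp$, the integral becomes a Laplace-type integral with large parameter $\sqrt{n+\DD_j}$; Watson's lemma then produces an asymptotic series in powers of $(n+\DD_j)^{-\frac12}$ whose leading term is governed by the value (or leading coefficient) of $\Phi_{\ell,0}$ at $t=0$, the next by its derivative, and so on. The principal value prescription matters only if $\Phi_{\ell,0}$ has a singularity inside $(0,\tfrac{1}{24})$; near $t=0$ the relevant pieces are integrable, so one can work with an honest Laplace integral on a small interval $[0,\delta]$ and bound the remainder.

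The third step is bookkeeping: collect the contributions of the three components $\ell=0,1,2$ with their multipliers $\psi_{0,1}(j,\ell)$, combine the resulting series, and simplify. Here one must keep enough terms in Watson's lemma to reach the stated orders — two correction terms for $\a_0$ and three for $\a_1,\a_2$ — and carefully track the prefactor powers of $(n+\DD_j)$ coming from $\frac{2}{(n+\DD_j)^{1/4}}$, from $(\frac{1}{24}-t)^{1/4}\approx 24^{-1/4}$, and from the $x^{-1/2}$ in the Bessel asymptotic, which together with the Laplace normalization $\int_0^\infty e^{-cs\sqrt{n}}s^{b}\,ds \asymp (n^{1/2})^{-b-1}$ yield the overall $(n+\DD_j)^{-3/2}$ for $\a_0$ and $(n+\DD_j)^{-1}$ for $\a_1,\a_2$; the difference in these overall powers reflects the different leading exponents $a_\ell$ in the local expansion of $\Phi_{\ell,0}$, which is ultimately what distinguishes the three cases.

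The main obstacle I anticipate is obtaining the precise local expansion of $\Phi_{\ell,0}(t)$ at $t=0$ to sufficiently many orders, including the exact rational and $\pi$-dependent coefficients — this requires unwinding the definition \eqref{eq:integral_kernel_definition} of $\Phi_{\ell,\frac{h'}{k}}$ and the Maass-form data feeding into it, and the appearance of $\pi^2$ in the numerators (e.g. $71\pi^2-432$, $23\pi^2-144$) signals that error-function or incomplete-Gamma type special-function expansions, not just elementary Taylor series, enter the kernel. Once that expansion is in hand, the remaining Laplace-type asymptotics and the summation over $\ell$ are routine, if tedious.
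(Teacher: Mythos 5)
Your proposal follows essentially the same route as the paper: Lemma \ref{lem:leading_exponential_k1_t0} isolates the $k=1$, $t\in[0,\frac{1}{96}]$ portion of the formula from Theorem \ref{T: Main Theorem} (using $I_{\frac12}(x)=\sqrt{\frac{2}{\pi x}}\sinh(x)$, not $\cosh$ as you wrote, though the leading asymptotic is the same), and Proposition \ref{prop:alpha_n_main_exonential_term} converts the remaining integral into an exact Laplace integral via the substitution $u=\frac{1-\sqrt{1-24t}}{12}$ and applies Taylor expansion with remainder, i.e.\ Watson's lemma, exactly as you outline. Your handling of the principal value (the only pole in $[0,\frac{1}{24}]$ sits at $t=\frac{1}{48}$, away from $t=0$, and its contribution is subexponential) also matches the paper's treatment of the terms $J_4$, $J_5$, $J_6$.

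Two points to flag. First, the step you defer as the ``main obstacle'' --- the explicit Taylor coefficients $\Phi^{(r)}_{\ell,0}(0)$ --- is where most of the content of the corollary actually lives: the paper devotes Appendix \ref{app:taylor_integral_kernel} to it, computing $\sum_{\ell}\Psi_S(j,\ell)\Phi^{(r)}_{\ell,0}(0)$ by matching the Mordell-type expansion of $\mathcal{I}_{\ell,0}$ against a two-dimensional Euler--Maclaurin expansion of $u_j$ itself. No error-function or incomplete-Gamma expansions enter; the coefficients are rational multiples of powers of $\pi$ (Table \ref{tab:kernel_taylor_coefficients}), and mixed expressions such as $71\pi^2-432$ arise from combining different orders through the Jacobian factor $(1-12u)$ and the composition $\Phi_{\ell,0}(u(1-6u))$. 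Without this computation the stated constants cannot be produced, so as written the proposal does not yet prove the corollary. Second, your explanation of why $\a_0$ carries the extra factor $(n+\DD_0)^{-\frac12}$ relative to $\a_1,\a_2$ --- ``different leading exponents $a_\ell$'' in a putative local expansion $t^{a_\ell}$ of the kernel --- is wrong in mechanism: each $\Phi_{\ell,0}$ is smooth at $t=0$ (its poles lie in $\Z+\b_\ell$, which avoids $0$), and the drop in order for $j=0$ comes instead from the cancellation $\sum_{\ell=0}^{2}\Psi_S(0,\ell)\Phi_{\ell,0}(0)=0$ in the weighted sum over components, i.e.\ the $r=0$ Watson coefficient vanishes for $j=0$ but equals $4\pi^2$ for $j=1,2$.
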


The paper is organized as follows. In Section \ref{sec:prelims}, we review facts on Maass forms, mock Maass theta functions, and false-indefinite theta functions. In Section \ref{sec:parity_partition_false_indefinite}, we study the false-indefinite theta functions $u_0$ and $u_1$ along with their natural companion $u_2$ and give the modular transformations of the associated Maass form.
In Section \ref{sec:obstruction_modularity}, we describe the obstruction to modularity as a Mordell-type integral, which allows one to discern the principal parts near each rational. We use this representation in Section \ref{sec:bounds_nonprincipal_parts} to bound the nonprincipal parts in preparation for
Section \ref{sec:circle_method}, where we apply the Circle Method and prove Theorem \ref{T: Main Theorem}.
In Section \ref{sec:leading_exponential}, we employ Theorem \ref{T: Main Theorem} to show
Corollary \ref{cor:leading_exponential_first_few_example} for the leading exponential term
with its general form given in
Proposition \ref{prop:alpha_n_main_exonential_term}.
In Section \ref{sec:conclusion}, we conclude with final remarks and point out potential future directions.
In Appendix \ref{app:false_indef_estimates}, we give some elementary estimates on the Fourier coefficients of false-indefinite theta functions. Finally, in Appendix \ref{app:taylor_integral_kernel}, we give the Taylor coefficients of the integral kernel at $t=0$.

\section*{Acknowledgements}

The authors thank Walter Bridges for his comments and feedback.
The first and the second author have received funding from the European Research Council (ERC) under the European Union's Horizon 2020 research and innovation programme (grant agreement No. $\hspace{-0.09cm}$101001179). The first and the third author were supported by the SFB/TRR 191 “Symplectic Structure in Geometry, Algebra and Dynamics”, funded by the DFG (Projektnummer 281071066 TRR 191). The views expressed in this article are those of the authors and do not reflect the official policy or position of the U.S. Naval Academy, Department of the Navy, the Department of Defense, or the U.S. Government.

\section{Preliminaries} \label{sec:prelims}
We start our discussion with background on Maass forms, mock Maass theta functions, and their relations to false-indefinite theta functions; see \cite{BN}, where further details can be found.

\subsection{Maass forms}\label{sec:maass_form}
We start by defining Maass forms, whose properties can be found e.g.~in \cite{MR}.
\begin{defn}
A collection $U_0, \dots, U_{N-1}$ of smooth functions $U_j : \H \to \C$ are said to form a {\it vector-valued Maass form} for $\mathrm{SL}_2\lp \Z \rp$ if they satisfy the following properties (with\footnote{
	More generally for any $w \in \IC$, we write $w_1$ for its real part and $w_2$ for its imaginary part.
} $\tau = \tau_1 + i \tau_2 \in \H$):
\begin{enumerate}[leftmargin=*]
\item For $M = \begin{psmallmatrix} a & b \\ c & d \end{psmallmatrix} \in \mathrm{SL}_2\!\lp \Z \rp$
and $\Psi_M$ a suitable multiplier system we have
\begin{align*}
U_j\!\lp \dfrac{a\tau + b}{c\tau + d} \rp = \sum_{\ell=0}^{N-1} \Psi_M\!\lp j,\ell \rp U_\ell(\tau).
\end{align*}
\item There exists a constant $\lambda := \frac 14 - \nu^2 \in \C$ such that $\Delta(U_j)=\lambda U_j$ for each $j$, where
\begin{align*}
\Delta := - \tau_2^2\lp \dfrac{\partial^2}{\partial \tau_1^2} + \dfrac{\partial^2}{\partial \tau_2^2} \rp
\end{align*}
is the {\it hyperbolic Laplace operator}.
\item The functions $U_j$ grow at most polynomially towards the cusps.
\end{enumerate}
\end{defn}

From these assumptions it immediately follows that $U_j$ have Fourier expansions
\begin{align*}
U_j(\tau) = \sum_{n \in \Z + \beta_j} a_j\!\lp \tau_2; n \rp e^{2\pi i n \tau_1},
\end{align*}
where $a_j(\tau_2;n)$ has the form
\begin{align*}
a_j(\tau_2;n) = \begin{cases}
d_j(n) \sqrt{\tau_2} K_\nu\!\lp 2\pi |n| \tau_2 \rp & \text{if } n \not = 0, \\
b_j \log(\tau_2) \sqrt{\tau_2} + c_j \sqrt{\tau_2} & \text{if } n = 0 \text{ and } \nu = 0, \\
b_j \tau_2^{\frac 12 - \nu} + c_j \tau_2^{\nu+\frac 12} & \text{if } n = 0 \text{ and } \nu \not = 0,
\end{cases}
\end{align*}
and the coefficients $d_j(n)$ are polynomially bounded in $n$.
Here $K_\nu$ denotes the $K$-Bessel function of order $\nu$.
In this paper, we are only interested in Maass forms on $\mathrm{SL}_2\lp \Z \rp$ with Laplacian eigenvalue $\frac 14$ and $b_j = 0$, so we have Fourier expansions
\begin{align} \label{eq:Uj_Fourier_decomposition}
U_j(\tau) = c_j \sqrt{\tau_2} + \sqrt{\tau_2} \sum_{\substack{n \in \Z + \beta_j \\ n \not = 0}} d_j(n) K_0\!\lp 2\pi |n| \tau_2 \rp e^{2\pi i n \tau_1}.
\end{align}

Following Lewis and Zagier \cite{LZ}, we define the differential one-form
\begin{equation}\label{eq:lewis_zagier_one_form}
\left[ U_j(z), R_\tau(z) \right] := \left(\frac{\del}{\del z} U_j(z) \right)R_\tau(z) dz
+ U_j(z) \left(\frac{\del}{\del \bar{z}} R_\tau(z)\right) d\bar{z},
\end{equation}
where\footnote{Note that throughout we define square-roots using the principal branch of the logarithm.}
\begin{align*}
R_\tau(z) := \dfrac{\sqrt{z_2}}{\sqrt{(z-\tau)(\bar z - \tau)}} .
\end{align*}
This one-form is closed since $U_j$ and $R_\tau$ have Laplacian eigenvalue $\frac{1}{4}$. One can then use this one-form to relate $U_j$ to a family of $q$-series using an Eichler-type integral. More specifically, one can define the holomorphic functions $u_j : \H \to \C$ through
(following \cite{LZ} as well as \cite[Proposition 3.5]{LNR} for more details and including the constant term)
\begin{equation*}
u_j(\tau) := - \dfrac{2}{\pi} \int_\tau^{i\infty} \left[ U_j(z), R_\tau(z) \right] = - \dfrac{c_j}{\pi} + \sum_{\substack{n \in \Z + \beta_j \\ n > 0}} d_j(n) q^n .
\end{equation*}

Crucially $R_\tau (z)$ is also modular covariant. It has weight $(1,0)$ in $(\tau,z)$ including a sign factor appearing in its transformation to keep track of the branch of $R_\tau (z)$ that is exchanged by modular transformations. Together with the modularity of the Maass form $U_j$, this leads to the holomorphic quantum modularity of $u_j$.
More specifically, for $M = \begin{psmallmatrix} a & b \\ c & d \end{psmallmatrix} \in \mathrm{SL}_2\!\lp \Z \rp$  and $\t \in \mathbb{H}$ with $\t_1 \neq -\frac{d}{c}$
we have (see e.g. Remark 2.4 and Proposition 2.5 of \cite{BN} for more details)
\begin{equation*}
u_j\!\lp \dfrac{a\tau + b}{c\tau + d} \rp =
\sgn (c \tau_1 +d)
(c \tau +d)
\sum_{\ell=0}^{N-1} \Psi_M\!\lp j,\ell \rp  \lp u_\ell(\tau) + \mathcal{U}_{\ell,-\frac{d}{c}}(\tau) \rp,
\end{equation*}
where for $j\in\{0,1,\dots,N-1\}$ and $ \varrho \in \Q$ the obstruction to modularity is defined as
\begin{equation}\label{eq:obstruction_modularity}
\mathcal{U}_{j,\varrho}(\tau) := \dfrac{2}{\pi} \int_\varrho^{i\infty} \left[ U_j(z), R_\tau(z) \right]
\end{equation}
with a vertical integration path avoiding the branch cut from $\tau$ to $\bar{\tau}$.

\begin{rem}\label{rem:extension_obstruction}
We can analytically continue $\mathcal{U}_{j,\varrho}$ defined by  \eqref{eq:obstruction_modularity} for $\t_1 > \varrho$ to a holomorphic function
$\mathcal{U}_{j,\varrho}^\#$ on the cut plane $\IC \setminus (- \infty, \varrho]$ by deforming the path of integration and continuing to the second branch of square-root. This yields to the following transformation for $c >0$
\begin{equation*}
u_j\!\lp \dfrac{a\tau + b}{c\tau + d} \rp =
(c \tau +d)
\sum_{\ell=0}^{N-1} \Psi_M\!\lp j,\ell \rp  \lp u_\ell(\tau) + \mathcal{U}_{\ell,-\frac{d}{c}}^\#(\tau) \rp .
\end{equation*}
\end{rem}

\subsection{Mock Maass and false-indefinite theta functions} \label{sec:mock_maass}
We next discuss mock Maass theta functions as defined by Zwegers \cite{Zw}.
These are certain theta functions constructed out of indefinite binary quadratic forms that are non-modular eigenfunctions of the hyperbolic Laplacian with a controllable modular completion that  breaks the Laplacian eigenfunction property.
This set-up is quite analogous to Zwegers' indefinite theta functions and their non-holomorphic modular completions that lead to the theory of mock modular forms developed around mock theta functions and harmonic Maass forms \cite{BFOR,BF,ZwThesis}.
Mock Maass theta functions provide a natural framework to understand and generalize work of Cohen \cite{C} on Ramanujan's $\sigma$-function. This is done by recognizing the mock Maass theta function associated with the $\s$-function as one of the more symmetric cases where the difference between the mock Maass theta function and the modular completion vanishes. Consequently, the resulting mock Maass theta function is a Maass form that is both modular and a Laplacian eigenfunction.
To be more concrete, we consider an even, signature $(1,1)$ binary quadratic form\footnote{Throughout we write vectors in bold letters.} $Q : \IZ^2 \to \IZ$ with $Q(\bm{n}) = \frac{1}{2} \bm{n}^T A \bm{n}$, where $A$ is an integral symmetric matrix with even diagonal entries. We also introduce the corresponding bilinear form $B(\bm{n}, \bm{m}) := \bm{n}^T A \bm{m}$ and extend $Q, B$ to $\IR^2$.
The set of $\bm{c} \in \R^2$ with $Q(\bm{c}) = -1$ breaks into two connected components, which is also so for the set of $\bm{c} \in \R^2$ with $Q(\bm{c}) = 1$.
So if we pick two vectors $\bm{c_0}, \bm{c_0^\perp} \in \IR^2$ with $Q(\bm{c_0}) = -1$, $Q(\bm{c_0^\perp}) = 1$, and $B(\bm{c_0},\bm{c_0^\perp}) = 0$, the connected components $C_Q$ and $C_Q^\perp$ to which $\bm{c_0}$ and $\bm{c_0^\perp}$ belong to, respectively, are characterized as
\begin{equation*}
C_Q = \left\{ \bm{c} \in \R^2 : Q(\bm{c}) = -1 \text{ and } B(\bm{c}, \bm{c_0}) < 0 \right\},\quad
C_Q^\perp = \left\{ \bm{c} \in \R^2 : Q(\bm{c}) = 1 \text{ and } B\left(\bm{c}, \bm{c_0^\perp}\right) > 0 \right\}.
\end{equation*}
Finally, to parametrize the sets $C_Q$ and $C_Q^\perp$ we select a base change matrix $P \in \mathrm{GL}_2 (\mathbb{R})$ such that\footnote{
For convenience we use the reference quadratic form $x_1^2 - x_2^2$ as in \cite{BN}. See the comments there on how this relates to \cite{Zw} and why the final condition involving $C_Q^\perp$ should be added to the first two conditions imposed by \cite{Zw}.
}
\begin{equation}\label{eq:false_indef_P_choice}
A = P^T \mat{2 & 0 \\ 0 & -2} P, \qquad
P^{-1} \mat{0 \\ 1} \in C_Q, \qquad
P^{-1} \mat{1 \\ 0} \in C_Q^\perp
\end{equation}
and then parametrize $C_Q$ and $C_Q^\perp$ with $t \in \mathbb{R}$ by defining
\begin{equation}\label{eq:ct_definition}
\bm{c(t)} := P^{-1} \mat{\sinh (t) \\ \cosh (t)} \in C_Q
\quad \mbox{ and } \quad
\bm{c^\perp (t)} := P^{-1} \mat{\cosh (t) \\ \sinh (t)} \in C_Q^\perp.
\end{equation}
Using this setup, we recall Zwegers' definition of a mock Maass theta function (also see the comments in \cite{BN} on the constant term).
\begin{defn}\label{defn:mock_maass_theta_fnc}
For $j \in \{1,2\}$ let $\bm{c_j} = \bm{c (t_j)}\in C_Q$ and let $\bm{c_j^\perp} := \bm{c^\perp (t_j)}$.
For any $\bm{\mu}\in A^{-1}\mathbb{Z}^2/\mathbb{Z}^2$ we define the {\it mock Maass theta function} $\mT_{\bm\mu}$ by
\begin{align*}
\mT_{\bm\mu}(\t) &:=
\sgn(t_2-t_1) \frac{\sqrt{\t_2}}{2} \sum_{\substack{\bm n \in \Z^2 + \bm\mu \\ \bm{n} \neq \bm{0}}}
\Big( 1- \sgn(B(\bm n,\bm{c_1})) \sgn (B (\bm n,\bm{c_2})) \Big)
K_0(2\pi Q(\bm n)\t_2) e^{2\pi i Q(\bm n)\t_1}
\\
&\!\! +
\sgn(t_2-t_1) \frac{\sqrt{\t_2}}{2} \sum_{\substack{\bm n \in \Z^2 + \bm\mu \\ \bm{n} \neq \bm{0}}}
\left( 1- \sgn\!\left(B\!\left(\bm n,\bm{c_1^\perp}\right)\right) \sgn \!\left(B \!\left(\bm n,\bm{c_2^\perp}\right)\right) \right)
K_0\left(-2\pi Q(\bm n)\t_2\right) e^{2\pi i Q(\bm n)\t_1} \\
&\!\!
+ (t_2-t_1) \sqrt{\t_2} \d_{\bm{\mu} \in \Z^2}.
\end{align*}
\end{defn}

The differential equations satisfied by $K_0$ imply immediately that $\mT_{\bm\mu}$ is an eigenfunction of $\Delta$ with eigenvalue $\frac 14$. Then, as described in Subsection \ref{sec:maass_form}, we can use the closed one-form defined in \eqref{eq:lewis_zagier_one_form} to relate the mock Maass theta function $\mT_{\bm\mu}$ to the false-indefinite theta function
\begin{align*}
\vartheta_{\bm\mu}(\tau) := \frac{t_1-t_2}{\pi}\delta_{\bm\mu\in\Z^2} + \dfrac{\sgn(t_2-t_1) }{2} \sum_{\substack{\nn\in\Z^2+\bm\mu \\ \nn\not={\bf 0}}} \lp 1 - \sgn\lp B(\nn,\bm{c_1}) \rp \sgn\lp B(\nn,\bm{c_2}) \rp \rp q^{Q(\nn)}
\end{align*}
through the Eichler-type integral
\begin{align}\label{eq:eichler_type_integral}
\vartheta_{\bm\mu}(\tau) = - \dfrac{2}{\pi} \int_\tau^{i\infty} \left[ \mT_{\bm\mu}(z), R_\tau(z) \right].
\end{align}

In general, mock Maass theta functions are not modular objects. Remarkably, in \cite{Zw} Zwegers
showed that there is a {modular completion} for the mock Maass theta function $\mT_{\bm\mu}$ given by
\begin{align*}
\widehat\mT_{\bm\mu}(\tau) := \sqrt{\tau_2} \sum_{\nn \in \Z^2 + \bm\mu} q^{Q(\nn)} \int_{t_1}^{t_2} e^{-\pi B\lp \nn, \bm{c}(\bm{t}) \rp^2 \tau_2} dt.
\end{align*}
According to Theorem 2.6 and Lemma 4.1 of \cite{Zw}, this completion modifies $\mT_{\bm\mu}$ as
\begin{equation} \label{Mock Maass Error to Modularity}
\widehat\mT_{\bm\mu} = \mT_{\bm\mu}
+ \varphi_{\bm\mu}^{[\bm{c_1}]}  - \varphi_{\bm\mu}^{[\bm{c_2}]},
\end{equation}
where the \textit{shadow contributions} $\varphi_{\bm\mu}^{[\bm{c_j}]}$ are given by
\begin{align} \label{eq:shadow definition}
\varphi_{\bm\mu}^{[\bm{c_j}]}\lp \tau \rp &:= \sqrt{\tau_2} \sum_{\nn\in \Z^2 + \bm\mu} \alpha_{t_j}\!\lp \nn \sqrt{\tau_2} \rp q^{Q(\nn)},\\
\intertext{and}
\nonumber\alpha_{t_j}\!\lp \bm{x} \rp &:= \begin{cases}
\int_{t_j}^\infty e^{-\pi B\lp \bm{x}, \bm{c}(t) \rp^2} dt & \text{if }
B\!\left(\bm{x}, \bm{c_j}\right) B\!\left(\bm{x},\bm{c_j^\perp}\right) > 0, \vspace{.05cm}\\
- \int_{-\infty}^{t_j} e^{-\pi B\lp \bm{x}, \bm{c}(t) \rp^2} dt & \text{if }
B\!\left(\bm{x}, \bm{c_j}\right) B\!\left(\bm{x},\bm{c_j^\perp}\right) < 0, \vspace{.05cm}\\
0 & \text{if } B\!\left(\bm{x}, \bm{c_j}\right) B\!\left(\bm{x},\bm{c_j^\perp}\right) = 0.
\end{cases}
\end{align}
Here we assume that $Q(\bm{n}) = 0$ has no solutions on $\mathbb{Z}^2 + \bm{\mu}$ except for $\bm{n} = \bm{0}$.
Due to these additions, the completion does not in general maintain the eigenvalue property possessed by $\mT_{\bm\mu}$, but in contrast it automatically satisfies a modular transformation law. In particular, we have \begin{equation}\label{eq:mock_maass_theta_transformation}
\widehat{\mT}_{\bm\mu} \lp \frac{a \t + b}{c \t + d} \rp
=
\sum_{\bm{\nu} \in A^{-1} \Z^2/ \Z^2}
\psi_{M,Q} (\bm{\mu}, \bm{\nu}) \widehat{\mT}_{\bm\nu} (\t)
\quad \mbox{for } \mat{a & b \\ c & d} \in \SL_2 (\Z),
\end{equation}
where
\begin{equation}\label{eq:theta_multiplier_system}
\psi_{M,Q} (\bm{\mu}, \bm{\nu})
:=
\begin{cases}
e^{2\pi i a b Q(\bm{\mu})} \d_{\bm{\mu}, \sgn (d) \bm{\nu}}
\qquad &\mbox{if } c = 0, \vspace{0.3cm} \\
\frac{1}{|c| \sqrt{|\det (A)|}} \displaystyle\sum_{\bm{m} \in \IZ^2 / c \IZ^2}
e^{\frac{2\pi i}{c} \lp a Q(\bm{m}+\bm{\mu}) - B(\bm{m}+\bm{\mu},\bm{\nu}) + d Q(\bm{\nu})  \rp}
\qquad &\mbox{if } c \neq 0,
\end{cases}
\end{equation}
where $\delta_{\bm{\mu}, \bm{\nu}}:=1$ if $\bm{\mu} = \bm{\nu}$ in $A^{-1} \Z^2/ \Z^2$ and 0 otherwise. We also require the following lemma from \cite{Zw} to determine whether a linear combination of false-indefinite theta functions is a Maass form. 

\begin{lemma} \label{L:Zwegers Lemma}
Let $Q$ be the even, signature $(1,1)$ quadratic form $Q(\bm{n}) = \frac{1}{2} \bm{n}^T A \bm{n}$ and $\varphi_{\bm{\mu}}^{[\bm{c}]}$ be as in \eqref{eq:shadow definition} for $\bm{\mu} \in \Q^2$ and $\bm{c} \in C_Q$. If $\gamma \in \mathrm{SL}_2\lp \Z \rp$ with $\gamma^T A \gamma = A$ and $\gamma C_Q = C_Q$, then
\begin{align*}
\varphi_{\gamma \bm{\mu}}^{[\gamma \bm{c}]} = \varphi_{\bm{\mu}}^{[\bm{c}]}.
\end{align*}
\end{lemma}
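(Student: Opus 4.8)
The plan is to work directly from the defining formula \eqref{eq:shadow definition} for the shadow contribution and track how the hypotheses $\gamma^T A \gamma = A$ and $\gamma C_Q = C_Q$ make each ingredient transform. First I would unwind what $\varphi_{\gamma\bm\mu}^{[\gamma\bm c]}$ means: since $\gamma\bm c\in C_Q$ (because $\gamma C_Q = C_Q$), there is a unique $t'\in\IR$ with $\gamma\bm c = \bm c(t')$, and the shadow is
\[
\varphi_{\gamma\bm\mu}^{[\gamma\bm c]}(\tau) = \sqrt{\tau_2}\sum_{\nn\in\Z^2+\gamma\bm\mu}\alpha_{t'}\!\lp\nn\sqrt{\tau_2}\rp q^{Q(\nn)}.
\]
The key step is to re-index the sum by $\nn = \gamma\mm$ with $\mm\in\Z^2+\bm\mu$; this is a bijection because $\gamma\in\SL_2(\Z)$ preserves $\Z^2$ and sends the coset $\Z^2+\bm\mu$ to $\Z^2+\gamma\bm\mu$. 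Under this substitution $Q(\gamma\mm) = Q(\mm)$ by the invariance $\gamma^T A\gamma = A$, so the $q$-powers match up. It then remains to show $\alpha_{t'}(\gamma\mm\sqrt{\tau_2}) = \alpha_{t_{\bm c}}(\mm\sqrt{\tau_2})$, where $t_{\bm c}$ is the parameter with $\bm c = \bm c(t_{\bm c})$.

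For that I would use the fact that the orthogonal transformation $\gamma$ (orthogonal for $B$, by $\gamma^T A\gamma = A$) carries the one-parameter family $\bm c(t)$ to itself up to a reparametrization. Concretely, $\gamma\bm c(t)\in C_Q$ for all $t$, so there is a map $t\mapsto\phi(t)$ with $\gamma\bm c(t) = \bm c(\phi(t))$, and $\phi$ is a continuous bijection of $\IR$; since $\bm c(t) = P^{-1}(\sinh t,\cosh t)^T$ parametrizes a branch of a hyperbola monotonically, $\phi$ must be an increasing affine map $\phi(t) = t + s$ for the constant shift $s$ determined by $\gamma$ (using that $\gamma$ preserves orientation on $C_Q$; the case-splitting here, or ruling out $\phi(t) = -t+s$, is the one genuinely delicate point). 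In particular $t' = \phi(t_{\bm c}) = t_{\bm c}+s$. Now in the integral defining $\alpha_{t'}$ one substitutes $t\mapsto\phi(t) = t+s$ in $\int_{t'}^\infty e^{-\pi B(\gamma\mm\sqrt{\tau_2},\bm c(t))^2}dt$: using $B(\gamma\mm,\bm c(t+s)) = B(\gamma\mm,\gamma\bm c(t)) = B(\mm,\bm c(t))$ (again invariance of $B$ under $\gamma$), the integrand becomes $e^{-\pi B(\mm\sqrt{\tau_2},\bm c(t))^2}$ and the lower limit becomes $t_{\bm c}$, so the first branch of the definition of $\alpha$ is mapped to the first branch; and the conditions $B(\gamma\mm,\gamma\bm c)\,B(\gamma\mm,(\gamma\bm c)^\perp) > 0$ versus $B(\mm,\bm c)\,B(\mm,\bm c^\perp) > 0$ agree because $\gamma$ also fixes $C_Q^\perp$ (one checks $\gamma\bm c^\perp$ lies on $C_Q^\perp$ since $\gamma$ preserves $Q$ and the sign of $B(\cdot,\bm c_0^\perp)$, the orientation on $C_Q^\perp$ being forced once it is fixed on $C_Q$). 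The sign-zero and sign-negative branches are handled identically. Hence $\alpha_{t'}(\gamma\mm\sqrt{\tau_2}) = \alpha_{t_{\bm c}}(\mm\sqrt{\tau_2})$, and assembling the re-indexed sum gives $\varphi_{\gamma\bm\mu}^{[\gamma\bm c]} = \varphi_{\bm\mu}^{[\bm c]}$.

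The main obstacle I anticipate is the orientation bookkeeping: showing that $\gamma$ acts on the parametrization $\bm c(\cdot)$ of $C_Q$ as a pure translation $t\mapsto t+s$ rather than an orientation-reversing map, and simultaneously that it fixes $C_Q^\perp$ (not swapping it with the other component of $Q = 1$), so that the three-way case split in \eqref{eq:shadow definition} is preserved term by term. This is exactly where the hypothesis $\gamma C_Q = C_Q$ (as opposed to merely $\gamma$ permuting the two components of $\{Q = -1\}$) does its work, together with the sign conventions built into \eqref{eq:false_indef_P_choice}; once that is pinned down, the rest is a change of variables. I would present this orientation argument as a short preliminary observation and then carry out the substitution cleanly.
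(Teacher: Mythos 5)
The paper does not actually prove this lemma: it is imported verbatim from Zwegers \cite{Zw} with no argument given, so there is no in-paper proof to compare against. Your proposal is the natural (and, as far as I can tell, essentially Zwegers' own) argument, and it is correct: re-index the lattice sum by $\bm{n}=\gamma\bm{m}$, use $Q\circ\gamma=Q$ for the $q$-powers and $B(\gamma\,\cdot,\gamma\,\cdot)=B(\cdot,\cdot)$ for the Gaussians and the sign conditions, and absorb $\gamma$ into a shift of the parameter $t$. The one point you flag but leave open --- why $\gamma$ acts on the parametrization \eqref{eq:ct_definition} as $t\mapsto t+s$ rather than $t\mapsto -t+s$, and why it preserves $C_Q^\perp$ rather than swapping the two components of $\{Q=1\}$ --- closes as follows. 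The hypothesis $\gamma^TA\gamma=A$ together with \eqref{eq:false_indef_P_choice} says $P\gamma P^{-1}\in\O(1,1)$ for the reference form $x_1^2-x_2^2$. That group has four connected components; the elements that fix the branch $\{(\sinh t,\cosh t)^T\}$ while reversing its orientation (reference model $\mathrm{diag}(-1,1)$), and likewise the elements that fix $C_Q$ while exchanging the two components of $\{Q=1\}$, all have determinant $-1$. Since $\det\gamma=1$ and $\gamma C_Q=C_Q$, the matrix $P\gamma P^{-1}$ must lie in the identity component, i.e.\ it is a hyperbolic rotation $\begin{psmallmatrix}\cosh s&\sinh s\\ \sinh s&\cosh s\end{psmallmatrix}$, and the addition formulas give $\gamma\bm{c}(t)=\bm{c}(t+s)$ and $\gamma\bm{c^\perp}(t)=\bm{c^\perp}(t+s)$ simultaneously, with the same $s$. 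With that observation pinned down, your change of variables in the integrals and the term-by-term matching of the three cases in \eqref{eq:shadow definition} go through exactly as you describe, so the proof is complete.
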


\section{False-indefinite theta functions and $p_\od^\eu(n)$}\label{sec:parity_partition_false_indefinite}

We next consider $p_\od^\eu$ and give details on its relationship to false-indefinite theta functions and Maass forms. Recalling the discussion of the generating function \eqref{Eq: F_od^eu generating function}, we only need to focus on the coefficients $\alpha_0$ and $\alpha_1$ in \eqref{eq:alpha_definition}
and their generating functions described in equations
\eqref{eq:alpha_generating_function} and \eqref{eq:u0_u1_definition}.
We start our analysis with \eqref{eq:sigma_theta_representation} and write it in the following format to clarify its nature as a false-indefinite theta functions (changing variables $n \mapsto -n-1$ in the term associated to $q^{2n+1}$)
\begin{equation*}
\sigma(q) = \frac{1}{2} \sum_{n,j \in \IZ}
\left( 1 + \sgn \lp n+j + \frac{1}{6} \rp \sgn \lp n-j + \frac{1}{6} \rp \right)
(-1)^{n+j} q^{\frac{n (3n+1)}2- j^2}.
\end{equation*}
Inserting this expression in \eqref{eq:u0_u1_definition}, we then find
\begin{align*}
u_0 (\t) &= - \frac{1}{2} \sum_{\a,\b \in \{0,1\}} (-1)^{\a+\b}
\sum_{\bm{n} \in \IZ^2 + \lp \frac{\a}{2}+\frac{\b}{4}+\frac{1}{24}, \frac{\a}{2} \rp}
\big( 1 + \sgn (2n_1+n_2) \sgn (2n_1-n_2) \big) q^{12n_1^2-2n_2^2},\\
u_1 (\t) &= - \frac{1}{2} \sum_{\a,\b \in \{0,1\}} (-1)^{\a+\b}
\sum_{\bm{n} \in \IZ^2 + \lp \frac{\a}{2}+\frac{\b}{4}+\frac{1}{24}, \frac{1-\a}{2} \rp}
\big( 1 + \sgn (2n_1+n_2)  \sgn (2n_1-n_2) \big) q^{12n_1^2-2n_2^2} .
\end{align*}
Now, $u_0$ and $u_1$ can be expressed in terms of false-indefinite theta functions in the terminology of Subsection \ref{sec:mock_maass} with\footnote{
Here we let $C_Q$ to be the set of vectors $\bm{c} \in \IR^2$ with $Q(\bm{c}) = -1$ that have their second components positive and correspondingly let $C_Q^\perp$ to be the component of $Q(\bm{c}) = 1$ vectors with the first component positive.
}
\begin{equation*}
A = \mat{24 & 0 \\ 0 & -4}, \qquad
\bm{c_1} = \frac{1}{\sqrt{6}} \mat{-1 \\ 3}, \quad \mbox{and} \quad
\bm{c_2} = \frac{1}{\sqrt{6}} \mat{1 \\ 3}
\end{equation*}
satisfying
\begin{equation*}
Q(\bm{c_1}) = Q(\bm{c_2}) = -1, \ \
B(\bm{c_1}, \bm{c_2}) = -10, \ \
B(\bm n,\bm{c_1}) = -2 \sqrt{6} (2n_1 + n_2), \ \
B (\bm n,\bm{c_2}) = 2 \sqrt{6} (2n_1 - n_2).
\end{equation*}
Then \eqref{eq:false_indef_P_choice} is satisfied with $P := \pmat{2\sqrt3 & 0 \\ 0 & \sqrt{2}}$
and find from equation \eqref{eq:ct_definition} the parameters
\begin{equation*}
t_1 = \log\lp \sqrt{3} - \sqrt{2} \rp, \qquad t_2 = \log\lp \sqrt{3} + \sqrt{2} \rp .
\end{equation*}
The resulting false-indefinite theta functions are (with $24\mu_1, 4\mu_2 \in \IZ$)
\begin{align*}
f_{\bm\mu}(\t) &:=
- \frac{\mathrm{arccosh} (5)}{\pi}  \d_{\bm{\mu} \in \Z^2}
+\frac{1}{2} \sum_{\substack{\bm{n} \in \IZ^2 +\bm{\mu} \\ \bm{n} \neq \bm{0} }}
\big( 1 + \sgn (2n_1+n_2) \sgn (2n_1-n_2) \big) q^{12n_1^2-2n_2^2} .
\end{align*}
This produces the following mock Maass theta functions
(and modular completions $\wh{F}_{\bm\mu}$ by \eqref{Mock Maass Error to Modularity})
\begin{align}
F_{\bm\mu}(\t) &:=
\frac{\sqrt{\t_2}}{2} \hspace{-0.2cm} \sum_{\substack{\bm n \in \IZ^2 + \bm\mu \\ \bm{n} \neq \bm{0}}} \hspace{-0.2cm}
\left( 1 + \sgn (2n_1+n_2) \sgn (2n_1-n_2) \right)
K_0(2\pi \!\left(12n_1^2-2n_2^2\right)\! \t_2)
e^{2\pi i \left(12n_1^2-2n_2^2\right) \t_1}
\notag \\ &\qquad
+
\frac{\sqrt{\t_2}}{2} \hspace{-0.2cm} \sum_{\substack{\bm n \in \Z^2 + \bm\mu \\ \bm{n} \neq \bm{0}}} \hspace{-0.2cm}
\left( 1 - \sgn (3n_1+n_2) \sgn (3n_1-n_2) \right)
K_0(-2\pi \left(12n_1^2-2n_2^2\right) \t_2)
e^{2\pi i \left(12n_1^2-2n_2^2\right) \t_1}
\notag \\ &\qquad \qquad
+ \mathrm{arccosh} (5) \sqrt{\t_2} \d_{\bm{\mu} \in \Z^2} ,
\label{eq:Fmu_definition}
\end{align}
which by equation \eqref{eq:eichler_type_integral} satisfy
\begin{align*}
f_{\bm\mu}(\t)  = - \dfrac{2}{\pi} \int_\tau^{i\infty} \left[ F_{\bm\mu}(z), R_\tau(z) \right] .
\end{align*}
With these definitions in hand, for $j \in \{0,1\}$ we have
\begin{equation}\label{eq:u0_u1_false_indefinite_description}
u_j (\t)=\frac{1}{2} \lp \sum_{\bm{\mu} \in \mathcal{S}^+_j}  f_{\bm\mu}(\t)
-  \sum_{\bm{\mu} \in \mathcal{S}^-_j} f_{\bm\mu}(\t) \rp,
\end{equation}
where
\begin{align*}
\mathcal{S}^+_0 \!:=\!\! \left\{\!
\lp \frac{7}{24}, 0 \rp,
\lp \frac{17}{24}, 0 \rp,
\lp \frac{11}{24}, \frac{1}{2} \rp,
\lp \frac{13}{24}, \frac{1}{2} \rp
\!\right\},
\ 
\mathcal{S}^-_0 \!:=\!\! \left\{\!
\lp \frac{1}{24}, 0 \rp,
\lp \frac{23}{24}, 0 \rp,
\lp \frac{5}{24}, \frac{1}{2} \rp,
\lp \frac{19}{24}, \frac{1}{2} \rp
\!\right\}\! , \\
\mathcal{S}^+_1 \!:=\!\! \left\{\!
\lp \frac{11}{24}, 0 \rp,
\lp \frac{13}{24}, 0 \rp,
\lp \frac{7}{24}, \frac{1}{2} \rp,
\lp \frac{17}{24}, \frac{1}{2} \rp
\!\right\}\! ,
\ 
\mathcal{S}^-_1 \!:=\!\! \left\{ \!
\lp \frac{5}{24}, 0 \rp,
\lp \frac{19}{24}, 0 \rp,
\lp \frac{1}{24}, \frac{1}{2} \rp,
\lp \frac{23}{24}, \frac{1}{2} \rp
\!\right\}\! .
\end{align*}
We also extend the relation \eqref{eq:u0_u1_false_indefinite_description} to a third function with $j=2$ by setting
\begin{equation*}
\mathcal{S}^+_2 \!:=\!\! \left\{\!
\lp \frac{5}{12}, \frac{1}{4} \rp,
\lp \frac{7}{12}, \frac{1}{4} \rp,
\lp \frac{5}{12}, \frac{3}{4} \rp,
\lp \frac{7}{12}, \frac{3}{4} \rp
\!\right\} \! ,
\ 
\mathcal{S}^-_2 \!:=\!\! \left\{\!
\lp \frac{1}{12}, \frac{1}{4} \rp,
\lp \frac{11}{12}, \frac{1}{4} \rp,
\lp \frac{1}{12}, \frac{3}{4} \rp,
\lp \frac{11}{12}, \frac{3}{4} \rp
\!\right\}\! .
\end{equation*}
We see below that $u_2$ is in the $\mathrm{SL}_2(\Z)$-orbit of $u_0$ and $u_1$ and it completes these functions to a vector-valued modular object under the full modular group.

For $j \in \{0,1,2\}$ we also define the corresponding functions
\begin{equation}\label{eq:Uj_definition}
U_j :=\frac{1}{2} \lp \sum_{\bm{\mu} \in \mathcal{S}^+_j}  F_{\bm\mu}
-  \sum_{\bm{\mu} \in \mathcal{S}^-_j} F_{\bm\mu} \rp
\ \mbox{ and } \
\wh{U}_j :=\frac{1}{2} \lp \sum_{\bm{\mu} \in \mathcal{S}^+_j}  \wh{F}_{\bm\mu}
-  \sum_{\bm{\mu} \in \mathcal{S}^-_j} \wh{F}_{\bm\mu} \rp .
\end{equation}
To connect $p_\od^\eu$ and Maass forms, we show that $U_j=\widehat U_j$, which yields
the modular behavior of $u_j$.
\begin{prop} \label{P: U_j and u_j transformation}
For $j \in \{0,1,2\}$ we have $U_j = \wh{U}_j$. For any
$M = \begin{psmallmatrix} a & b \\ c & d \end{psmallmatrix} \in \SL_2 (\IZ)$, we have
\begin{align*}
U_j\!\lp \dfrac{a\tau + b}{c\tau + d} \rp = \sum_{\ell=0}^2 \Psi_{M}\!\lp j,\ell \rp U_\ell(\tau)
\end{align*}
and for $\t_1 \neq -\frac{d}{c}$
\begin{align*}
u_j\!\lp \dfrac{a\tau + b}{c\tau + d} \rp =
\sgn (c \tau_1 +d) \lp c\tau + d \rp \sum_{\ell=0}^2 \Psi_{M}\!\lp j,\ell \rp \lp u_\ell\!\lp \tau \rp + \mathcal U_{\ell, - \frac dc}\!\lp \tau \rp \rp,
\end{align*}
where $\mathcal{U}_{j, - \frac dc}$ is defined as in \eqref{eq:obstruction_modularity} and the multiplier system $\Psi_{M}$ is given by
\begin{equation}\label{eq:U_modular_transformation_multiplier}
\Psi_{M}(j,\ell)
=
\sum_{\bm{\mu} \in \mathcal{S}^+_j} \psi_{M,Q} (\bm{\mu}, \bm{\nu})
-  \sum_{\bm{\mu} \in \mathcal{S}^-_j} \psi_{M,Q} (\bm{\mu}, \bm{\nu})
\ \ \mbox{with } \bm{\nu} \in \mathcal{S}^+_\ell
\mbox{ and } Q(\bm{n}) = 12n_1^2-2n_2^2.
\end{equation}
Here the multiplier system $\psi_{M,Q}(\bm{\mu}, \bm{\nu})$ is defined in \eqref{eq:theta_multiplier_system}.
\end{prop}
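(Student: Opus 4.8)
The plan is to prove the three assertions in order, with the identity $U_j = \widehat U_j$ being the crucial first step from which everything else follows. First I would establish $U_j = \widehat U_j$ by showing that the shadow contributions cancel in the combination \eqref{eq:Uj_definition}. By \eqref{Mock Maass Error to Modularity} we have $\widehat F_{\bm\mu} = F_{\bm\mu} + \varphi_{\bm\mu}^{[\bm{c_1}]} - \varphi_{\bm\mu}^{[\bm{c_2}]}$, so $\widehat U_j - U_j$ is a fixed linear combination of the $\varphi_{\bm\mu}^{[\bm{c_i}]}$ over $\bm\mu \in \mathcal S_j^\pm$ and $i \in \{1,2\}$. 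The idea is to exhibit an element $\gamma \in \mathrm{SL}_2(\Z)$ with $\gamma^T A \gamma = A$, $\gamma C_Q = C_Q$, and $\gamma \bm{c_1} = \bm{c_2}$ (equivalently $\gamma$ exchanges the two cusps $t_1, t_2$), and then invoke Lemma \ref{L:Zwegers Lemma} to get $\varphi_{\gamma\bm\mu}^{[\bm{c_2}]} = \varphi_{\gamma\bm\mu}^{[\gamma\bm{c_1}]} = \varphi_{\bm\mu}^{[\bm{c_1}]}$. For this to force the total to vanish, I need the sets $\mathcal S_j^\pm$ to be stable (with the correct signs) under the induced action $\bm\mu \mapsto \gamma\bm\mu$ on $A^{-1}\Z^2/\Z^2$; this is a finite check over the $16$ listed cosets. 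Here $\bm{c_1} = \tfrac{1}{\sqrt6}(-1,3)^T$ and $\bm{c_2} = \tfrac{1}{\sqrt6}(1,3)^T$ differ by the sign of the first coordinate, so the natural candidate is the reflection-type matrix conjugated appropriately into $\mathrm{SL}_2(\Z)$ (note $(n_1,n_2)\mapsto(-n_1,n_2)$ preserves $Q(\bm n)=12n_1^2-2n_2^2$ and swaps the two sign factors $\sgn(2n_1\pm n_2)$); one must verify it actually lies in $\mathrm{SL}_2(\Z)$ and fixes $C_Q$ as opposed to swapping the two sheets, possibly composing with $-I$.

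Second, granting $U_j = \widehat U_j$, the modular transformation of $U_j$ is immediate: $\widehat U_j$ is by definition the same $\mathcal S_j^\pm$-combination of the $\widehat F_{\bm\mu} = \widehat{\mT}_{\bm\mu}$, which transform by \eqref{eq:mock_maass_theta_transformation} via the multiplier $\psi_{M,Q}(\bm\mu,\bm\nu)$. Pushing $M$ through the sum and regrouping the resulting $\widehat F_{\bm\nu}$'s according to which $\mathcal S_\ell^+$ the coset $\bm\nu$ falls into yields exactly the stated formula with $\Psi_M(j,\ell)$ as in \eqref{eq:U_modular_transformation_multiplier}; one should check the definition of $\Psi_M(j,\ell)$ is independent of the choice of representative $\bm\nu \in \mathcal S_\ell^+$, which again follows from the $\gamma$-stability above together with the quasi-periodicity properties of $\psi_{M,Q}$ in its arguments. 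Implicit here is that the three orbits $\{U_0, U_1, U_2\}$ are closed under $\mathrm{SL}_2(\Z)$ — i.e. every $\widehat F_{\bm\nu}$ arising has its coset $\bm\nu$ lying in $\mathcal S_0^+ \cup \mathcal S_0^- \cup \mathcal S_1^+ \cup \cdots \cup \mathcal S_2^-$ — which one verifies by computing the action of the generators $T = \left(\begin{smallmatrix}1&1\\0&1\end{smallmatrix}\right)$ and $S = \left(\begin{smallmatrix}0&-1\\1&0\end{smallmatrix}\right)$ on cosets; this is also where one sees that $u_2$ is genuinely needed to close the orbit.

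Third, the transformation law for $u_j$ follows by applying the Eichler-type integral \eqref{eq:eichler_type_integral} (equivalently, the general relation between $U_j$ and $u_j$ recalled in Subsection \ref{sec:maass_form}) to the now-established modularity of $U_j$. Since $U_j$ is a bona fide vector-valued Maass form with eigenvalue $\tfrac14$ and multiplier $\Psi_M$, the discussion following \eqref{eq:lewis_zagier_one_form}, together with the covariance of $R_\tau(z)$ of weight $(1,0)$ and its branch-tracking sign, gives precisely $u_j\!\left(\tfrac{a\tau+b}{c\tau+d}\right) = \sgn(c\tau_1+d)(c\tau+d)\sum_\ell \Psi_M(j,\ell)(u_\ell(\tau) + \mathcal U_{\ell,-d/c}(\tau))$ for $\tau_1 \neq -d/c$, with $\mathcal U_{\ell,-d/c}$ the obstruction integral \eqref{eq:obstruction_modularity}. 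The only point to be careful about is bookkeeping the constant terms (the $\delta_{\bm\mu\in\Z^2}$ contributions proportional to $\mathrm{arccosh}(5)$, equivalently $\tfrac12\log 5$ since $\mathrm{arccosh}(5) = \log(5+2\sqrt6)$ and $t_2 - t_1 = \mathrm{arccosh}(5)$), and checking they are consistent with the $c_j\sqrt{\tau_2}$ terms in the Fourier expansion \eqref{eq:Uj_Fourier_decomposition}.

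The main obstacle is the first step — finding the symmetry $\gamma$ and carrying out the finite verification that the eight-element sets $\mathcal S_j^\pm$ are stable with the right signs under both $\gamma$ and under $\bm\mu \mapsto \bm\mu + A^{-1}(\text{shift})$. This is the content that makes the specific choices of $\mathcal S_j^\pm$ (and the auxiliary $\mathcal S_2^\pm$) non-arbitrary, and it is where the combinatorial input about $p_\od^\eu$ and the congruence classes mod $24$ and mod $2$ really enters. Everything after that is formal transport of structure through the Lewis--Zagier integral and the Zwegers completion machinery already set up in Section \ref{sec:prelims}.
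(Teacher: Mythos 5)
Your overall architecture is exactly the paper's: establish $U_j=\widehat U_j$ by applying Lemma \ref{L:Zwegers Lemma} to a symmetry $\gamma$ satisfying $\gamma^TA\gamma=A$, $\gamma C_Q=C_Q$, $\gamma\bm{c_1}=\bm{c_2}$, together with $\gamma$-stability of the sets $\mathcal S_j^{\pm}$; then read off the transformation of $U_j$ from \eqref{eq:mock_maass_theta_transformation} applied to the $\widehat F_{\bm\mu}$ and regroup; then transport to $u_j$ through the Lewis--Zagier integral with the weight-$(1,0)$ covariance of $R_\tau$. Your second and third steps are correct and match the paper, which carries them out by computing $\Psi_T$ and $\Psi_S$ explicitly and deducing the general multiplier from the generators.

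The one step that would fail as written is your proposed construction of $\gamma$. The map $(n_1,n_2)\mapsto(-n_1,n_2)$ is the matrix $\mathrm{diag}(-1,1)$, which has determinant $-1$; conjugation preserves the determinant and so does composing with $-I$, so no amount of conjugating or sign-flipping turns a reflection into an element of $\mathrm{SL}_2(\Z)$. The symmetry you actually need is not a reflection but the infinite-order hyperbolic automorph of $Q$ attached to the fundamental unit $5+2\sqrt6$: concretely $\gamma=\begin{psmallmatrix}5&2\\12&5\end{psmallmatrix}$, for which one checks directly that $\gamma^TA\gamma=A$, that $\gamma\bm{c_1}=\tfrac{1}{\sqrt6}\begin{psmallmatrix}5&2\\12&5\end{psmallmatrix}\begin{psmallmatrix}-1\\3\end{psmallmatrix}=\bm{c_2}$ (hence $\gamma C_Q=C_Q$, since $\bm{c_2}$ lies in the same component as $\bm{c_1}$), and that $\gamma$ preserves each $\mathcal S_j^{\pm}$ modulo $1$ --- the finite check you describe. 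This is precisely where the real-quadratic-field structure enters (note $t_2-t_1=\log\lp 5+2\sqrt6\rp=\operatorname{arccosh}(5)$ is the associated regulator-type constant appearing in the constant terms), and without the correct $\gamma$ the proof of $U_j=\widehat U_j$ does not get off the ground.
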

\begin{remark}
For modular translation and inversion, i.e., $T = \begin{psmallmatrix} 1 & 1 \\ 0 & 1 \end{psmallmatrix}$ and $S = \begin{psmallmatrix} 0 & -1 \\ 1 & 0 \end{psmallmatrix}$,
the multiplier system is given by\footnote{
Here the entry of the matrix $\Psi_M$ at position $(j+1,k+1)$ is given by $\Psi_M (j,k)$.
}
\begin{equation}\label{eq:U_multiplier_T_S}
\Psi_{T} = \mat{
\z_{48} & 0 & 0 \\
0 & \z_{48}^{25} & 0 \\
0 & 0 & \z_{24}^{23}
}
\andd
\Psi_{S} = \frac{1}{2} \mat{
1 & 1 & \sqrt{2} \\
1 & 1 & -\sqrt{2} \\
\sqrt{2} & -\sqrt{2} & 0
},
\end{equation}
where here and throughout we define $\z_N := e^{\frac{2\pi i}{N}}$.
\end{remark}
\begin{proof}[Proof of Proposition \ref{P: U_j and u_j transformation}]
The matrix $\gamma = \begin{psmallmatrix} 5 & 2 \\ 12 & 5 \end{psmallmatrix}$ is in $\mathrm{SL}_2(\Z)$ with $\gamma^T A \gamma = A$ and $\gamma \bm{c_1} = \bm{c_2}$ (so in particular $\g C_Q = C_Q$).
It is also straightforward to show that $\gamma$ preserves each $\mathcal S_j^\pm$ (mod $1$).
Therefore by Lemma \ref{L:Zwegers Lemma} we have $U_j = \widehat U_j$. Since $U_j$ inherits modular transformation laws from $\widehat U_j$, which consequently lead to the modular transformations of $u_j$ according to the discussion of Subsection \ref{sec:maass_form}, we only need to compute the transformation for $\widehat U_j$ following the transformations of $\wh{F}_{\bm{\mu}}$ from equations \eqref{eq:mock_maass_theta_transformation} and \eqref{eq:theta_multiplier_system}.
We next compute the translation and inversion transformations of $\widehat U_j$ and show that $\widehat U_j$ transform among themselves with multiplier system given by \eqref{eq:U_multiplier_T_S}. The modular translation property immediately follows from
\begin{equation*}
Q (\bm{\mu}) \equiv
\begin{cases}
\frac{1}{48} \pmod{1} \quad
& \mbox{if } \bm{\mu} \in \mathcal{S}^+_0 \cup \mathcal{S}^-_0,
\\
\frac{25}{48} \pmod{1} \quad
& \mbox{if } \bm{\mu} \in \mathcal{S}^+_1 \cup \mathcal{S}^-_1,
\\
\frac{23}{24} \pmod{1} \quad
& \mbox{if } \bm{\mu} \in \mathcal{S}^+_2 \cup \mathcal{S}^-_2.
\end{cases}
\end{equation*}
For the modular inversion property, we start with the modular transformations of the completed mock Maass theta functions given in \eqref{eq:mock_maass_theta_transformation}, which yields that for any $24\mu_1, 4\mu_2 \in \IZ$ we have
\begin{equation*}
\wh{F}_{\bm{\mu}} \!\lp -\frac{1}{\t} \rp
=
\sum_{\bm{\nu} \in A^{-1} \Z^2/ \Z^2} \psi_{S,Q} (\bm{\mu}, \bm{\nu})
\wh{F}_{\bm\nu} (\t)
\where
\psi_{S,Q} (\bm{\mu}, \bm{\nu})
= \frac{1}{4 \sqrt{6}}  e^{-2\pi i (24 \mu_1 \nu_1 - 4 \mu_2 \nu_2)}.
\end{equation*}
Then we have
\begin{equation*}
\wh{U}_j \! \lp -\frac{1}{\t} \rp
=
\frac{1}{2} \sum_{\bm{\nu} \in A^{-1} \Z^2/ \Z^2}
\lp \sum_{\bm{\mu} \in \mathcal{S}^+_j} \psi_{S,Q} (\bm{\mu}, \bm{\nu})
-  \sum_{\bm{\mu} \in \mathcal{S}^-_j} \psi_{S,Q} (\bm{\mu}, \bm{\nu})  \rp
\wh{F}_{\bm\nu} (\t)
\end{equation*}
and the inversion transformation for $\wh{U}_j$ as described by the multiplier system in \eqref{eq:U_multiplier_T_S} follows from
\begin{align*}
\sum_{\bm{\mu} \in \mathcal{S}^+_0} \psi_{S,Q} (\bm{\mu}, \bm{\nu})
-  \sum_{\bm{\mu} \in \mathcal{S}^-_0} \psi_{S,Q} (\bm{\mu}, \bm{\nu})
&=
\begin{cases}
\pm \frac{1}{2} &\quad \mbox{if } \bm{\nu} \in \mathcal{S}_0^\pm,
\\
\pm \frac{1}{2} &\quad \mbox{if } \bm{\nu} \in \mathcal{S}_1^\pm,
\\
\pm \frac{1}{\sqrt{2}} &\quad \mbox{if } \bm{\nu} \in \mathcal{S}_2^\pm,
\\
0 &\quad \mbox{otherwise},
\end{cases}\\
\sum_{\bm{\mu} \in \mathcal{S}^+_1} \psi_{S,Q} (\bm{\mu}, \bm{\nu})
-  \sum_{\bm{\mu} \in \mathcal{S}^-_1} \psi_{S,Q} (\bm{\mu}, \bm{\nu})
&=
\begin{cases}
\pm \frac{1}{2} &\quad \mbox{if } \bm{\nu} \in \mathcal{S}_0^\pm,
\\
\pm \frac{1}{2} &\quad \mbox{if } \bm{\nu} \in \mathcal{S}_1^\pm,
\\
\mp \frac{1}{\sqrt{2}} &\quad \mbox{if } \bm{\nu} \in \mathcal{S}_2^\pm,
\\
0 &\quad \mbox{otherwise},
\end{cases}\\
\sum_{\bm{\mu} \in \mathcal{S}^+_2} \psi_{S,Q} (\bm{\mu}, \bm{\nu})
-  \sum_{\bm{\mu} \in \mathcal{S}^-_2} \psi_{S,Q} (\bm{\mu}, \bm{\nu})
&=
\begin{cases}
\pm \frac{1}{\sqrt{2}} &\quad \mbox{if } \bm{\nu} \in \mathcal{S}_0^\pm,
\\
\mp \frac{1}{\sqrt{2}} &\quad \mbox{if } \bm{\nu} \in \mathcal{S}_1^\pm,
\\
0 &\quad \mbox{otherwise}.
\end{cases}
\end{align*}
The general transformation with the multiplier system \eqref{eq:U_modular_transformation_multiplier} follows from $S$ and $T$.
\end{proof}

\section{The Obstruction to Modularity}\label{sec:obstruction_modularity}
In this section, we derive ``Mordell-type'' integral representations\footnote{This terminology aligns with the analogous integral representation in the world of mock theta functions.} for the obstruction to modularity (for $c>0$ and $\t_1 > -\frac{d}{c}$)
\begin{equation}\label{eq:obstruction_modularity_def}
\mathcal U_{j, -\frac dc}\lp \tau \rp := \dfrac{2}{\pi} \int_{-\frac dc}^{i\infty} \left[ U_j(z), R_\tau(z) \right],
\end{equation}
which appear in Proposition \ref{P: U_j and u_j transformation}, and its extension to the whole upper half-plane (see Remark \ref{rem:extension_obstruction}).
These are crucial for determining the contribution of the principal parts to the asymptotic growth of $\a_j$ (and hence of $p_\od^\eu$), as well as precise bounds on the error term. 
For this calculation, we follow \eqref{eq:Uj_Fourier_decomposition} and denote
by $d_j$ the Fourier coefficients in the expansion
\begin{align} \label{eq:Uj_general_form}
U_j(z) = \sqrt{z_2} \sum_{n \in \Z + \beta_j} d_j(n) K_0\!\lp 2\pi |n| z_2 \rp e^{2\pi i n z_1},
\end{align}
where $j \in \{0,1,2\}$ and $\b_0 := \frac{1}{48}$, $\b_1 := \frac{25}{48}$, $\b_2 := \frac{23}{24}$ (see Proposition \ref{P: U_j and u_j transformation} and \eqref{eq:U_multiplier_T_S}). In particular, there is no constant term in \eqref{eq:Uj_general_form}. The calculation of the Mordell-type representation for $\mathcal{U}_{j,-\frac dc}$
(or rather for the combination $u_j + \mathcal{U}_{j,-\frac dc}$) is lengthy, and is broken down into several steps.

\subsection{Expanding the obstruction to modularity}
We first insert the Fourier expansion \eqref{eq:Uj_general_form} in \eqref{eq:obstruction_modularity_def} for the obstruction to modularity and then simplify it.

\begin{lemma} \label{L: First Mordell Lemma}
Let $-\frac dc \in \Q$ and $\t \in \H$ be such that $\t_1 > -\frac dc$. Then for
$j \in \{0,1,2\}$ we have
\begin{align*}
\mathcal U_{j,- \frac dc}\lp \t \rp = - \dfrac{1}{\pi} \int_0^\infty \sum_{n \in \Z+\beta_j} d_j(n) e^{- \frac{2\pi i d n}{c}} \dfrac{t K_0\lp 2\pi |n| t \rp}{\sqrt{t^2 + \lp \t + \frac dc \rp^2}} \lp 2\pi n + \dfrac{i\lp \t + \frac dc \rp}{t^2 + \lp \t + \frac dc \rp^2} \rp dt.
\end{align*}
\end{lemma}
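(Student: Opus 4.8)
The plan is to start from the definition \eqref{eq:obstruction_modularity_def} of $\mathcal{U}_{j,-\frac dc}$, substitute the Fourier expansion \eqref{eq:Uj_general_form} of $U_j$, and parametrize the vertical integration path. Since $U_j(z) = \sqrt{z_2}\sum_{n\in\Z+\beta_j} d_j(n) K_0(2\pi|n|z_2)e^{2\pi i n z_1}$ (no constant term, so no $\log$ terms appear), I would first compute the two pieces $(\partial_z U_j(z))R_\tau(z)\,dz$ and $U_j(z)(\partial_{\bar z}R_\tau(z))\,d\bar z$ of the one-form \eqref{eq:lewis_zagier_one_form}. On the path $z = -\frac dc + it$ with $t$ running from $0$ to $\infty$, we have $z_1 = -\frac dc$, $z_2 = t$, $dz = i\,dt$, $d\bar z = -i\,dt$, so both differentials become explicit functions of $t$. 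Each Fourier mode contributes a factor $e^{2\pi i n(-d/c)} = e^{-2\pi i dn/c}$, which matches the exponential in the claimed formula.

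The key computational step is to evaluate $\partial_z$ acting on $\sqrt{z_2}K_0(2\pi|n|z_2)e^{2\pi i n z_1}$ and $\partial_{\bar z}$ acting on $R_\tau(z) = \sqrt{z_2}/\sqrt{(z-\tau)(\bar z-\tau)}$, using $\partial_z = \frac12(\partial_{z_1} - i\partial_{z_2})$ and $\partial_{\bar z} = \frac12(\partial_{z_1}+i\partial_{z_2})$. For $R_\tau$, only the $\bar z$-dependence through $z_2 = \frac{z-\bar z}{2i}$ and through the $(\bar z - \tau)$ factor matters, giving $\partial_{\bar z}R_\tau(z)$ as an elementary algebraic expression in $z, \bar z, \tau$. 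When restricted to $z = -\frac dc + it$, the combination $(z-\tau)(\bar z - \tau)$ becomes $-\big(t^2 + (\tau + \frac dc)^2\big)$ up to the branch choice, producing the denominator $\sqrt{t^2 + (\tau+\frac dc)^2}$; I would need to check the principal-branch sign conventions carefully here (the footnote flags this) to confirm the overall sign $-\frac1\pi$ in front. The derivative-of-Bessel terms, after using the standard identity $K_0'(x) = -K_1(x)$ together with the recurrence that lets one re-express things, should combine with the $R_\tau$-derivative term to yield the bracketed factor $\big(2\pi n + \frac{i(\tau+d/c)}{t^2+(\tau+d/c)^2}\big)$; organizing this cancellation cleanly is the part requiring the most care.

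The main obstacle I expect is twofold: (i) keeping the branch of the square root in $R_\tau$ consistent along the path and verifying that the vertical path indeed avoids the branch cut from $\tau$ to $\bar\tau$ (guaranteed by the hypothesis $\tau_1 > -\frac dc$, which places the cut strictly to the right of the path), so that no extra boundary or residue contributions arise; and (ii) justifying the interchange of the sum over $n$ with the integral over $t$ — this follows from the polynomial boundedness of $d_j(n)$ (stated after \eqref{eq:Uj_Fourier_decomposition}) and the exponential decay of $K_0(2\pi|n|t)$ as $|n|t\to\infty$, together with integrability near $t=0$ coming from the $\sqrt{z_2}$ factor and the $K_0$ logarithmic singularity being mild. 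Once these analytic points are settled, the lemma follows by collecting the $dt$-integrand from the two differentials into the single displayed expression; I would present the branch/convergence checks first and then the (routine but lengthy) derivative computation, rather than the reverse.
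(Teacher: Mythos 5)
Your starting point matches the paper's: restrict the one-form to the vertical path $z=-\frac dc+it$ and insert the Fourier expansion (the paper quotes the resulting path-integral formula from the proof of Proposition 2.3 of \cite{BN} rather than recomputing $\partial_z U_j$ and $\partial_{\bar z}R_\tau$ from scratch, but that is the same computation). The genuine gap is in your key step: the terms do \emph{not} ``combine\ldots to yield the bracketed factor.'' Since $\partial_z$ hits $K_0(2\pi|n|z_2)$ and $K_0'=-K_1$, the raw integrand unavoidably contains terms $2\pi|n|\,t\,K_1(2\pi|n|t)$, and $K_1$ is not algebraically expressible through $K_0$, so no pointwise cancellation can remove them; the Lemma's integrand, by contrast, contains only $K_0$. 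The paper disposes of the $K_1$ terms by recognizing that they sit inside an exact total $t$-derivative, namely $-\frac{\del}{\del t}\bigl(tK_0(2\pi|n|t)\,(t^2+(\t+\tfrac dc)^2)^{-\frac12}\bigr)$, resumming that derivative into $\frac{\del}{\del t}\bigl(U_j(-\tfrac dc+it)\sqrt{t}\,(t^2+(\t+\tfrac dc)^2)^{-\frac12}\bigr)$ (the interchange of sum and derivative being justified by uniform convergence on compact subsets of $t>0$), and then showing that the integral of this exact term is zero because the boundary contributions vanish.

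That vanishing is itself a nontrivial input absent from your plan: as $t\to\infty$ it follows from the absence of a constant term in \eqref{eq:Uj_general_form}, but as $t\to 0^+$ the exponential decay of $U_j(-\tfrac dc+it)$ is invisible term-by-term from the Fourier expansion and is deduced from the modular transformation law of Proposition \ref{P: U_j and u_j transformation}. Relatedly, your second ``main obstacle'' is misplaced: the Lemma keeps the sum over $n$ \emph{inside} the $t$-integral, so no sum/integral interchange is needed here --- the paper explicitly warns right after this Lemma that such an interchange is not yet justified and devotes Lemma \ref{L: Second Mordell Lemma} to it. Without the integration-by-parts-plus-boundary-decay mechanism, your computation stalls with leftover $K_1$ terms and cannot reach the stated formula.
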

\begin{proof}
We start by writing (see e.g. proof of Proposition 2.3 of \cite{BN})
\begin{equation*}
\calU_{j,-\frac dc}(\t) = \frac{1}{2\pi}\int_0^\infty
\left(4it U_j'\!\left(-\frac dc+it\right)-\frac{t+i\left(\t+\frac dc\right)}{t-i\left(\t+\frac dc\right)} U_j\!\left(-\frac dc+it\right)\right) \frac{dt}{\sqrt{t} \sqrt{t^2+\left(\t+\frac dc\right)^2}}.
\end{equation*}
Now we use the Fourier expansion of $U_j$ given in \eqref{eq:Uj_general_form} and the corresponding Fourier expansion
\begin{equation*}
-iU_j'(z) = \frac1{4\sqrt{z_2}}\sum_{n\in\Z+\b_j} d_j(n)\big((4\pi n z_2-1)K_0(2\pi|n|z_2)+4\pi|n|z_2K_1(2\pi|n|z_2)\big)e^{2\pi inz_1},
\end{equation*}
derived employing $K_0'(x)=-K_1(x)$. This yields
\begin{align*}
\calU_{j,-\frac dc}(\t)
&= -\frac1\pi \int_0^\infty \sum_{n\in\Z+\b_j} d_j(n)e^{-\frac{2\pi i d n}{c}} \frac{1}{\sqrt{t^2+\left(\t+\frac dc\right)^2}}\\
&\hspace{4cm}\times \left(\left(2\pi nt+\frac{i\left(\t+\frac dc\right)}{t-i\left(\t+\frac dc\right)}\right)K_0(2\pi|n|t)+2\pi|n|tK_1(2\pi|n|t)\right) dt,
\end{align*}
which we rewrite as
\begin{align*}
\calU_{j,-\frac dc}(\t) &= -\frac{1}{\pi}
\int_0^\infty
\sum_{n\in\Z+\b_j} d_j(n)e^{-\frac{2\pi i d n}{c}}
\\
&\hspace{1cm}\times
\lp \frac{t}{\sqrt{t^2+\left(\t+\frac dc\right)^2}}
\left(2\pi n+\frac{i\left(\t+\frac dc\right)}{t^2+\left(\t+\frac dc\right)^2}\rp
K_0(2\pi|n|t)
-
\frac{\del}{\del t}
\frac{tK_0 (2\pi|n|t)}{\sqrt{t^2+\left(\t+\frac dc\right)^2}}\rp dt.
\end{align*}
For the second term, the exponential decay of the $K$-Bessel function leads to the uniform convergence of the series on compact subsets of $t>0$ (with or without the derivative). So we can interchange the summation and differentiation and rewrite using \eqref{eq:Uj_general_form}
\begin{multline*}
\calU_{j,-\frac dc}(\t)=-\frac{1}{\pi}
\int_0^\infty \lp
\sum_{n\in\Z+\b_j} d_j(n)e^{-\frac{2\pi i d n}{c}}
\frac{t K_0(2\pi|n|t)}{\sqrt{t^2+\left(\t+\frac dc\right)^2}}
\left(2\pi n+\frac{i\left(\t+\frac dc\right)}{t^2+\left(\t+\frac dc\right)^2}\rp
 \right.
\\
\left. -
\frac{\del}{\del t} \lp U_j \lp - \frac{d}{c} + i t \rp
\sqrt{\frac{t}{t^2+\left(\t+\frac dc\right)^2}} \rp \rp dt.
\end{multline*}
Moreover, we can separate the integral of the total derivative term and explicitly evaluate it to zero thanks to the exponential decay of $U_j(-\frac{d}{c}+it)$  as $t \to \infty$ and as $t \to 0^+$.
The exponential decay towards $t \to \infty$ follows from the absence of constant terms noted in equation \eqref{eq:Uj_general_form}.
Then the modular transformations in Proposition \ref{P: U_j and u_j transformation} imply the exponential decay as $t \to 0^+$.
\end{proof}

We ultimately want to interchange the order of the sum and integral in the remaining term. The problem is that the exponential decay of $U_j \lsp -\frac dc + it \rsp$ and its first derivative as $t \to 0^+$, although in fact true, cannot be seen in the Fourier expansion.
Therefore the interchange of the sum and integral is not justified, and our next
step is to regularize the integral and to separate the component of the integral that is causing the problem. For this, let
\begin{align} \label{eq:KK Definition}
	\KK(x) := x  K_1 (x), \quad \KK_{\t,\frac dc}(n) :=
	\int_0^\infty \frac{t\lp i \lp \t + \frac{d}{c} \rp  K_0 (2 \pi |n| t) - \sgn (n) t K_1 (2 \pi |n| t) \rp}{\lp t^2 +\left(\t+\frac dc\right)^2 \rp^{\frac{3}{2}}}
	dt.
\end{align}

\begin{lemma} \label{L: Second Mordell Lemma}
Let $-\frac dc \in \Q$ and $\t \in \H$ be such that $\t_1 > -\frac dc$. Then for $j \in \{0,1,2\}$ we have
\begin{multline*}
\calU_{j,-\frac dc}(\t)
=
-\frac{1}{2\pi^2 \lp \t + \frac{d}{c} \rp} \lim_{\d \to 0^+}
\sum_{n\in\Z+\b_j} \frac{d_j(n)e^{-\frac{2\pi i d n}{c}}}{n}
\KK(2 \pi |n| \d)
- \frac{1}{\pi} \sum_{n\in\Z+\b_j} d_j(n)e^{-\frac{2\pi i d n}{c}}
\KK_{\t,\frac dc}(n).
\end{multline*}
\end{lemma}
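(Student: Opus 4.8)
The plan is to start from Lemma~\ref{L: First Mordell Lemma}, regularize by cutting the $t$-integral off at $t=\d>0$, and perform one integration by parts to peel off the part of the integrand that obstructs naively interchanging the sum over $n$ with the integral as $\d\to 0^+$. Write $w := \t + \frac dc$, so that $w_1 = \t_1 + \frac dc > 0$ and $w_2 = \t_2 > 0$ under the hypotheses, and split the integrand of Lemma~\ref{L: First Mordell Lemma} as $2\pi n\, tK_0(2\pi|n|t)(t^2+w^2)^{-\frac12} + iw\, tK_0(2\pi|n|t)(t^2+w^2)^{-\frac32}$. Since the integral there converges, $\int_0^\infty = \lim_{\d\to 0^+}\int_\d^\infty$, and on each $[\d,\infty)$ the factor $K_0(2\pi|n|t)$ is exponentially small in $n$ once $2\pi|n|\d\gtrsim 1$ (there being only finitely many $n$ below that threshold, all with $n\neq 0$ since $\b_j\notin\Z$), so there the sum and integral may be interchanged.

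For the first summand I would use the Bessel identity $\frac{d}{dx}(xK_1(x)) = -xK_0(x)$, i.e.\ $tK_0(2\pi|n|t) = -(2\pi|n|)^{-2}\frac{d}{dt}\KK(2\pi|n|t)$, to write $2\pi n\, tK_0(2\pi|n|t)(t^2+w^2)^{-\frac12} = -\frac{1}{2\pi n}(t^2+w^2)^{-\frac12}\frac{d}{dt}\KK(2\pi|n|t)$ and then integrate by parts over $[\d,\infty)$. The boundary term at $\infty$ vanishes by the exponential decay of $\KK$, the boundary term at $\d$ is exactly $\frac{\KK(2\pi|n|\d)}{2\pi n\sqrt{\d^2+w^2}}$, and the remaining integral is $-\frac{1}{2\pi n}\int_\d^\infty t\KK(2\pi|n|t)(t^2+w^2)^{-\frac32}\,dt$. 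Adding back the second summand and using $\frac{\KK(2\pi|n|t)}{2\pi n} = \sgn(n)\, tK_1(2\pi|n|t)$ collapses the two integrals into $\int_\d^\infty t\bigl(iwK_0(2\pi|n|t) - \sgn(n)tK_1(2\pi|n|t)\bigr)(t^2+w^2)^{-\frac32}\,dt$, which is the truncation of $\KK_{\t,\frac dc}(n)$ to $[\d,\infty)$.

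It then remains to send $\d\to 0^+$. For each fixed $n$ the truncated integral converges to $\KK_{\t,\frac dc}(n)$, the endpoint $t=0$ being harmless since $K_0(2\pi|n|t) = O(|\log t|)$ and $2\pi|n|tK_1(2\pi|n|t)\to 1$ there, so the integrand is $O(t|\log t|)$. To pass the limit through the sum of these integrals, I would rescale by $s=2\pi|n|t$ and use $|t^2+w^2|\ge |\Im(w^2)| = 2w_1w_2>0$ to get $|\KK_{\t,\frac dc}(n)|\ll_\t |n|^{-2}$, with the same bound for the truncations uniformly in $\d$; together with the (at most square-root) growth of the Maass coefficients $d_j(n)$ this gives an absolutely convergent majorant, so dominated convergence yields the contribution $-\frac1\pi\sum_n d_j(n)e^{-\frac{2\pi i dn}{c}}\KK_{\t,\frac dc}(n)$. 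In the boundary term $\sqrt{\d^2+w^2}\to w = \t+\frac dc$, and factoring this out leaves $-\frac{1}{2\pi^2(\t+\frac dc)}\lim_{\d\to 0^+}\sum_n \frac{d_j(n)e^{-\frac{2\pi i dn}{c}}}{n}\KK(2\pi|n|\d)$, where the limit must remain outside the sum because $\sum_n \frac{d_j(n)}{n}e^{-\frac{2\pi i dn}{c}}$ need not converge absolutely --- this non-absolute convergence is precisely the Fourier-side shadow of the exponential decay of $U_j(-\frac dc+it)$ as $t\to 0^+$ recorded after Lemma~\ref{L: First Mordell Lemma} (and the existence of the outer limit is then automatic, by subtracting the two convergent pieces from $-\pi\calU_{j,-\frac dc}(\t)$). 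Assembling the two contributions gives the claim. The main obstacle is exactly this asymmetric treatment: establishing the $|n|^{-2}$ decay and uniform-in-$\d$ domination that justify interchanging $\lim_{\d\to 0^+}$ with $\sum_n$ for the $\KK_{\t,\frac dc}$-sum, while recognizing that no such interchange is permissible for the boundary sum.
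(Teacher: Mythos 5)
Your proposal is correct and follows essentially the same route as the paper's proof: truncate the $t$-integral at $\d$, interchange sum and integral using the exponential decay of $K_0$, integrate the $2\pi n$-term by parts via $\frac{\partial}{\partial t}\lp -tK_1(Ct)\rp = CtK_0(Ct)$ to produce the boundary term $\frac{\KK(2\pi|n|\d)}{2\pi n\sqrt{\d^2+(\t+\frac dc)^2}}$ and the truncated $\KK_{\t,\frac dc}$-integral, and then pass $\d\to 0^+$ in the latter sum using the uniform $\ll |n|^{-2}$ bound obtained from rescaling and $|t^2+(\t+\frac dc)^2|\geq 2\t_2\lp\t_1+\frac dc\rp$ together with $d_j(n)\ll\sqrt{|n|}$. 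The paper phrases the final interchange as continuity in $\d\geq 0$ of the uniformly dominated series rather than as dominated convergence, but the estimates and the asymmetric treatment of the two sums are identical to yours.
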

\begin{proof}
We start by rewriting the expression for $\calU_{j,-\frac dc}$ given in Lemma \ref{L: First Mordell Lemma} as
\begin{equation*}
\calU_{j,-\frac dc}(\t)
=
-\frac{1}{\pi}
\lim_{\d \to 0^+}
\int_\d^\infty
\sum_{n\in\Z+\b_j} d_j(n)e^{-\frac{2\pi i d n}{c}}
\frac{tK_0(2\pi|n|t)}{\sqrt{t^2+\left(\t+\frac dc\right)^2}}
\left(2\pi n+\frac{i\left(\t+\frac dc\right)}{t^2+\left(\t+\frac dc\right)^2}\rp
dt.
\end{equation*}
Since for any $x>0$ we have (see e.g. 10.37.1 and 10.39.2 of \cite{NIST}),
\begin{equation*}
0 \leq K_0 (x) < K_{\frac{1}{2}} (x) = \sqrt{\frac{\pi}{2x}} e^{-x},
\end{equation*}
for $t > \d$ we can estimate
\begin{equation*}
0 \leq K_0(2\pi|n|t) < \frac{e^{-2 \pi |n| t}}{2\sqrt{|n| t}}
\leq \frac{e^{- \pi \min\{\b_j, 1-\b_j\} t} e^{- \pi |n| \d}}{2\sqrt{|n| t}} ,
\end{equation*}
where for the second bound we use $|n| \geq \min \{\b_j, 1-\b_j\}$ as $\b_j \in (0,1)$. Thus the combined integral and sum is absolutely convergent and we can switch the order for $\d > 0$. So we have
\begin{equation*}
\calU_{j,-\frac dc}(\t)
=
-\frac{1}{\pi} \lim_{\d \to 0^+} \sum_{n\in\Z+\b_j} d_j(n)e^{-\frac{2\pi i d n}{c}}
\int_\d^\infty
\frac{tK_0(2\pi|n|t)}{\sqrt{t^2+\left(\t+\frac dc\right)^2}}
\left(2\pi n+\frac{i\left(\t+\frac dc\right)}{t^2+\left(\t+\frac dc\right)^2}\rp
dt.
\end{equation*}
Integrating the term with $2 \pi n$ by parts using that $\frac{\del}{\del t} (- t K_1 (Ct)) = Ct K_0 (Ct)$ for $C>0$, we find
\begin{multline*}
\calU_{j,-\frac dc}(\t)
=
-\frac{1}{\pi} \lim_{\d \to 0^+} \sum_{n\in\Z+\b_j} d_j(n)e^{-\frac{2\pi i d n}{c}}
\lp \sgn (n) \frac{\d K_1 (2 \pi |n| \d)}{\sqrt{\d^2+\left(\t+\frac dc\right)^2}}
\right.
\\
\left.
+ \int_\d^\infty \frac{t}{\lp t^2 +\left(\t+\frac dc\right)^2 \rp^{\frac{3}{2}}}
\lp i \lp \t + \frac{d}{c} \rp K_0 (2 \pi |n| t) - \sgn (n) t K_1  (2 \pi |n| t) \rp
dt
\rp .
\end{multline*}
Due to the exponential decay of $K_1 (2 \pi |n| \d)$ as $n \to \pm \infty$, the sum on $n$ can be separated to yield
\begin{multline}\label{eq:calU_integral_twosums}
\calU_{j,-\frac dc}(\t)
=
-\frac{1}{\pi} \lim_{\d \to 0^+}
\lp
\sum_{n\in\Z+\b_j} d_j(n)e^{-\frac{2\pi i d n}{c}}
\sgn (n) \frac{\d  K_1 (2 \pi |n| \d)}{\sqrt{\d^2+\left(\t+\frac dc\right)^2}}
\right.
\\
\left.
+ \sum_{n\in\Z+\b_j} \! d_j(n)e^{-\frac{2\pi i d n}{c}} \!
\int_\d^\infty \frac{t \lp i \lp \t + \frac{d}{c} \rp K_0 (2 \pi |n| t) - \sgn (n) t K_1  (2 \pi |n| t) \rp}{\lp t^2 +\left(\t+\frac dc\right)^2 \rp^{\frac{3}{2}}}
dt
\rp .
\end{multline}
We now focus on the contribution of the second line and make the change of variables as $t \mapsto \frac{t}{|n|}$ to rewrite the integral as
\begin{equation}\label{eq:calU_integral_convergent_delta_part}
\frac{1}{n^2} \int_{\d |n|}^\infty
\frac{t}{\lp \frac{t^2}{n^2} +\left(\t+\frac dc\right)^2 \rp^{\frac{3}{2}}}
\lp i \lp \t + \frac{d}{c} \rp K_0 (2 \pi t) -\frac{t}{n} K_1  (2 \pi t) \rp
dt .
\end{equation}
Because we assume $\t_1 > -\frac{d}{c}$, we can bound
\begin{equation*}
\frac{1}{\left| \frac{t^2}{n^2} +\left(\t+\frac dc\right)^2 \right|}
=
\frac{1}{\left| \lp \frac{t^2}{n^2} + \lp \t_1 + \frac{d}{c} \rp^2 - \t_2^2 \rp
+2i \t_2 \lp \t_1 + \frac{d}{c} \rp \right|}
\leq
\frac{1}{2 \t_2 \lp \t_1 + \frac{d}{c} \rp}.
\end{equation*}
Thus for $\d \geq 0$, we have the estimates
\begin{align*}
\left| \int_{\d |n|}^\infty
\frac{i t \lp \t + \frac{d}{c} \rp K_0 (2 \pi t)}{\lp \frac{t^2}{n^2} +\left(\t+\frac dc\right)^2 \rp^{\frac{3}{2}}} dt
\right|
&\leq
\frac{\left| \t + \frac{d}{c} \right|}{\lp 2 \t_2 \lp \t_1 + \frac{d}{c} \rp \rp^{\frac{3}{2}}}
\int_0^\infty tK_0 (2 \pi t) dt,\\
\left| \int_{\d |n|}^\infty
\frac{t^2 K_1 (2 \pi t)}{\lp \frac{t^2}{n^2} +\left(\t+\frac dc\right)^2 \rp^{\frac{3}{2}}} dt
\right|
&\leq
\frac{1}{\lp 2 \t_2 \lp \t_1 + \frac{d}{c} \rp \rp^{\frac{3}{2}}}
\int_0^\infty t^2K_1 (2 \pi t) dt .
\end{align*}
Therefore, the integral \eqref{eq:calU_integral_convergent_delta_part} is uniformly $\ll \frac{1}{n^2}$ for $\d \geq 0$. Together with the bound $|d_j (n)| \ll \sqrt{|n|}$ from Corollary \ref{cor:dj_bound}, we then find that the sum over $n$ in the second line of \eqref{eq:calU_integral_twosums} defines a function of $\d$ that is continuous for $\d \geq 0$. So the limit $\d \to 0^+$ can be separately taken for this term by simply setting $\d=0$.
The desired result then holds thanks to the fact that for $\t_1 > - \frac{d}{c}$ we have
\begin{equation*}
\lim_{\d \to 0^+} \frac{1}{\sqrt{\d^2+\left(\t+\frac dc\right)^2}}
= \frac{1}{\t+\frac dc}.
\qedhere
\end{equation*}
\end{proof}

Our next goal is to let $\d \to 0^+$ in the first term of Lemma \ref{L: Second Mordell Lemma}. We see below that this limit is obtained by setting $\d=0$ (noting that $\lim_{x \to 0^+} \KK (x) = 1$) and taking the sum symmetrically. Here by a
{\it symmetric sum} over a discrete set $S$, we mean
\begin{align*}
\psum_{n \in S} := \lim_{X \to \infty} \sum_{\substack{n \in S \\ |n| \leq X}}.
\end{align*}
We first show that the first term of Lemma \ref{L: Second Mordell Lemma} is convergent with $\d=0$.

\begin{lem} \label{lem:1_over_n_term_convergence}
The series $\displaystyle\psum_{n\in\Z+\b_j} \frac{d_j(n) e^{-\frac{2\pi i d n}{c}}}{n}$ is convergent for any $c \in \IN$, $d \in \IZ$, and $j \in \{0,1,2\}$.
\end{lem}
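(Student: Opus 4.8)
The plan is to reduce the convergence of the symmetric sum to a power-saving bound on partial sums, and then to extract that bound from the lattice-sum description of the $d_j(n)$. First, it suffices to prove that each of the two one-sided series $\sum_{0<n,\,n\in\Z+\beta_j}\tfrac{d_j(n)e^{-2\pi idn/c}}{n}$ and $\sum_{n<0,\,n\in\Z+\beta_j}\tfrac{d_j(n)e^{-2\pi idn/c}}{n}$ converges, since $\psum$ is then their sum; I carry out the $n>0$ case, the $n<0$ case being entirely analogous, as by \eqref{eq:Fmu_definition} the negative-index part of $U_j$ is again of mock Maass theta type (built from the perpendicular sign conditions). After replacing $d/c$ by its reduced fraction I may assume $\gcd(c,d)=1$. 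Set $a_n:=d_j(n)e^{-2\pi idn/c}$ and $A(X):=\sum_{0<n\le X,\,n\in\Z+\beta_j}a_n$. Since $\tfrac1n$ decreases monotonically to $0$, Abel summation gives $\sum_{\beta_j<n\le X}\tfrac{a_n}{n}=\tfrac{A(X)}{X}+\int_{\beta_j}^X\tfrac{A(t)}{t^2}\,dt$, so the one-sided series converges as soon as $A(X)\ll_{c,j}X^{1-\eta}$ for some fixed $\eta>0$. Note that the trivial estimate $|d_j(n)|\ll\sqrt{|n|}$ from Corollary \ref{cor:dj_bound} only gives $A(X)\ll X^{3/2}$, so genuine cancellation must be produced.

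To bound $A(X)$ I would substitute the description of $d_j(n)$ as a false-indefinite theta coefficient from \eqref{eq:u0_u1_false_indefinite_description} and \eqref{eq:Fmu_definition}: for $n>0$, $d_j(n)$ is a fixed signed combination over $\bm\mu\in\mathcal S_j^\pm$ of the number of $\bm n\in\Z^2+\bm\mu$ with $Q(\bm n)=12n_1^2-2n_2^2=n$ lying in a fixed convex sector of $\R^2$ cut out by the sign condition. Hence $A(X)$ equals the same signed combination of the sums $\sum_{\bm n\in(\Z^2+\bm\mu)\cap\mathcal R_X}e^{-2\pi idQ(\bm n)/c}$, where $\mathcal R_X$ is a fixed-shape planar region (a dilate by $\sqrt X$ of a bounded region bounded by lines through the origin and an arc of $Q=1$), of area $\asymp X$ and boundary length $\asymp\sqrt X$. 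Decomposing $\bm n$ into residue classes modulo a fixed multiple $N$ of $c$ makes the summand constant on each class, and the number of points of $(\Z^2+\bm\mu)\cap\mathcal R_X$ in a given class is $\operatorname{vol}(\mathcal R_X)/N^2+O(\sqrt X)$ by a standard lattice-point count. Collecting terms, $A(X)=\mathcal G_j\cdot\operatorname{vol}(\mathcal R_X)/N^2+O_{c,j}(\sqrt X)$, where $\mathcal G_j$ is a fixed $\pm$-combination of two-dimensional Gauss sums attached to the cosets $\mathcal S_j^\pm$; the crucial point is that $\mathcal G_j=0$ — this is exactly the statement that $u_j$ has no constant term at the cusp $-\tfrac dc$, forced by Proposition \ref{P: U_j and u_j transformation} since $U_j$ transforms at every cusp into a combination of the $U_\ell$, all of which are constant-term-free by \eqref{eq:Uj_general_form}. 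Therefore $A(X)\ll_{c,j}\sqrt X$, which finishes the proof.

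The main obstacle is precisely the vanishing $\mathcal G_j=0$ (equivalently, the absence of a linear-in-$X$ term in $A(X)$): though conceptually forced — a nonzero value would contradict the very convergence being asserted, and it reflects the quantum modularity of $u_j$ — making it rigorous requires either the explicit verification that the specific $\pm$-combination of Gauss sums over $\mathcal S_j^\pm$ cancels (using the precise structure of these cosets), or a softer route through the additive-twist $L$-function $L_j(s):=\sum_{0<n\in\Z+\beta_j}a_n n^{-s}$. In the latter, the Mellin identity $(2\pi)^{-s}\Gamma(s)L_j(s)=\int_0^\infty u_j\!\left(-\tfrac dc+iy\right)y^{s-1}\,dy$, combined with the exponential decay of $u_j$ at $i\infty$ and the boundedness of $u_j$ near the rational $-\tfrac dc$ (read off from the Mordell-type representations of the obstruction in Lemmas \ref{L: First Mordell Lemma} and \ref{L: Second Mordell Lemma}, which show $\mathcal U_{\ell,\varrho}(iT)\to 0$ as $T\to\infty$), shows that $L_j$ is holomorphic at $s=1$ and polynomially bounded on vertical lines; a truncated Perron formula with a contour shift to $\operatorname{Re}(s)=\tfrac12$ then yields $A(X)\ll_{c,j}X^{\frac12+\varepsilon}$. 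Either way the arithmetic content is concentrated in this one point, while the Abel summation, the lattice-point remainder estimate, and the Perron truncation are routine.
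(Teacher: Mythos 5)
Your overall frame (Abel summation reducing the problem to a power-saving bound on the partial sums $A(X)=\sum_{0<n\le X} d_j(n)e^{-2\pi i dn/c}$) is reasonable, but your very first reduction — ``it suffices to prove that each of the two one-sided series converges'' — commits you to proving strictly more than the lemma asserts, and that stronger claim is exactly where your argument is left open. By Corollary \ref{cor:dj_partial_sum_bound} the one-sided partial sums restricted to a residue class $n\equiv r+\b_j\pmod c$ grow like $\CA_{j,r,c}X$ with $\CA_{j,r,c}$ in general nonzero, so your twisted one-sided sum satisfies $A(X)=\mathcal G_j X+O_c(\sqrt X)$ with $\mathcal G_j=\sum_{r}e^{-2\pi i d(r+\b_j)/c}\CA_{j,r,c}$, and one-sided convergence genuinely requires $\mathcal G_j=0$. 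You correctly identify this as the crux, but neither of your two proposed closings is complete: the ``explicit verification that the specific $\pm$-combination of Gauss sums over $\mathcal S_j^\pm$ cancels'' is not carried out (it can in fact be extracted from the constant-term bookkeeping in the proof of Proposition \ref{P: U_j and u_j transformation}, since only $\wh F_{\bm 0}$ carries a $\sqrt{\t_2}$ term, but that argument needs to be written down); and the $L$-function route does not deliver the claimed ``polynomially bounded on vertical lines'' — the Mellin identity only gives $\left|L_j(s)\right|\ll_\sigma \left|\Gamma(s)\right|^{-1}$, which grows like $e^{\pi|t|/2}$, so the contour shift in Perron is not justified without an additional convexity/functional-equation input you do not supply. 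Moreover, invoking boundedness of $u_j$ near $-\tfrac dc$ via Lemmas \ref{L: First Mordell Lemma} and \ref{L: Second Mordell Lemma} is delicate here, since the expansion of the obstruction in Proposition \ref{P: Mordell Representation Step 1} has the very symmetric sum under discussion as a coefficient.

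The paper avoids the issue entirely by never separating the two half-sums. It keeps the principal-value structure, writes $n=cm+r+\b_j$ and pairs $m\leftrightarrow -m-1$ within each residue class as in \eqref{eq:symmetric_sum_rewriting}, and then applies summation by parts. The point is that the positive-$n$ and negative-$n$ partial sums in a fixed class have the \emph{same} linear coefficient $\CA_{j,r,c}$ — this is the equal-area lattice count of Lemma \ref{lem:a_mu_partial_sum_bound}, where both hyperbolic sectors have area constant $\CA$ — so in the paired difference \eqref{eq:dj_over_n_partial_sum_summation_by_parts} the $O(1/m)$ contributions cancel and the summand is $O(m^{-3/2})$, which is summable. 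No vanishing of any Gauss sum is needed, and the paper never claims (nor needs) convergence of the one-sided series. If you want to salvage your route, you must actually prove $\mathcal G_j=0$; otherwise you should restructure around the symmetric pairing.
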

\begin{proof}
For $M \in \IN$, we consider the partial sums
\begin{equation}\label{eq:1_over_n_term_partial_sum_M}
\sum_{\substack{n \in \IZ+\b_j \\ -c(M+1)+\b_j \leq n \leq c(M+1)-1+\b_j}}
\frac{d_j (n)  e^{-\frac{2\pi idn}c}}n.
\end{equation}
By Corollary \ref{cor:dj_bound}, the summand vanishes as $n \to \pm \infty$ and up to $O(c)$ terms on the boundary, the limit $M \to \infty$ of the partial sum \eqref{eq:1_over_n_term_partial_sum_M}, if it exists, is equivalent to the convergence of the symmetric sum in this lemma and the value of the limit $M \to \infty$ gives its value.
We start by changing variables as $n = cm+r+\b_j$ with $m \in \{-M-1,\ldots,M \}$ and $r \in \{0,1,\ldots,c-1\}$ to rewrite \eqref{eq:1_over_n_term_partial_sum_M} as
\begin{multline}
\frac{1}{c} \sum_{r=0}^{c-1} e^{-\frac{2\pi i d}{c} (r+\b_j)}
\sum_{m=-M-1}^M \frac{d_j (cm+r+\b_j)}{m + \frac{r+\b_j}{c}}
\\
=
\frac{1}{c} \sum_{r=0}^{c-1} e^{-\frac{2\pi i d}{c} (r+\b_j)}
\sum_{m=0}^M \lp \frac{d_j (cm+r+\b_j)}{m + \frac{r+\b_j}{c}}
- \frac{d_j (-c(m+1)+r+\b_j)}{m + \frac{c-r-\b_j}{c}} \rp,
\label{eq:symmetric_sum_rewriting}
\end{multline}
where we change $m \mapsto -m-1$ for the sum over $-M-1\le m\le-1$.
Now we use summation by parts to the sum over $m$ and obtain
\begin{align}
&\sum_{m=0}^{M-1} \lp
\frac{\displaystyle \sum_{n=0}^m d_j (cn+r+\b_j)}{\lp m + \frac{r+\b_j}{c} \rp \lp m + 1 + \frac{r+\b_j}{c} \rp}
- \frac{\displaystyle \sum_{n=0}^m d_j (-c(n+1)+r+\b_j)}{\lp m + \frac{c-r-\b_j}{c} \rp \lp m + 1 + \frac{c-r-\b_j}{c} \rp} \rp
\notag \\ & \qquad \qquad
+ \frac{1}{M + \frac{r+\b_j}{c}} \sum_{n=0}^M d_j (cn+r+\b_j)
- \frac{1}{M + \frac{c-r-\b_j}{c}} \sum_{n=0}^M d_j (-c(n+1)+r+\b_j) .
\label{eq:dj_over_n_partial_sum_summation_by_parts}
\end{align}
By Corollary \ref{cor:dj_partial_sum_bound}, for $m \geq c$ we have (note that $0< \frac{r+\b_j}{c} < 1$)
\begin{align}
\sum_{n=0}^m d_j (cn+r+\b_j) &= c \CA_{j,r,c} \lp m + \frac{r+\b_j}{c} \rp
+ O \lp c^{\frac{3}{2}} \sqrt{m} \rp,
\label{eq:dj_partial_sum_bound1}
\\
\sum_{n=0}^md_j (-c(n+1)+r+\b_j) &= c \CA_{j,r,c} \lp m + \frac{c-r-\b_j}{c} \rp
+ O \lp c^{\frac{3}{2}} \sqrt{m} \rp,
\label{eq:dj_partial_sum_bound2}
\end{align}
with the implied constants on the error terms independent of $j$ and $r$.
In particular, for $M \geq c$ the second line of equation \eqref{eq:dj_over_n_partial_sum_summation_by_parts} is bounded as
\begin{equation*}
\lp c \CA_{j,r,c} + \frac{O\!\lp c^{\frac{3}{2}} \sqrt{M} \rp}{M + \frac{r+\b_j}{c}} \rp
-
\lp c \CA_{j,r,c} + \frac{O\!\lp c^{\frac{3}{2}} \sqrt{M} \rp}{M + \frac{c-r-\b_j}{c}} \rp \to 0 \quad \text{as $M\to\infty$}.
\end{equation*}

We next consider the summand on the first line of
\eqref{eq:dj_over_n_partial_sum_summation_by_parts} for $m \geq c$ and estimate it as
\begin{equation*}
\resizebox{1\hsize}{!}{$\dfrac{c \CA_{j,r,c} \lp 1 - 2 \dfrac{r+\b_j}{c} \rp}{\lp m + 1 + \dfrac{r+\b_j}{c} \rp \lp m + 1 + \dfrac{c-r-\b_j}{c} \rp}
+ \dfrac{O\!\lp c^{\frac{3}{2}} \sqrt{m} \rp}{\lp m + \dfrac{r+\b_j}{c} \rp \lp m + 1 + \dfrac{r+\b_j}{c} \rp}
+ \dfrac{O\!\lp c^{\frac{3}{2}} \sqrt{m} \rp}{\lp m + \dfrac{c-r-\b_j}{c} \rp \lp m + 1 + \dfrac{c-r-\b_j}{c} \rp} .$}
\end{equation*}
So overall the summand is $\ll m^{-\frac{3}{2}}$ as $m \to \infty$ and hence the sum on the first line of equation \eqref{eq:dj_over_n_partial_sum_summation_by_parts} is convergent as $M \to \infty$. This proves the required convergence.
\end{proof}

Given the convergence of the symmetric sum of Lemma \ref{lem:1_over_n_term_convergence}, we now take the limit $\d \to 0^+$ in the first term of Lemma \ref{L: Second Mordell Lemma} and show that this symmetric sum is the resulting limit.
\begin{lem}\label{lem:symmetric_sum_limit}
For $c \in \IN$, $d \in \IZ$, and $j \in \{0,1,2\}$ we have
\begin{equation*}
\lim_{\d \to 0^+}
\sum_{n\in\Z+\b_j} \frac{d_j(n)e^{-\frac{2\pi i d n}{c}}}{n}
\KK (2 \pi |n| \d)
=
\psum_{n\in\Z+\b_j} \frac{d_j(n)e^{-\frac{2\pi i d n}{c}}}{n} .
\end{equation*}
\end{lem}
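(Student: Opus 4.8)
The plan is to use an Abel-summation (summation-by-parts) argument against the partial sums of $d_j$, exactly as in the proof of Lemma \ref{lem:1_over_n_term_convergence}, but now keeping the factor $\KK(2\pi|n|\d)$ along for the ride and tracking its effect uniformly in $\d$. The key analytic input about $\KK(x) = xK_1(x)$ is that it is smooth on $[0,\infty)$ with $\KK(0)=1$, that it decays exponentially as $x\to\infty$, and that $\KK'(x) = -xK_0(x)$ is integrable and bounded near $0$ (indeed $\KK'(x)\ll x\log(2/x)$ as $x\to 0^+$). Thus $\KK$ has bounded variation on $[0,\infty)$: $\int_0^\infty |\KK'(x)|\,dx < \infty$. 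First I would split the $n$-sum over $\IZ+\b_j$ into the residue classes $n = cm+r+\b_j$ with $r\in\{0,\dots,c-1\}$ and $m\in\IZ$, fold $m\mapsto -m-1$ on the negative side exactly as in \eqref{eq:symmetric_sum_rewriting}, and thereby reduce to showing, for each fixed $r$, that
\begin{align*}
\lim_{\d\to 0^+}\sum_{m=0}^\infty \left(\frac{d_j(cm+r+\b_j)\,\KK(2\pi(cm+r+\b_j)\d)}{m+\frac{r+\b_j}{c}} - \frac{d_j(-c(m+1)+r+\b_j)\,\KK(2\pi(c(m+1)-r-\b_j)\d)}{m+\frac{c-r-\b_j}{c}}\right)
\end{align*}
equals the same expression with $\KK$ replaced by $1$. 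Write $e_m := d_j(cm+r+\b_j)$ and $\widetilde e_m := d_j(-c(m+1)+r+\b_j)$, with partial sums $E_m := \sum_{n=0}^m e_n$ and $\widetilde E_m := \sum_{n=0}^m \widetilde e_n$; by Corollary \ref{cor:dj_partial_sum_bound} (cf.\ \eqref{eq:dj_partial_sum_bound1}--\eqref{eq:dj_partial_sum_bound2}) one has $E_m = c\CA_{j,r,c}(m+\frac{r+\b_j}{c}) + O(\sqrt m)$ and similarly for $\widetilde E_m$, uniformly in $j,r$.

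The second step is the summation by parts. After summing by parts in $m$, each sum becomes a boundary term plus a sum over $m$ of $E_m$ (resp.\ $\widetilde E_m$) times the forward difference of the coefficient $g_\d(m) := \KK(2\pi(cm+r+\b_j)\d)/(m+\frac{r+\b_j}{c})$ (and its tilde analogue). The forward difference $g_\d(m+1)-g_\d(m)$ splits into two pieces by the product rule: the piece where the $1/(m+\cdot)$ factor is differenced contributes $\ll \KK(\cdots)/m^2 \ll 1/m^2$ (using $|\KK|\le 1$), summably and uniformly in $\d$; the piece where $\KK$ is differenced contributes $\ll \frac{1}{m}\,|\KK(2\pi(c(m+1)+\cdots)\d) - \KK(2\pi(cm+\cdots)\d)| \le \frac{1}{m}\int_{2\pi(cm+\cdots)\d}^{2\pi(c(m+1)+\cdots)\d}|\KK'(x)|\,dx$. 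Pairing this against $|E_m|\ll m$ (the linear growth!), the $1/m$ cancels the $m$, and we are left with $\sum_m \int_{I_m(\d)}|\KK'(x)|\,dx$ where the intervals $I_m(\d) = [2\pi(cm+\cdots)\d, 2\pi(c(m+1)+\cdots)\d]$ have uniformly bounded overlap (each point of $[0,\infty)$ lies in $O(1)$ of them), so this is $\ll \int_0^\infty|\KK'(x)|\,dx < \infty$. This gives a dominating summable bound, uniform in $\d\in(0,1]$, so we may apply dominated convergence for series: as $\d\to 0^+$, $g_\d(m)\to 1/(m+\frac{r+\b_j}{c})$ and its difference converges termwise, and the boundary terms converge by the same estimates (the $m=M$, $M\to\infty$ boundary term vanishes using $E_M\ll M$ against $g_\d(M)\ll 1/M$, uniformly, and the $m=0$ term converges since $\KK(2\pi(r+\b_j)\d)\to 1$). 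Hence the limit equals the $\KK\equiv 1$ version, which by the argument of Lemma \ref{lem:1_over_n_term_convergence} is precisely $\psum_{n\in\Z+\b_j} d_j(n)e^{-2\pi idn/c}/n$.

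The main obstacle is making the dominated-convergence step genuinely uniform in $\d$: one must be careful that the linear growth $|E_m|\ll m$ does \emph{not} overwhelm the $1/m$ from differencing $\KK$, and this works only because the \emph{increment} of $\KK$ across one step is small — quantitatively controlled by $\int|\KK'|$ over a short interval that shrinks with $\d$ — and because the leading linear part $c\CA_{j,r,c}\,m$ of $E_m$ telescopes against the analogous piece of $\widetilde E_m$ (the symmetric pairing of the two sums is essential; neither sum converges on its own). A clean way to organize this is to subtract and add the term $c\CA_{j,r,c}\,m\cdot(g_\d(m+1)-g_\d(m))$, handling the telescoping leading part in closed form and bounding the $O(\sqrt m)$ remainder against the difference of $\KK$-factors, where now $\sqrt m \cdot \frac{1}{m}\int_{I_m(\d)}|\KK'| \ll \frac{1}{\sqrt m}\int_{I_m(\d)}|\KK'|$ is even more comfortably summable. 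Once this bookkeeping is set up, the rest is routine.
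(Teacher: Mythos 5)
Your argument is correct in outline, but it is organized quite differently from the paper's. The paper does not split into residue classes at all for this step: writing $B_\d:=\sum_n b(n)\KK(2\pi|n|\d)$ and $B(x):=\sum_{|n|\le x}b(n)$, it first proves the integral summation-by-parts identity $B_\d=2\pi\d\int_0^\infty B(x)\,x\,K_0(2\pi\d x)\,dx$ (using only the $O(\sqrt x)$ bound on the one-sided partial sums to justify convergence and a telescoping over the jump points of $B$), and then observes that $2\pi\d\, xK_0(2\pi\d x)\,dx$ is a nonnegative kernel of total mass $1$; since Lemma \ref{lem:1_over_n_term_convergence} already gives $B(x)\to B$, the conclusion $B_\d\to B$ is a two-line Abelian theorem (split the integral at a threshold $X_\e$ beyond which $|B(x)-B|<\e$). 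Your route instead re-enters the residue-class decomposition of Lemma \ref{lem:1_over_n_term_convergence}, performs discrete Abel summation against $E_m,\widetilde E_m$, and controls the $\d$-dependence through the bounded variation of $\KK$ together with an explicit cancellation of the linear main terms $c\CA_{j,r,c}\,m$ between the two paired sums. That cancellation is genuinely needed (as you note, the linear part of $E_m$ against the $1/m^2$ difference of $1/(m+\cdot)$ alone gives a non-summable $1/m$), and your plan to subtract it off and handle it in closed form, bounding the $O(\sqrt m)$ remainder by $m^{-1/2}\int_{I_m(\d)}|\KK'|$ with the intervals $I_m(\d)$ having bounded overlap, does close the argument; one should phrase the final limit interchange as a uniform-tail (equisummability) estimate rather than literal dominated convergence, since the dominating terms themselves depend on $\d$. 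The trade-off: the paper's proof is shorter and reuses Lemma \ref{lem:1_over_n_term_convergence} as a black box, exploiting positivity of $xK_0(x)$ so that no cancellation bookkeeping is required; yours is more quantitative and would yield a rate of convergence in $\d$, at the cost of substantially heavier bookkeeping.
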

\begin{proof}
We define $b(n) := \frac{d_j(n)}{n} e^{-\frac{2\pi i d n}{c}}$ for $n \in \IZ + \b_j$ and
\begin{equation*}
B_{\delta} := \sum_{n\in\Z+\b_j} b(n)  \KK (2 \pi |n| \d),
\qquad
B := \psum_{n\in\Z+\b_j} b(n),
\andd
B(x) := \sum_{\substack{n \in \IZ + \b_j \\ |n| \leq x}} b(n).
\end{equation*}
Our goal is to prove that $\lim_{\d \to 0^+} B_{\d} = B$.
The first step towards that is the identity 
\begin{equation}\label{eq:s_delta_summation_by_parts}
B_{\d} = 2 \pi \d \int_0^\infty B(x)  \KK_0 (2 \pi \d x)  dx,
\end{equation}
where $\KK_0 (x) := - \KK' (x) = x  K_0 (x)$.
To prove \eqref{eq:s_delta_summation_by_parts}, we split $B(x) = B^{[1]} (x) + B^{[2]} (x)$, where
\begin{equation*}
B^{[1]} (x) :=  \sum_{\substack{n \in \IZ + \b_j \\ 0 < n \leq x}} b(n)
\andd
B^{[2]} (x) :=  \sum_{\substack{n \in \IZ + \b_j \\ -x \leq n < 0}} b(n) .
\end{equation*}
By Corollary \ref{cor:dj_bound}, we have
$B^{[1]} (x), B^{[2]} (x) \ll \sqrt{x}$, so we can split the right-hand side of \eqref{eq:s_delta_summation_by_parts} as
\begin{equation}\label{eq:s_delta_summation_by_parts_two_integrals}
2 \pi \d \int_0^\infty B^{[1]}(x)  \KK_0 (2 \pi \d x)  dx
+
2 \pi \d \int_0^\infty B^{[2]}(x)  \KK_0 (2 \pi \d x)  dx.
\end{equation}
Both integrals converge due to the exponential decay of $\KK_0 (x)$ as $x \to \infty$.
Focusing on the first integral of \eqref{eq:s_delta_summation_by_parts_two_integrals}, we note that $B^{[1]} (x)$ is zero near $x=0$ and has jump discontinuities at $\b_j + \mathbb{N}_0$ (recall $0<\b_j<1$). Noting $B^{[1]} (n+\b_j-1) = B^{[1]} (n+\b_j) - b(n+\b_j)$ we rewrite this integral as
\begin{multline*}
2 \pi \d \sum_{n\geq1} B^{[1]} (n+\b_j-1) \int_{n+\b_j-1}^{n+\b_j} \KK_0 (2 \pi \d x)  dx
\\ \quad
=
\sum_{n\geq1}  \Bigg( B^{[1]} (n+\b_j-1) \KK(2 \pi \d (n+\b_j-1))
- B^{[1]} (n+\b_j)  \KK(2 \pi \d (n+\b_j))
+ b(n+\b_j) \KK(2 \pi \d (n+\b_j))  \Bigg).
\end{multline*}
The first two terms telescope to yield $B^{[1]} (\b_j) \KK(2 \pi \d \b_j)$. Together with the third term we then find
\begin{align*}
2\pi \delta \int_0^\infty B^{[1]}(x) \mathcal{K}_0 \!\lp 2\pi \delta x \rp dx = \sum_{\substack{n \in \Z + \b_j \\ n > 0}} b(n) \mathcal{K}\!\lp 2\pi |n| \delta \rp .
\end{align*}
The analogous result for $B^{[2]} (x)$ then proves the identity \eqref{eq:s_delta_summation_by_parts}.

Now note that because $\lim_{x \to 0^+} \KK (x) = 1$ and $\lim_{x \to \infty} \KK (x) = 0$, we have
\begin{equation*}
2 \pi \d \int_0^\infty \KK_0 (2 \pi \d x)  dx = 1 .
\end{equation*}
Hence by equation \eqref{eq:s_delta_summation_by_parts}
\begin{equation*}
B_{\d} - B
= 2 \pi \d \int_0^\infty (B(x)-B) \KK_0 (2 \pi \d x)  dx .
\end{equation*}
Since $B = \lim_{x \to \infty} B(x)$ for any $\e > 0$ we can choose $X_\e > 0$ such that
$|B(x) -B| < \e$ for all $x > X_\e$. Then we have (noting that $\KK_0 (x) >0$ for $x>0$)
\begin{equation*}
|B_{\d} - B| <  2 \pi \d \int_0^{X_\e} |B(x)-B|  \KK_0 (2 \pi \d x)  dx
+ 2 \pi \d \e \int_{X_\e}^\infty \KK_0 (2 \pi \d x)  dx.
\end{equation*}
Letting $\KK_{0,\max}:=\max\{\mathcal{K}_0(x): x>0\}$ (note $\KK_0 (x) \to 0$ as $x \to 0^+$ and as $x \to \infty$) and bounding
\begin{equation*}
2 \pi \d \int_{X_\e}^\infty \KK_0 (2 \pi \d x)  dx
\leq 2 \pi \d \int_0^\infty \KK_0 (2 \pi \d x)  dx = 1,
\end{equation*}
we find that
\begin{equation*}
|B_{\d} - B| < 2 \pi \d \KK_{0,\max} \int_0^{X_\e} |B(x)-B| dx + \e .
\end{equation*}
If $\d$ is sufficiently small, then the first term is smaller than $\e$, which concludes our proof.
\end{proof}

For use in Section \ref{sec:bounds_nonprincipal_parts}, we next examine how large these symmetric sums can get.
\nolisttopbreak
\begin{lem} \label{lem:1_over_n_term_bound}
For $c \in \IN$, $d \in \IZ$, $j \in \{0,1,2\}$ we have, with the implied constant independent of $d$,
\begin{equation*}
\psum_{n\in\Z+\b_j} \frac{d_j(n)e^{-\frac{2\pi i d n}{c}}}{n} \ll c.
\end{equation*}
\end{lem}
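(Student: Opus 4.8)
The plan is to reuse the decomposition of this symmetric sum developed in the proof of Lemma \ref{lem:1_over_n_term_convergence}, now keeping careful track of the dependence on $c$. Recall from \eqref{eq:symmetric_sum_rewriting}--\eqref{eq:dj_over_n_partial_sum_summation_by_parts} that, writing $n = cm + r + \b_j$ with $0 \le r \le c-1$, folding the negative part via $m \mapsto -m-1$, and summing by parts in $m$ (the boundary term vanishing as $M \to \infty$), the symmetric sum equals
\begin{equation*}
\frac{1}{c} \sum_{r=0}^{c-1} e^{-\frac{2\pi i d}{c}(r+\b_j)} \sum_{m \ge 0} \lp \frac{\sum_{n=0}^m d_j(cn+r+\b_j)}{\lp m + \frac{r+\b_j}{c}\rp\lp m+1+\frac{r+\b_j}{c}\rp} - \frac{\sum_{n=0}^m d_j(-c(n+1)+r+\b_j)}{\lp m + \frac{c-r-\b_j}{c}\rp\lp m+1+\frac{c-r-\b_j}{c}\rp} \rp .
\end{equation*}
The exponential factor is unimodular, so it suffices to show the inner sum over $m$ is $\ll c$ uniformly in $r \in \{0,\dots,c-1\}$, in $j \in \{0,1,2\}$, and in $d$; the triangle inequality over the $c$ residues $r$ and the prefactor $\frac1c$ then give the bound, \emph{without} any cancellation in the $r$-sum.

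To estimate the inner sum I would split it at $m = c$. For $m \ge c$, Corollary \ref{cor:dj_partial_sum_bound} applies to both inner partial sums, and the two main terms combine and telescope to
\begin{equation*}
\CA_{j,r,c}\,\lp c-2(r+\b_j)\rp \sum_{m \ge c} \frac{1}{\lp m+1+\frac{r+\b_j}{c}\rp\lp m+1+\frac{c-r-\b_j}{c}\rp} ,
\end{equation*}
which is $\ll 1$ because $\sum_{m \ge c}(m+1)^{-2} \ll c^{-1}$, $|c-2(r+\b_j)| < c$, and $\CA_{j,r,c} \ll 1$ (immediate from \eqref{eq:dj_partial_sum_bound1} evaluated at $m \asymp c$ together with $|d_j(n)| \ll \sqrt{|n|}$ from Corollary \ref{cor:dj_bound}). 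The error terms in \eqref{eq:dj_partial_sum_bound1}--\eqref{eq:dj_partial_sum_bound2} contribute $\ll \sum_{m \ge c} c^{\frac{3}{2}}\sqrt{m}\,m^{-2} \ll c^{\frac{3}{2}}c^{-\frac{1}{2}} = c$.

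For $0 \le m < c$, where the asymptotics of Corollary \ref{cor:dj_partial_sum_bound} are not available, I would use only $|d_j(n)| \ll \sqrt{|n|}$. When $1 \le m \le c-1$ the arguments obey $|cn+r+\b_j|,\,|-c(n+1)+r+\b_j| \ll c(n+1)$ for $0 \le n \le m$, so the inner partial sums are $\ll \sqrt{c}\,m^{\frac{3}{2}}$ while the denominators are $\gg m^2$; these summands are thus $\ll \sqrt{c}\,m^{-\frac{1}{2}}$ and sum to $\ll \sqrt{c}\cdot\sqrt{c} = c$. The term $m = 0$ is bounded directly: its numerators are the single coefficients $d_j(r+\b_j)$ and $d_j(-c+r+\b_j)$, of sizes $\ll \sqrt{r+\b_j}$ and $\ll \sqrt{c-r-\b_j}$, against denominators $\gg (r+\b_j)/c$ and $\gg (c-r-\b_j)/c$, so it is $\ll c/\sqrt{r+\b_j} + c/\sqrt{c-r-\b_j} \ll c$ (the $\b_j$ being fixed numbers in $(0,1)$). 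Collecting the three pieces gives the inner sum $\ll c$, uniformly; since $d$ enters only through the unimodular factor and every bound used is uniform over $j \in \{0,1,2\}$, the implied constant is independent of $d$, as required.

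The step I expect to be the main obstacle is the short range $0 \le m < c$: there the sharp arithmetic-progression asymptotic of Corollary \ref{cor:dj_partial_sum_bound} is not yet in force, so one must fall back on the crude bound $|d_j(n)| \ll \sqrt{|n|}$ and verify that the $\sqrt{c}$-savings it provides, together with $\sum_{m < c} m^{-\frac{1}{2}} \asymp \sqrt{c}$, keep this range's contribution at $O(c)$ rather than larger. The long range $m \ge c$ is essentially a $c$-uniform rerun of the estimates already performed for Lemma \ref{lem:1_over_n_term_convergence}.
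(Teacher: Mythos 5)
Your proposal is correct and follows essentially the same route as the paper: the same residue-class decomposition \eqref{eq:symmetric_sum_rewriting}, the same split of the $m$-sum at $m=c$, summation by parts with the telescoped main term controlled by Corollary \ref{cor:dj_partial_sum_bound} on the long range, and the crude bound $|d_j(n)|\ll\sqrt{|n|}$ from Corollary \ref{cor:dj_bound} on the short range. The only cosmetic difference is that you sum by parts over the whole range before splitting (so the paper's boundary terms $I_3$, $I_4$ are absorbed into your short-range estimate), which changes nothing of substance.
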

\begin{proof}
We follow equation \eqref{eq:symmetric_sum_rewriting} and start with
\begin{equation*}
\psum_{n\in\Z+\b_j} \frac{d_j(n)e^{-\frac{2\pi i d n}{c}}}{n}
=
\frac{1}{c} \sum_{r=0}^{c-1} e^{-\frac{2\pi id}{c} (r+\b_j)}
\sum_{m\ge0} \lp \frac{d_j (cm+r+\b_j)}{m + \frac{r+\b_j}{c}}
- \frac{d_j (-c(m+1)+r+\b_j)}{m + \frac{c-r-\b_j}{c}} \rp .
\end{equation*}
We split the sum over $m$ into a sum over $0 \leq m \leq c$ and a sum over $m \geq c+1$.
Applying summation by parts on partial sums of the contribution from $m \geq c+1$
and noting the bounds in \eqref{eq:dj_partial_sum_bound1} and \eqref{eq:dj_partial_sum_bound2} we rewrite this as
\begin{equation*}
\frac{1}{c} \sum_{r=0}^{c-1} e^{-\frac{2\pi i d}{c} (r+\b_j)}
\lp I_1 + I_2 + I_3 + I_4 + I_5 + I_6 + I_7 \rp,
\end{equation*}
where
\begin{align*}
I_1 &:= \sum_{m=0}^c \frac{d_j (cm+r+\b_j)}{m + \frac{r+\b_j}{c}}, \quad I_2 := - \sum_{m=0}^c \frac{d_j (-c(m+1)+r+\b_j)}{m + \frac{c-r-\b_j}{c}}, \quad I_3 := - \frac{\sum_{n=0}^c d_j (cn+r+\b_j)}{c +1 + \frac{r+\b_j}{c}}
\\
I_4 &:= \frac{\sum_{n=0}^c d_j (-c(n+1)+r+\b_j)}{c + 1 + \frac{c-r-\b_j}{c}}, \quad I_5 := \sum_{m\ge c+1} \frac{\sum_{n=0}^m d_j (cn+r+\b_j) - c \CA_{j,r,c} \lp m + \frac{r+\b_j}{c} \rp}{\lp m + \frac{r+\b_j}{c} \rp \lp m +1 + \frac{r+\b_j}{c} \rp},
\\
I_6 &:= - \sum_{m\ge c+1} \frac{\sum_{n=0}^m d_j (-c(n+1)+r+\b_j) - c \CA_{j,r,c} \lp m + \frac{c-r-\b_j}{c} \rp}{\lp m + \frac{c-r-\b_j}{c} \rp \lp m + 1 + \frac{c-r-\b_j}{c} \rp},
\\
I_7 &:= c  \CA_{j,r,c} \lp 1 - 2 \frac{r+\b_j}{c} \rp
\sum_{m\ge c+1} \frac{1}{\lp m +1 + \frac{r+\b_j}{c} \rp \lp m + 1 + \frac{c-r-\b_j}{c} \rp}
\end{align*}
with the involved series convergent.
The lemma follows if we show that $I_j=O(c)$ for $j\in\{1,\dots,7\}$ with an implied constant independent of $r$.
For $I_1$, $I_2$, this follows from Corollary~\ref{cor:dj_bound}.
For $I_3$, $I_4$, we use \eqref{eq:dj_partial_sum_bound1} and \eqref{eq:dj_partial_sum_bound2}, respectively, with $m=c$ and note that $|\CA_{j,r,c}|\ll1$.
The bounds on $I_5$, $I_6$ also follow from \eqref{eq:dj_partial_sum_bound1} and \eqref{eq:dj_partial_sum_bound2}, giving
\begin{equation*}
|I_{5}|, |I_{6}| \ll  c^{\frac{3}{2}} \sum_{m\ge c+1} m^{-\frac{3}{2}} \ll c .
\end{equation*}
Finally we bound $|I_7|\ll 1$.
\end{proof}
We now arrive at the following midway point to our desired Mordell-type representation.

\begin{prop} \label{P: Mordell Representation Step 1}
Let $- \frac dc \in \Q$ and $\t \in \H$ be such that $\t_1 > -\frac dc$. Then for $j \in \{ 0,1,2 \}$ we have
\begin{equation*}
\calU_{j,-\frac dc}(\t)
=
-\frac{1}{2\pi^2 \lp \t + \frac{d}{c} \rp} \
\psum_{n\in\Z+\b_j} \frac{d_j(n)e^{-\frac{2\pi i d n}{c}}}{n}
- \frac{1}{\pi} \sum_{n\in\Z+\b_j} d_j(n)e^{-\frac{2\pi i d n}{c}}
\KK_{\t,\frac dc}(n).
\end{equation*}
\end{prop}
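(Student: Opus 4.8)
The plan is to obtain this as an immediate consequence of Lemma~\ref{L: Second Mordell Lemma} together with Lemmas~\ref{lem:1_over_n_term_convergence} and~\ref{lem:symmetric_sum_limit}, with no new analytic input required. First I would note that, since $-\frac dc$ is rational, we may assume without loss of generality that $c \in \IN$ and $d \in \IZ$ (taking the representative with positive denominator); one checks that each of $\calU_{j,-\frac dc}(\t)$, the symmetric sum $\psum_{n\in\Z+\b_j}\frac{d_j(n)e^{-\frac{2\pi i d n}{c}}}{n}$, and the quantities $\KK_{\t,\frac dc}(n)$ depends only on the rational number $-\frac dc$ (and $\t$), so this normalization is harmless and the hypotheses of all three cited lemmas (in particular $\t_1 > -\frac dc$) are met.

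Next I would invoke Lemma~\ref{L: Second Mordell Lemma}, which already expresses
\[
\calU_{j,-\frac dc}(\t)
= -\frac{1}{2\pi^2 \lp \t+\frac dc \rp} \lim_{\d\to0^+} \sum_{n\in\Z+\b_j} \frac{d_j(n)e^{-\frac{2\pi i d n}{c}}}{n}\,\KK(2\pi|n|\d)
\;-\; \frac1\pi \sum_{n\in\Z+\b_j} d_j(n)e^{-\frac{2\pi i d n}{c}}\,\KK_{\t,\frac dc}(n).
\]
The second sum here is verbatim the second term in the proposition, so it only remains to evaluate the $\d\to0^+$ limit in the first term. By Lemma~\ref{lem:1_over_n_term_convergence} the symmetric sum $\psum_{n\in\Z+\b_j}\frac{d_j(n)e^{-\frac{2\pi i d n}{c}}}{n}$ converges, hence Lemma~\ref{lem:symmetric_sum_limit} applies and identifies that limit with this symmetric sum. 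Substituting yields the asserted identity.

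The genuinely delicate steps --- the $\d$-regularization justifying the interchange of sum and integral in Lemma~\ref{L: Second Mordell Lemma}, and the Abel-summation argument evaluating the $\d\to0^+$ limit in Lemma~\ref{lem:symmetric_sum_limit} --- have all been carried out already, so there is no real obstacle at this stage: this proposition is simply the bookkeeping step recording the midway form of the Mordell-type representation. The only points needing minor care are maintaining the convention that $c$ is a positive integer and tacitly using the bound $|d_j(n)| \ll \sqrt{|n|}$ together with the partial-sum asymptotics of Corollaries~\ref{cor:dj_bound} and~\ref{cor:dj_partial_sum_bound} that underpin those lemmas.
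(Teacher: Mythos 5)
Your proposal is correct and matches the paper exactly: the proposition is stated there as the immediate ``midway point'' obtained by substituting the symmetric-sum evaluation of Lemma~\ref{lem:symmetric_sum_limit} (whose applicability rests on the convergence established in Lemma~\ref{lem:1_over_n_term_convergence}) into the first term of Lemma~\ref{L: Second Mordell Lemma}, with no further argument given. Your remark about normalizing $c\in\IN$, $d\in\IZ$ is a harmless bookkeeping point consistent with the conventions already in force in those lemmas.
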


\subsection{Mordell type representation of the modular transformation}
We next rewrite $\KK_{\t,\frac dc}(n)$. In Proposition \ref{P: Mordell Representation Step 1} we restrict to $\t_1 > -\frac dc$ because we are using the principal value of the square roots appearing in the definition of $\KK_{\t,\frac dc}(n)$. Our rewriting below naturally continues the obstruction to modularity to the entire $\mathbb{H}$ as stated in Remark \ref{rem:extension_obstruction}. Here we require the {\it sine and cosine integrals}
\begin{align*}
\Si(w) := \int_0^w \frac{\sin(t)}{t} dt \ \ \mbox{ for } w \in  \IC,
\qquad \Ci(w) := - \int_w^\infty \frac{\cos(t)}{t} dt
\ \ \mbox{ for } w \in \IC \setminus (-\infty,0].
\end{align*}
Then for $w \in \IC \setminus (-\infty,0]$, we set
\begin{align*}
f(w) := \Ci(w) \sin(w) + \cos(w)\lp \dfrac{\pi}{2} - \Si(w) \rp, \ \ \
g(w) := - \Ci(w) \cos(w) + \sin(w) \lp \dfrac{\pi}{2} - \Si(w) \rp.
\end{align*}

\begin{lem}\label{lem:modularity_obstruction_with_f_g}
For $\t \in \IH$ with $\t_1 > - \frac{d}{c}$ and $n \neq 0$ we have
\begin{align*}
\KK_{\t,\frac dc}(n) = \sgn(n) f\!\lp 2\pi |n| \!\lp \t + \frac dc \rp \rp
+ i g\!\lp 2\pi |n| \!\lp \t + \frac dc \rp \rp - \dfrac{1}{2\pi n \lp \t + \frac dc \rp}.
\end{align*}
\end{lem}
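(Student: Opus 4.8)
My plan is to reduce the identity, by a change of variables, to two classical Bessel‑function integrals and then establish those by an elementary ODE argument. First I would substitute $u=2\pi|n|t$ in \eqref{eq:KK Definition} and set $w:=2\pi|n|\lp\t+\tfrac dc\rp$; since $\t_1>-\tfrac dc$ and $\t_2>0$, the point $w$ lies in the open first quadrant, so $w\notin(-\infty,0]$ and hence the principal square roots, $f(w)$, and $g(w)$ are all well defined. With
\begin{equation*}
J_0(w):=\int_0^\infty\frac{uK_0(u)}{\lp u^2+w^2\rp^{3/2}}\,du,\qquad
J_1(w):=\int_0^\infty\frac{u^2K_1(u)}{\lp u^2+w^2\rp^{3/2}}\,du,
\end{equation*}
a direct computation in which the powers of $2\pi|n|$ cancel gives $\KK_{\t,\frac dc}(n)=iwJ_0(w)-\sgn(n)J_1(w)$, while $\tfrac{1}{2\pi n\lp\t+\frac dc\rp}=\tfrac{\sgn(n)}{w}$. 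Thus the lemma is equivalent to the two identities $J_0(w)=\tfrac{g(w)}{w}$ and $J_1(w)=\tfrac1w-f(w)$ for $w_1>0$, because these give $\KK_{\t,\frac dc}(n)=ig(w)-\sgn(n)\lp\tfrac1w-f(w)\rp=\sgn(n)f(w)+ig(w)-\tfrac{\sgn(n)}{w}$.

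To prove these, I would introduce the auxiliary function $\Phi(w):=\int_0^\infty\tfrac{uK_0(u)}{\sqrt{u^2+w^2}}\,du$, which is holomorphic on $\{w_1>0\}$, and show that it satisfies the inhomogeneous ODE $\Phi''+\Phi=\tfrac1w$. Differentiating under the integral sign — legitimate since $K_0$ decays exponentially and $u^2+w^2$ avoids $(-\infty,0]$ locally uniformly — gives $\Phi'(w)=-wJ_0(w)$. Two integrations by parts then connect $\Phi$ to $J_1$: using $u^2K_1(u)=-u^2K_0'(u)$ and the splitting $u^3=u\lp u^2+w^2\rp-uw^2$ one finds $\Phi''(w)=-\lp wJ_0\rp'(w)=J_1(w)$, all boundary terms vanishing because $u^2\lp u^2+w^2\rp^{-3/2}K_0(u)\to0$ as $u\to0^+$ and as $u\to\infty$; and using $uK_0(u)=-\tfrac{d}{du}\lp uK_1(u)\rp$ together with $uK_1(u)\to1$ as $u\to0^+$ one finds $\Phi(w)=\tfrac1w-J_1(w)$. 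Adding the last two relations yields $\Phi''+\Phi=\tfrac1w$. The same ODE holds for $f$: from the definitions and $\Ci'(w)=\tfrac{\cos w}{w}$, $\Si'(w)=\tfrac{\sin w}{w}$ one checks $f'=-g$ and $g'=f-\tfrac1w$, hence $f''+f=\tfrac1w$.

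It follows that $\Phi-f$ is a holomorphic solution of $y''+y=0$ on the simply connected region $\{w_1>0\}$, so $\Phi(w)-f(w)=A\cos w+B\sin w$ for some constants $A,B$. Letting $w\to+\infty$ along the positive reals, one has $\Phi(w)=O(1/w)$ (bound $\lp u^2+w^2\rp^{-1/2}\le 1/w$ and integrate $uK_0(u)$) and $f(w)=O(1/w)$ (from $\Ci(w)=O(1/w)$ and $\tfrac\pi2-\Si(w)=O(1/w)$), so $\Phi-f\to0$; this forces $A=B=0$, hence $\Phi\equiv f$. Differentiating gives $wJ_0(w)=-\Phi'(w)=-f'(w)=g(w)$ and then $J_1(w)=\Phi''(w)=\tfrac1w-\Phi(w)=\tfrac1w-f(w)$, and substituting these back into $\KK_{\t,\frac dc}(n)=iwJ_0(w)-\sgn(n)J_1(w)$ completes the argument. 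I expect the main obstacle to be precisely this closed‑form evaluation of $J_0$ and $J_1$ (equivalently, the identification $\Phi=f$); the remaining steps — the substitution, the two integrations by parts with their boundary terms, and the $O(1/w)$ asymptotics — are routine, and one could instead quote the needed Bessel integral from standard tables.
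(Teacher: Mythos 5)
Your proposal is correct, and its overall skeleton matches the paper's: the same substitution $u=2\pi|n|t$, the same reduction to the two integrals $J_0(w)=\int_0^\infty \frac{uK_0(u)}{(u^2+w^2)^{3/2}}\,du$ and $J_1(w)=\int_0^\infty\frac{u^2K_1(u)}{(u^2+w^2)^{3/2}}\,du$, and the same observation that the lemma is equivalent to evaluating these in closed form. Where you diverge is in that evaluation. The paper quotes 6.565.7 of Gradshteyn--Ryzhik to express $wJ_0(w)=\sqrt{w}\,S_{-\frac32,-\frac12}(w)$ and $J_1(w)=\frac1w-\sqrt{w}\,S_{-\frac12,\frac12}(w)$ in terms of Lommel functions, then invokes a limiting-value formula from Watson's treatise (needed because $\mu\pm\nu$ is a negative odd integer, so the generic definition of $S_{\mu,\nu}$ degenerates) and declares the remaining simplification ``lengthy but straightforward.'' You instead prove the two identities from scratch: the auxiliary function $\Phi(w)=\int_0^\infty\frac{uK_0(u)}{\sqrt{u^2+w^2}}\,du$ satisfies $\Phi'=-wJ_0$, $\Phi''=J_1$, $\Phi=\frac1w-J_1$, hence $\Phi''+\Phi=\frac1w$; the same inhomogeneous ODE holds for $f$ via $f'=-g$, $g'=f-\frac1w$; and the decay of both $\Phi$ and $f$ along the positive real axis kills the homogeneous part, giving $\Phi\equiv f$ and thus $wJ_0=g$, $J_1=\frac1w-f$. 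I checked the integrations by parts (the boundary terms behave as you claim, using $u^2K_0(u)\to0$ and $uK_1(u)\to1$ as $u\to0^+$, and $\sqrt{w^2}=w$ in the right half-plane) and the differentiation formulas for $\Ci$ and $\Si$; everything is sound. Your route is more self-contained and avoids the delicate limiting case of the Lommel function definition, at the cost of a slightly longer write-up; the paper's route is shorter on the page but pushes the real work into table lookups and an unstated computation.
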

\begin{proof}
Using \eqref{eq:KK Definition} and making the change of variables $u = 2 \pi |n| t$ we have, with $w:=2\pi|n|(\t+\frac dc)$,
\begin{equation*}
\KK_{\t,\frac dc}(n) =
iw \int_0^\infty \frac{u K_0(u)}{\lp u^2 + w^2 \rp^{\frac32}} du - \sgn(n) \int_0^\infty \dfrac{u^2 K_1(u)}{\lp u^2 + w^2 \rp^{\frac32}} du,
\end{equation*}
Such integrals can be expressed in a couple of different ways (see Section 3 of \cite{LZ}). We start with 6.565.7 of \cite{GR}, which gives (for $\re (a) >0$, $\re (b) >0$, and $\re (\nu) > -1$)
\begin{equation*}
\int_0^\infty x^{1+\nu} \left(x^2 + a^2\right)^\mu K_\nu (bx) dx
=
2^\nu \Gamma (\nu+1) a^{\nu + \mu+1} b^{-1-\mu} S_{\mu-\nu,\mu+\nu+1} (ab),
\end{equation*}
where the Lommel function $S_{\mu, \nu}$ is defined in 8.570.2 of \cite{GR}
(with the definition extending to $\mu \pm \nu$ a negative odd integer through its limiting value). In particular, using 8.575.1 of \cite{GR},
\begin{equation*}
w\int_0^\infty \frac{uK_0(u)}{\lp u^2 + w^2 \rp^{\frac32}} du
=
\sqrt{w} S_{- \frac{3}{2}, -\frac{1}{2}} (w)
\andd
\int_0^\infty \frac{u^2 K_1(u)}{\lp u^2 + w^2 \rp^{\frac32}} du
= \frac{1}{w} - \sqrt{w} S_{-\frac{1}{2}, \frac{1}{2}}(w),
\end{equation*}
These two Lommel functions (for which one needs the limiting value of the definition above) can be simplified using Watson's treatise of Bessel functions (see the first displayed equation after equation (2) in Subsection 10.73 of \cite{Watson}), which expresses $S_{\nu-1, \nu}$ for $\nu\not\in-\mathbb N_0$ as
\begin{equation*}
S_{\nu-1, \nu} (w) =
\frac{1}{2 \nu} \lb w^\nu \log (w)
- \frac{\del }{\del \mu}S_{\mu,\nu} (w) \rb_{\mu = \nu+1}.
\end{equation*}
Plugging in 8.570.2 of \cite{GR}, the lemma follows after a lengthy but straightforward calculation.
\end{proof}
Lemma \ref{lem:modularity_obstruction_with_f_g} is useful because the definition of the trigonometric integrals over the cut-plane
analytically continues $\KK_{\t, \frac dc}$ to all $\t \in\mathbb H$. We use this to find a Mordell-type representation of this term that is valid on $\mathbb H$.

\begin{lem}\label{lem:f_g_trig_integral_integral_rep}
For $w \in \mathbb{H}$ we have
\begin{equation*}
f(w) = i\PV\int_0^\infty \frac{e^{iwt}}{t^2-1} dt + \frac{\pi}{2} e^{iw},
\qquad
g(w) = \PV\int_0^\infty \frac{te^{iwt}}{t^2-1} dt - \frac{\pi i}{2}e^{iw}.
\end{equation*}
\end{lem}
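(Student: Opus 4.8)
The plan is to reduce both identities to a single evaluation of a standard oscillatory integral followed by an algebraic recombination. First I would record the elementary identities
\[
f(w)+ig(w)=e^{iw}\lp \tfrac{\pi}{2}-\Si(w)-i\,\Ci(w)\rp,\qquad
f(w)-ig(w)=e^{-iw}\lp \tfrac{\pi}{2}-\Si(w)+i\,\Ci(w)\rp ,
\]
which fall out of the definitions of $f,g$ upon writing $\cos w\mp i\sin w=e^{\mp iw}$. On the integral side, the partial fractions $\tfrac{1}{t^2-1}=\tfrac12(\tfrac1{t-1}-\tfrac1{t+1})$ and $\tfrac{t}{t^2-1}=\tfrac12(\tfrac1{t-1}+\tfrac1{t+1})$ reduce the two claimed formulas, taken together, to evaluating $\PV\int_0^\infty \frac{e^{iwt}}{t-1}\,dt$ and $\int_0^\infty \frac{e^{iwt}}{t+1}\,dt$ for $w\in\mathbb H$ (the second requires no principal value, its pole being at $t=-1$).

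After the shifts $t\mapsto t-1$ and $t\mapsto t+1$, both of these are expressed through the single quantity $G(w):=\int_1^\infty \frac{e^{iwu}}{u}\,du$, using in addition the elementary fact $\PV\int_{-1}^1\frac{e^{iwu}}{u}\,du=2i\,\Si(w)$ (split the integrand into odd and even parts in $u$: the $\cos$-part contributes $0$ by symmetry of the principal value, and the $\sin$-part integrates to $2\Si(w)$ after $v=wu$). To evaluate $G(w)$ for $w\in\mathbb H$: when $w>0$ is real, $v=wu$ gives $G(w)=\int_w^\infty \frac{e^{iv}}{v}\,dv=-\Ci(w)+i(\tfrac{\pi}{2}-\Si(w))$; for general $w\in\mathbb H$ I would rotate the ray $\arg v=\arg w$ down to the positive real axis, with the far arc contributing nothing by a Jordan-type estimate since $e^{iv}$ decays in the closed upper half-plane, and then observe that $-\Ci(v)-i\,\Si(v)$ is a holomorphic antiderivative of $-e^{iv}/v$ on the cut plane, so $G(w)=-\Ci(w)+i(\tfrac{\pi}{2}-\Si(w))$ there too. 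Assembling, $\PV\int_0^\infty\frac{e^{iwt}}{t-1}\,dt=e^{iw}(\tfrac{i\pi}{2}+i\,\Si(w)-\Ci(w))$ and $\int_0^\infty\frac{e^{iwt}}{t+1}\,dt=e^{-iw}(\tfrac{i\pi}{2}-i\,\Si(w)-\Ci(w))$; feeding these through the partial-fraction decompositions and comparing with $\tfrac12((f+ig)+(f-ig))=f$ and $\tfrac1{2i}((f+ig)-(f-ig))=g$ gives the lemma, with the summand $\tfrac{\pi}{2}e^{iw}$ emerging precisely as the half-residue contribution of the pole at $t=1$ (the $\tfrac{i\pi}{2}$ terms above).

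The one genuine obstacle is the passage from $w>0$ to all of $w\in\mathbb H$: since $f$ and $g$ are defined through the trigonometric integrals on the cut plane, one must check that the Mordell-type integrals represent them throughout $\mathbb H$ and not merely on the boundary ray where the contour manipulations are transparent. I would therefore run the ray-rotation argument for $G(w)$ uniformly for $w\in\mathbb H$, so that no appeal to the identity theorem from the real axis is required, and verify along the way that every integral over $(0,\infty)$ converges absolutely for $w\in\mathbb H$ — which legitimizes differentiation under the integral sign and the interchanges implicit in the shifts — and that the principal value at $t=1$ is well defined since $e^{iwt}$ is smooth there. A secondary, purely bookkeeping point is tracking the $\PV$ through $t\mapsto t-1$: the singularity moves to $u=0$ and one must note that $\PV\int_{-1}^\infty$ splits as $\PV\int_{-1}^1+\int_1^\infty$, with all the singular behaviour in the first piece. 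As a cross-check, one could instead verify that $f$ and the claimed right-hand side both satisfy the first-order system $y'=-z$, $z'=y-\tfrac1w$ on $\mathbb H$ and agree as $w\to i\infty$, which avoids contour deformation but replaces it with justifying differentiation under the $\PV$ integral and computing the limit at the cusp.
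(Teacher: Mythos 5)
Your proof is correct, but it takes a genuinely different route from the paper's. The paper starts from the classical Laplace-type representations $f(w)=\int_0^\infty \frac{e^{-wt}}{t^2+1}\,dt$ and $g(w)=\int_0^\infty \frac{te^{-wt}}{t^2+1}\,dt$, valid for $\re(w)>0$ (quoting Abramowitz--Stegun 5.2.12--13), rotates the contour past the pole at $z=-i$ --- which after rotation becomes the pole at $t=1$ and produces the half-residue $\frac{\pi}{2}e^{iw}$ --- and then passes from the quadrant $\re(w)>0$, $\im(w)>0$ to all of $\mathbb{H}$ by observing that both sides are holomorphic there. You instead work directly from the definitions of $f$ and $g$ in terms of $\Si$ and $\Ci$: partial fractions reduce the claimed right-hand sides to $\PV\int_0^\infty\frac{e^{iwt}}{t\mp 1}\,dt$, which you evaluate through $G(w)=\int_1^\infty\frac{e^{iwu}}{u}\,du$ and $\PV\int_{-1}^1\frac{e^{iwu}}{u}\,du=2i\,\Si(w)$; I verified the recombination via $f\pm ig=e^{\pm iw}\lp\frac{\pi}{2}-\Si(w)\mp i\,\Ci(w)\rp$ and it closes correctly. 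Your route buys self-containedness (no external integral tables) and dispenses with the analytic-continuation step, since the ray-rotation for $G$ is run uniformly over $\mathbb{H}$; the cost is a longer chain of elementary evaluations, and both arguments still require a Jordan-type arc estimate. Two cosmetic points, neither affecting correctness: attributing the $\frac{\pi}{2}e^{iw}$ term to the half-residue at $t=1$ is only heuristic in your organization of the computation --- the $\frac{i\pi}{2}$'s actually enter through $\int_0^\infty\frac{\sin v}{v}\,dv=\frac{\pi}{2}$ inside $G(w)$, and one of them sits in the integral over $\frac{1}{t+1}$, whose pole is not on the path --- and the phrase about $e^{iv}$ decaying in the \emph{closed} upper half-plane should read that it is bounded there and decays off the real axis, which is exactly what the Jordan estimate uses.
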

\begin{proof}
If $\re(w)>0$, then by 5.2.12 and 5.2.13 of \cite{AS} we have
\begin{equation*}
f(w) = \int_0^\infty \frac{e^{-wt}}{t^2+1} dt
\quad \mbox{and} \quad
g(w) = \int_0^\infty \frac{te^{-wt}}{t^2+1} dt.
\end{equation*}

\begin{figure}[h!]
\vspace{-0pt}
\centering
\includegraphics[scale=0.25]{./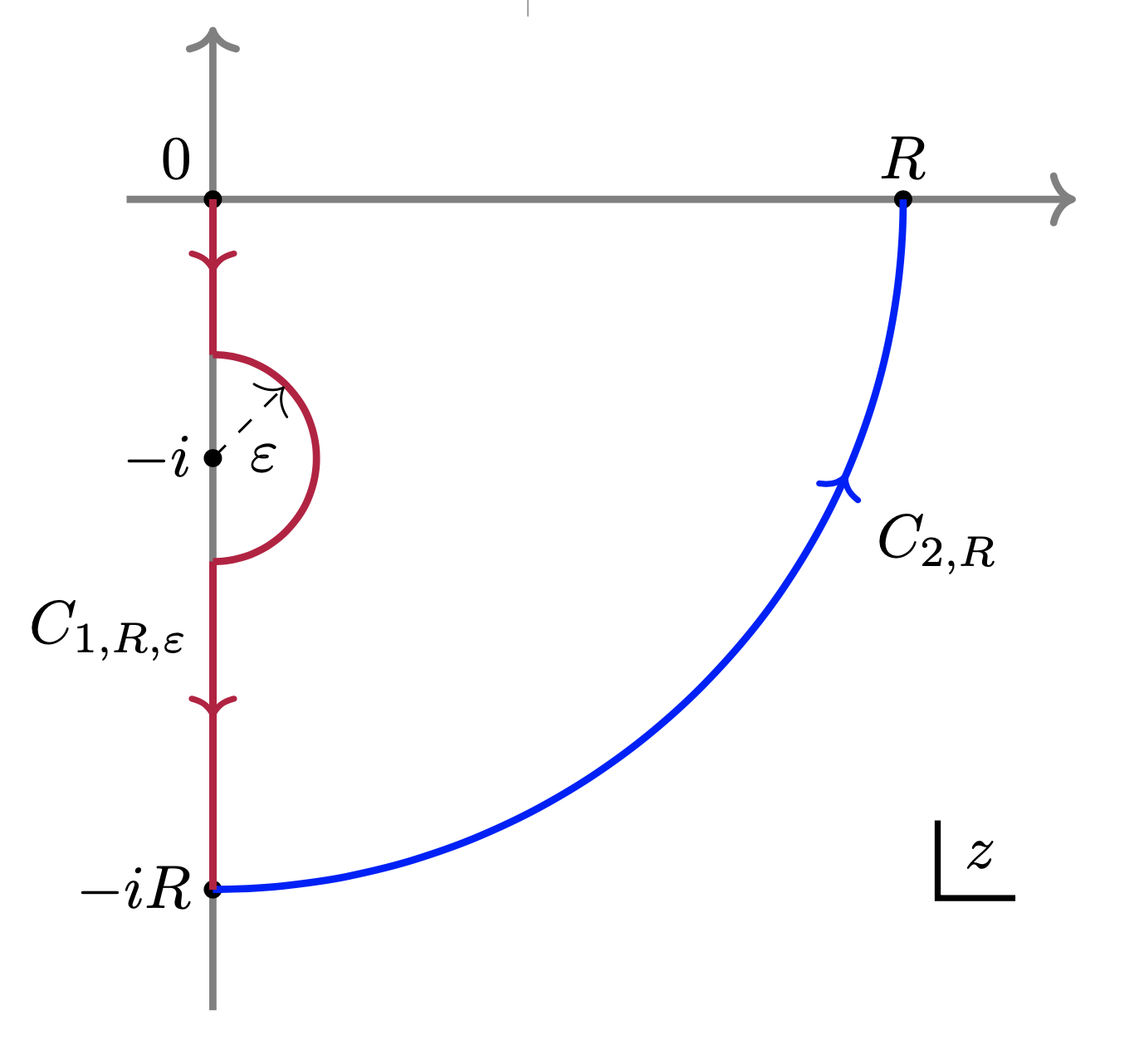}
\vspace{-15pt}
\caption{}
\label{fig:contour}
\vspace{-10pt}
\end{figure}

\noindent We first focus on $f$ and assume $\re(w), \im(w)>0$. The integrand is a meromorphic function of $t$ with poles at $t=\pm i$, we can deform the path of integration as shown in Figure \ref{fig:contour} and rewrite
\begin{align*}
f(w) = \lim_{R \to \infty} \left( \int_{C_{1,R,\e}} \dfrac{e^{-wz}}{z^2+1} dz + \int_{C_{2,R}} \dfrac{e^{-wz}}{z^2+1} dz \right),
\end{align*}
for $0< \ve < 1$. The integral over $C_{2,R}$ tends to zero as $R \to \infty$ using $\re(w), \im(w)>0$. Letting $\e \to 0^+$ for the integral over $C_{1,R,\e}$ gives the expression in the lemma with the second term coming from the simple pole at $z=-i$. Crucially, this expression is holomorphic for $\im(w)>0$ and hence coincides with $f(w)$ there. The argument for $g$ is exactly the same.\qedhere
\end{proof}

We are now ready to continue the expression from Lemma \ref{L: Second Mordell Lemma} for $\calU_{j,\varrho}(\t)$ to the entire $\H$ and to derive a Mordell-type expression for $\mathcal{U}_{j,\varrho}^\#$ as discussed in Remark \ref{rem:extension_obstruction}. More precisely, we find such an expression for (with $\t \in \IH$)
\begin{equation*}
\mathcal{I}_{\ell, -\frac dc}(\t) :=
u_\ell (\t) + \mathcal{U}_{\ell,-\frac dc}^\# (\t) .
\end{equation*}

\begin{thm}\label{prop:uj_modular_mordell_representation}
For $M = \pmat{a & b \\ c & d} \in \SL_2 (\IZ)$ with $c > 0$ and for $\t \in \IH$, we have
\begin{equation*}
u_j \left( \frac{a \t+ b}{c \t +d} \right) =
(c \t +d) \sum_{\ell=0}^2 \Psi_M (j,\ell) \mathcal{I}_{\ell, -\frac dc}(\t),
\end{equation*}
where
\begin{equation*}
\mathcal{I}_{\ell, -\frac dc}(\t)
=
\frac{1}{\pi i} \psum_{n\in\Z+\b_\ell} d_\ell(n)e^{-\frac{2\pi idn}c}
\PV\int_0^\infty \frac{e^{2 \pi i \lp \t + \frac{d}{c} \rp t}}{t-n} dt .
\end{equation*}
\end{thm}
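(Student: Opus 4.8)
The plan is to feed the closed forms for $\KK_{\t,\frac dc}(n)$ obtained in Lemmas~\ref{lem:modularity_obstruction_with_f_g} and~\ref{lem:f_g_trig_integral_integral_rep} into the midway representation of Proposition~\ref{P: Mordell Representation Step 1} and then watch the terms reassemble. Combining those two lemmas and using $\frac{\sgn(n)+t}{t^2-1}=\frac{1}{t-\sgn(n)}$, one gets for $\t_1>-\frac dc$ and $n\in\Z+\b_j$
\[
\KK_{\t,\frac dc}(n)=i\,\PV\!\int_0^\infty\frac{e^{2\pi i|n|\left(\t+\frac dc\right)t}}{t-\sgn(n)}\,dt+\pi\,e^{2\pi i|n|\left(\t+\frac dc\right)}\d_{n>0}-\frac{1}{2\pi n\left(\t+\frac dc\right)},
\]
where $\d_{n>0}=1$ if $n>0$ and $0$ if $n<0$; the substitution $t\mapsto t/|n|$ turns the integral here into $\PV\int_0^\infty e^{2\pi i(\t+\frac dc)t}(t-n)^{-1}\,dt$, which is exactly the kernel in the statement. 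I would then insert this into the formula of Proposition~\ref{P: Mordell Representation Step 1}.

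The crux of the argument is the reassembly, and the one delicate point is that the three resulting families of terms are not all individually absolutely convergent against $d_j(n)$: the $\PV$–integral family and the $-\frac{1}{2\pi n(\t+\frac dc)}$ family have summands of size $O(|n|^{-1/2})$, whereas the exponential family and the total sum converge absolutely. I would therefore work throughout with the symmetric partial sums $\sum_{|n|\le X}$, split them term by term, and let $X\to\infty$. The exponential terms contribute
\[
-\frac1\pi\cdot\pi\sum_{n>0}d_j(n)e^{-\frac{2\pi idn}{c}}e^{2\pi in\left(\t+\frac dc\right)}=-\sum_{n>0}d_j(n)q^n=-u_j(\t),
\]
where it is crucial that $c_j=0$ for $j\in\{0,1,2\}$ (there is no constant term in~\eqref{eq:Uj_general_form}), so $u_j$ has no constant term. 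The $-\frac{1}{2\pi n(\t+\frac dc)}$ terms contribute $+\frac{1}{2\pi^2(\t+\frac dc)}\psum_n\frac{d_j(n)e^{-2\pi idn/c}}{n}$, which exactly cancels the first term of Proposition~\ref{P: Mordell Representation Step 1}; convergence of this symmetric sum is Lemma~\ref{lem:1_over_n_term_convergence}. Since the left side — equal to $\sum_nd_j(n)e^{-2\pi idn/c}\KK_{\t,\frac dc}(n)$ — converges absolutely by Lemma~\ref{L: Second Mordell Lemma} and the two pieces just discussed converge, the remaining $\PV$–integral family converges as a symmetric sum as well; collecting everything and using $-\frac i\pi=\frac1{\pi i}$ yields, for $\t_1>-\frac dc$,
\[
u_j(\t)+\mathcal U_{j,-\frac dc}(\t)=\frac1{\pi i}\psum_{n\in\Z+\b_j}d_j(n)e^{-\frac{2\pi idn}{c}}\PV\!\int_0^\infty\frac{e^{2\pi i\left(\t+\frac dc\right)t}}{t-n}\,dt.
\]

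It remains to extend this to all $\t\in\H$ and to identify the left side with $\mathcal I_{j,-\frac dc}$. An elementary estimate gives $\PV\int_0^\infty e^{2\pi i(\t+\frac dc)t}(t-n)^{-1}\,dt=-\frac{i}{2\pi n(\t+\frac dc)}+O(|n|^{-2})$ locally uniformly in $\t\in\H$ (for $\t_1>-\frac dc$ this also drops out of the $O(|n|^{-2})$ bound on $\KK_{\t,\frac dc}(n)$ proved inside Lemma~\ref{L: Second Mordell Lemma}); separating off the explicit $\sim-n^{-1}$ part exhibits the right-hand series as a locally uniformly convergent series of functions holomorphic on $\H$, hence holomorphic there. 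Since it agrees on the open set $\{\t_1>-\frac dc\}$ with $u_j+\mathcal U^{\#}_{j,-\frac dc}$, which by Remark~\ref{rem:extension_obstruction} is holomorphic on $\C\setminus(-\infty,-\frac dc]\supset\H$, the identity theorem forces agreement on all of $\H$; this is the asserted formula for $\mathcal I_{j,-\frac dc}$. The transformation $u_j\big(\frac{a\t+b}{c\t+d}\big)=(c\t+d)\sum_{\ell=0}^{2}\Psi_M(j,\ell)\,\mathcal I_{\ell,-\frac dc}(\t)$ is then precisely the $c>0$ statement recorded in Remark~\ref{rem:extension_obstruction}, with $\Psi_M$ the multiplier of Proposition~\ref{P: U_j and u_j transformation}. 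The step I expect to be the main obstacle is the careful handling of the conditionally convergent rearrangements, together with the local-uniformity bookkeeping needed to justify holomorphy; the rest is mechanical given the preceding lemmas.
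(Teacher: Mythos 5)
Your proposal is correct and follows essentially the same route as the paper: substitute Lemma \ref{lem:f_g_trig_integral_integral_rep} into Lemma \ref{lem:modularity_obstruction_with_f_g}, insert the result into Proposition \ref{P: Mordell Representation Step 1} so that the exponential terms reassemble into $-u_j$ and the $\frac{1}{n}$ terms cancel the symmetric sum, and then extend to all of $\H$ by showing the right-hand side is holomorphic (after splitting off the $\frac1n$ part of the kernel) and matching it with $\mathcal U^\#_{j,-d/c}$. Your explicit treatment of the conditionally convergent rearrangement via symmetric partial sums fills in a step the paper leaves terse, but the underlying argument is the same.
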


\begin{proof}
We first plug Lemma \ref{lem:f_g_trig_integral_integral_rep} into Lemma \ref{lem:modularity_obstruction_with_f_g}
(note that $\beta_\ell \not \in \mathbb{Z}$ so that $n \neq 0$).
Then we insert the result in Proposition \ref{P: Mordell Representation Step 1}
while noting the modular transformations in Proposition~\ref{P: U_j and u_j transformation}
to find the claimed transformation as well as the identity for $\mathcal{I}_{\ell, -\frac dc}(\t)$ if $\t \in \mathbb{H}$ with $\t_1 > - \frac{d}{c}$ (note that $\sgn (c \t_1 + d) = 1$ in this case).
Therefore, the theorem statement follows if we can show that the stated expression for $\mathcal{I}_{\ell, -\frac dc}$ defines a holomorphic function on $\IH$.

The summands here are already holomorphic on $\mathbb{H}$ as follows from Lemma \ref{lem:f_g_trig_integral_integral_rep}. So our goal is to show that the involved infinite sums are convergent for any $\t \in \mathbb{H}$ and they yield a holomorphic function there.
We start our analysis by splitting $\frac{1}{t-n}=( \frac{1}{t-n} + \frac{1}{n} ) - \frac{1}{n}$ to rewrite
\begin{equation*}
\mathcal{I}_{\ell, -\frac dc}(\t)
=
\frac{1}{\pi i} \psum_{n\in\Z+\b_\ell} \frac{d_\ell(n)e^{-\frac{2\pi idn}c}}{n}
\PV\int_0^\infty
\frac{t e^{2 \pi i \lp \t + \frac{d}{c} \rp t}}{t-n} dt
-
\frac{1}{2\pi^2\left(\t + \frac{d}{c}\right)}
\psum_{n\in\Z+\b_\ell} \frac{d_\ell(n)e^{-\frac{2\pi idn}c}}{n}
\end{equation*}
with the convergence of the sums on the right-hand side proving the same for the sum in  $\mathcal{I}_{\ell, -\frac dc}$.
The second term is holomorphic for $\t \in \IH$ with Lemma \ref{lem:1_over_n_term_convergence} justifying the convergence of the involved sum. So we prove convergence of the first sum for $\t \in \IH$ and the holomorphy of the resulting function by showing that the sum over $n$ is absolutely and uniformly convergent on compact subsets of $\IH$.
We start by bounding the integral appearing in this term. If $n<0$, then we have
\begin{equation*}
\left| \int_0^\infty  \frac{te^{2 \pi i \lp \t + \frac{d}{c} \rp t}}{t-n} dt \right|
\leq \frac{1}{|n|} \int_0^\infty t e^{- 2 \pi \t_2 t} dt
= \frac{1}{4 \pi^2 \t_2^2 |n|}.
\end{equation*}
If $n>0$, then we distinguish whether
$\t_1 \geq -\frac{d}{c}$ or $\t_1 < - \frac{d}{c}$.
If $\t_1 \geq -\frac{d}{c}$, then we rotate the path of integration to $e^{\frac{\pi i}{4}} \IR^+$ while picking up the half residue from the pole at $t=n$ to write
\begin{equation*}
\PV\int_0^\infty \frac{te^{2 \pi i \lp \t + \frac{d}{c} \rp t}}{t-n} dt
=
\pi i n e^{2 \pi i n \lp \t + \frac{d}{c} \rp}
+
\int_{e^{\frac{\pi i}{4}} \IR^+}  \frac{ze^{2 \pi i \lp \t + \frac{d}{c} \rp z}}{z-n} dz .
\end{equation*}
For $z = e^{\frac{\pi i}{4}} t$ with $t \geq 0$, we then note
\begin{equation*}
\left| e^{2 \pi i \lp \t + \frac{d}{c} \rp z} \right|
=
e^{- \sqrt{2} \pi \lp \t_2 + \t_1 + \frac{d}{c} \rp t} \leq e^{- \sqrt{2} \pi \t_2 t}
\quad \mbox{and} \quad
\left| \frac{z}{z-n} \right| \leq \frac{\sqrt{2}t}{n}
\end{equation*}
to bound
\begin{equation*}
\left| \PV\int_0^\infty \frac{te^{2 \pi i \lp \t + \frac{d}{c} \rp t}}{t-n} dt \right|
\leq \pi n e^{-2 \pi n \t_2} + \frac{1}{\sqrt{2} \pi^2 \t_2^2 n} .
\end{equation*}
The same bound also holds for $\t_1 < -\frac{d}{c}$ as can be seen by similarly rotating the path of integration to $e^{-\frac{\pi i}{4}} \IR^+$. So we have
\begin{equation*}
\sum_{n\in\Z+\b_\ell} \left| \frac{d_\ell(n)e^{-\frac{2\pi idn}c}}{n}
\PV\int_0^\infty e^{2 \pi i \lp \t + \frac{d}{c} \rp t}
\frac{t}{t-n} dt \right|
\leq
\sum_{n\in\Z+\b_\ell} \frac{|d_\ell (n)|}{|n|}
\lp \pi n e^{-2 \pi \t_2 n} \d_{n>0} + \frac{1}{\t_2^2 |n|} \rp ,
\end{equation*}
which is uniformly convergent in $\t$ over compact subsets of $\mathbb{H}$ thanks to Corollary \ref{cor:dj_bound}.
\end{proof}

\section{Bounds on Non-Principal Parts}\label{sec:bounds_nonprincipal_parts}
In the Circle Method, it is important to distinguish between the principal and non-principal parts of the transformed generating functions. The principal parts, roughly speaking, correspond to negative $q$-powers and contribute to the Fourier coefficients in an exponentially growing manner. The non-principal parts, on the other hand are error terms. In our application, we have a mixed-type object and the functions $u_j$ multiply a weakly holomorphic modular form, which has exponentially growing parts towards cusps. In this section, we analyze combinations of the form $e^{- 2 \pi i d \t} \mathcal I_{\ell,\varrho}(\t)$ with $d \geq 0$ from this point of view, determining where its (continuum of) principal part lies and likewise bounding its non-principal part. In particular, looking at the expression
for $\mathcal I_{\ell,\varrho}$ given
in Theorem \ref{prop:uj_modular_mordell_representation},
for $h' \in \IZ$, $k\in\mathbb N$ with $\gcd (h',k) = 1$ and $\re (V) \geq 1$ we decompose
\begin{equation}\label{eq:calI_principal_nonprincipal_decomposition}
e^{2\pi d V} \mathcal{I}_{\ell, \frac{h'}{k}} \!\lp \frac{h'}{k} + iV \rp
=
\mathcal{I}^*_{\ell, \frac{h'}{k},d} \!\lp \frac{h'}{k} + iV \rp
+
\mathcal{I}^e_{\ell, \frac{h'}{k},d} \!\lp \frac{h'}{k} + iV \rp,
\end{equation}
assuming $d \geq 0$ and $d \not \in \IZ + \b_\ell$, where
\begin{align}
\mathcal{I}^*_{\ell, \frac{h'}{k},d} \!\lp \frac{h'}{k} + iV \rp
&:=
\frac{e^{2\pi d V}}{\pi i} \psum_{n\in\Z+\b_\ell} d_\ell(n)e^{ \frac{2 \pi ih'n}{k}}
\PV\int_0^d \frac{e^{- 2 \pi V t}}{t-n} dt,
\label{eq:calI_principal_part}
\\
\label{eq:calI_nonprincipal_part}
\mathcal{I}^e_{\ell, \frac{h'}{k},d} \!\lp \frac{h'}{k} + iV \rp
&:=
\frac{e^{2\pi d V}}{\pi i} \psum_{n\in\Z+\b_\ell} d_\ell(n)e^{\frac{2 \pi i h'n}{k}}
\PV\int_d^\infty \frac{e^{- 2 \pi V t}}{t-n} dt .
\end{align}

We next find an upper bound on the non-principal part
$\mathcal{I}^e$, where along the way we also prove the convergence of the involved sum. This also justifies the decomposition in \eqref{eq:calI_principal_nonprincipal_decomposition}.

\begin{lem}\label{lem:calI_bounds}
Let $d \geq 0$ with $d \not \in \IZ + \b_\ell$,
$k \in \IN$, $h' \in \IZ$ with $\gcd (h',k) = 1$. Then for $V \in \IC$ with $\re (V) > 0$, the sum defining $\mathcal{I}^e$ in \eqref{eq:calI_nonprincipal_part} is convergent and the resulting function is holomorphic in $V$. Moreover, for $\re (V) \geq 1$ we have, with the implied constant independent of $h'$ and $V$,
\begin{equation*}
\mathcal{I}^e_{\ell, \frac{h'}{k},d} \!\lp \frac{h'}{k} + iV \rp \ll k.
\end{equation*}
\end{lem}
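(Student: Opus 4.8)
The overall structure is a splitting device, exactly as in the proof of Theorem~\ref{prop:uj_modular_mordell_representation}: writing $\frac{1}{t-n}=\frac{t}{n(t-n)}-\frac1n$ inside \eqref{eq:calI_nonprincipal_part} separates, termwise, $\mathcal{I}^e_{\ell,\frac{h'}{k},d}\!\left(\frac{h'}{k}+iV\right)=A+B$, where $B$ gathers the $-\frac1n$ piece and $A$ the $\frac{t}{n(t-n)}$ piece. In $B$ the integral decouples from $n$: since $\int_d^\infty e^{-2\pi Vt}\,dt=\frac{e^{-2\pi Vd}}{2\pi V}$, the prefactor $e^{2\pi dV}$ cancels and $B=-\frac{1}{2\pi^2 i V}\psum_{n\in\IZ+\b_\ell}\frac{d_\ell(n)e^{2\pi i h'n/k}}{n}$. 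This symmetric sum converges by Lemma~\ref{lem:1_over_n_term_convergence}, and Lemma~\ref{lem:1_over_n_term_bound} (with $c=k$ and with $-h'$ in place of the integer $d$ there) bounds it by $\ll k$ with implied constant independent of $h'$; together with $|V|\ge\re(V)\ge1$ this gives $B\ll k$.

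For $A$ the crucial first move is to strip off the large prefactor $e^{2\pi dV}$ \emph{before} estimating anything: substituting $t=d+u$,
\[
\PV\!\int_d^\infty\frac{t\,e^{-2\pi Vt}}{t-n}\,dt=e^{-2\pi Vd}\,\PV\!\int_0^\infty\frac{(d+u)e^{-2\pi Vu}}{u-(n-d)}\,du=:e^{-2\pi Vd}\,\widetilde L_n,
\]
so that $A=\frac{1}{\pi i}\psum_n\frac{d_\ell(n)e^{2\pi i h'n/k}}{n}\widetilde L_n$ carries no surviving exponential in $V$. I would then estimate $\widetilde L_n$ for each $n$. For $n<d$ (in particular all $n<0$ and the finitely many $0<n<d$) there is no pole on $[0,\infty)$, and $|u-(n-d)|\ge d-n$ gives $|\widetilde L_n|\le\frac{1}{|n-d|}\int_0^\infty(d+u)e^{-2\pi\re(V)u}\,du\ll\frac{1}{|n-d|}$ for $\re(V)\ge1$; this decays like $|n|^{-1}$ as $n\to-\infty$ and is bounded for the finitely many $0<n<d$, where $|n-d|\ge\operatorname{dist}(d,\IZ+\b_\ell)>0$ (whence the implied constant is allowed to depend on $d$). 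For $n>d$ I would rotate $[0,\infty)$ to the ray $e^{\pm\frac{\pi i}{4}}\IR^+$, the sign chosen according to whether $\im(V)\le0$ or $\ge0$, exactly as in the proof of Theorem~\ref{prop:uj_modular_mordell_representation}; this keeps $\re\!\big(V e^{\pm\frac{\pi i}{4}}\big)\ge\frac{\re(V)}{\sqrt2}>0$, while crossing the simple pole at $u=n-d$ produces the half-residue $i\pi n\,e^{-2\pi V(n-d)}$, and on the rotated ray $z=e^{\pm\frac{\pi i}{4}}s$ one has $|e^{-2\pi Vz}|\le e^{-\pi s}$ together with $|z-(n-d)|\ge\frac{n-d}{\sqrt2}$, so the remaining integral is $\ll_d\frac{1}{n-d}$. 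Thus $|\widetilde L_n|\ll_d n\,e^{-2\pi(n-d)}+\frac{1}{n-d}$.

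Assembling these bounds and using $|d_\ell(n)|\ll\sqrt{|n|}$ from Corollary~\ref{cor:dj_bound}, the series $\sum_n\frac{|d_\ell(n)|}{|n|}|\widetilde L_n|$ converges absolutely: the $n<0$ tail is dominated by $\sum|n|^{-3/2}$, the $n>d$ residue part by $\sum\sqrt n\,e^{-2\pi(n-d)}\ll_d1$, the $n>d$ ray part by $\sum_{n\ge2d}n^{-3/2}$ plus finitely many bounded terms, and the $0<n<d$ terms are finitely many; hence $A\ll_d1$. Since all implied constants here are independent of $h'$, $k$ and $V$ (though they may depend on $d$ and $\ell$), the decomposition $\mathcal{I}^e=A+B$ in particular shows the defining sum of $\mathcal{I}^e$ in \eqref{eq:calI_nonprincipal_part} converges, and $\mathcal{I}^e\ll k$. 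Holomorphy in $V$ on $\{\re(V)>0\}$ follows since every summand is holomorphic there and the same estimates, with $\re(V)$ in place of $1$, give uniform convergence on compact subsets. The main obstacle is the bookkeeping near the pole: one must recognize that the half-residue term $i\pi n\,e^{-2\pi V(n-d)}$, which superficially resembles a leftover principal part, is in fact exponentially decaying because $n-d>0$, and one must peel off the finitely many $n$ hugging $d$ (where $\tfrac{1}{|n-d|}$ is not uniformly bounded) and absorb them into a $d$-dependent constant.
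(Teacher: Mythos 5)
Your proposal is correct and follows essentially the same route as the paper: the same splitting $\frac{1}{t-n}=\frac{t}{n(t-n)}-\frac1n$, the same appeal to Lemmas \ref{lem:1_over_n_term_convergence} and \ref{lem:1_over_n_term_bound} for the $\frac1n$-piece, and the same shift $t\mapsto t+d$ followed by a contour rotation with half-residue for the remaining piece. The only cosmetic difference is that you estimate $\PV\int_0^\infty\frac{(d+u)e^{-2\pi Vu}}{u-(n-d)}\,du$ directly by rotating the contour, whereas the paper first writes $t+d=\frac{n}{n-d}\,t-\frac{d}{n-d}\bigl(t-(n-d)\bigr)$ so as to reuse the bound already established in the proof of Theorem \ref{prop:uj_modular_mordell_representation}.
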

\begin{proof}
As in the proof of Theorem \ref{prop:uj_modular_mordell_representation}, we decompose
$\frac{1}{t-n}=( \frac{1}{t-n} + \frac{1}{n} ) - \frac{1}{n}$ to write
\begin{equation*}
\mathcal{I}^e_{\ell, \frac{h'}{k},d} \!\lp \frac{h'}{k} + iV \rp
=
\frac{e^{2\pi d V}}{\pi i} \psum_{n\in\Z+\b_\ell} \frac{d_\ell(n)}{n} e^{\frac{2\pi ih'n}{k}}
\PV\int_d^\infty \frac{t e^{- 2 \pi V t}}{t-n} dt
-
\frac{1}{2 \pi^2 i V} \psum_{n\in\Z+\b_\ell} \frac{d_\ell(n)}{n} e^{\frac{2\pi ih'n}{k}}
\end{equation*}
with the convergence of the sums on the right-hand side proving the same for $\mathcal{I}^e$.
Our goal is to bound and prove the convergence and holomorphy of each of these two terms.
For the second term, convergence is implied by Lemma \ref{lem:1_over_n_term_convergence} and its holomorphy for $\re (V) > 0$ is immediate. By Lemma \ref{lem:1_over_n_term_bound} it is bounded as $O(k)$ for $\re (V) \geq 1$ with the implied constant independent of $h'$ and $V$.

So we focus on the first term, which making the change of variables $t \mapsto t+d$ we rewrite as
\begin{equation*}
\mathcal{I}^{e,[1]}_{\ell, \frac{h'}{k},d} \!\lp \frac{h'}{k} + iV \rp
:=
\frac{1}{\pi i} \psum_{n\in\Z+\b_\ell} \frac{d_\ell(n)}{n} e^{\frac{2\pi ih'n}{k}}
\PV\int_0^\infty \frac{(t+d) e^{- 2 \pi V t}}{t-(n-d)} dt .
\end{equation*}
The integral is a holomorphic function of $V$ for $\re (V) > 0$.
We next deduce the holomorphy of $\mathcal{I}^{e,[1]}$ by showing that the sum over $n$ is absolutely and uniformly convergent for $\re (V) \geq \d$ for any $\d > 0$. We start by bounding the integral appearing in the summands of $\mathcal{I}^{e,[1]}$. Splitting $t+d=\frac{n}{n-d} t - \frac{d}{n-d} (t-(n-d))$, we rewrite
\begin{equation*}
\PV\int_0^\infty \frac{(t+d) e^{- 2 \pi V t}}{t-(n-d)} dt
=
\frac{n}{n-d} \PV\int_0^\infty \frac{t  e^{- 2 \pi V t}}{t-(n-d)} dt
-
\frac{d}{n-d} \frac{1}{2 \pi V} .
\end{equation*}
The remaining integral is bounded in the proof of Theorem \ref{prop:uj_modular_mordell_representation} as
\begin{equation*}
\left|  \PV\int_0^\infty \frac{t  e^{- 2 \pi V t}}{t-(n-d)} dt \right|
\leq
\pi (n-d) e^{-2 \pi (n-d) \re (V)} \d_{n>d} + \frac{1}{\re (V)^2 |n-d|} .
\end{equation*}
So for $\re (V) \geq \d$ we can estimate
\begin{equation*}
\left| \mathcal{I}^{e,[1]}_{\ell, \frac{h'}{k},d} \!\lp \frac{h'}{k} + iV \rp \right|
\leq
\sum_{\substack{n\in\Z+\b_\ell \\ n >d}} |d_\ell (n)| e^{-2 \pi \d (n-d)}
+ \frac{1}{\pi \d^2} \sum_{n\in\Z+\b_\ell} \frac{|d_\ell(n)|}{|n-d|^2}
+ \frac{d}{2\pi^2 \d} \sum_{n\in\Z+\b_\ell} \frac{|d_\ell(n)|}{|n(n-d)|} .
\end{equation*}
By Corollary \ref{cor:dj_bound}, all three series are convergent, proving the absolute and uniform convergence of the series defining $\mathcal{I}^{e,[1]}$ for $\re (V) \geq \d$. This bound also shows that for $\re (V) \geq 1$ we have $\mathcal{I}^{e,[1]}_{\ell, \frac{h'}{k},d}( \frac{h'}{k} + iV) = O(1)$ as a function of $k$ with the implied constant independent of $h'$ and $V$.
\end{proof}

\section{Proof of Theorem \ref{T: Main Theorem}}\label{sec:circle_method}
In this section we apply the Circle Method to obtain the exponentially growing terms in the asymptotic expansion of the Fourier coefficients $\a_j (n)$ in (see \eqref{eq:alpha_generating_function} and \eqref{eq:U_multiplier_T_S})
\begin{equation*}
	\frac{u_j (\t)}{\eta (\t)} = \sum_{n\ge0} \a_j (n) q^{n+\DD_j},
	\where
	j \in \{0,1,2\}
	\mbox{ and }
	\DD_0 = -\frac{1}{48}, \ \DD_1 = \frac{23}{48}, \ \DD_2 := \frac{11}{12},
\end{equation*}
and in particular prove Theorem \ref{T: Main Theorem}.
We start with
\begin{equation*}
	\a_j (n) = \int_i^{i+1} \frac{u_j (\t)}{\eta (\t)} e^{-2\pi i \left(n+\DD_j\right) \t} d \t ,
\end{equation*}
with any path in $\H$ connecting $i$ to $i+1$.
Here we use Rademacher's path by following the exposition of \cite{Apostol} (as described in Subsection 3.4 of \cite{BN2}).
This gives, after the change of variables $\t = \frac{h}{k} + \frac{iZ}{k^2}$,
\begin{equation}\label{eq:alpha_j_Cauchy_result}
	\a_j (n) = i \sum_{k=1}^N k^{-2}
	\sum_{\substack{0 \leq h < k \\ \gcd (h,k) = 1}}
	\int_{Z_1}^{Z_2} \frac{u_j \lp \frac{h}{k} + \frac{iZ}{k^2} \rp}{\eta \lp \frac{h}{k} + \frac{iZ}{k^2} \rp} e^{-2\pi i \left(n+\DD_j\right) \lp \frac{h}{k} + \frac{iZ}{k^2} \rp} d Z,
\end{equation}
where $Z_j$ with $j \in \{1,2\}$ stands for $Z_j (h,k,N)$ defined by
\begin{equation*}
\hspace{-1.8cm}
	Z_1 (0,1;N) := \frac{1}{1-iN}, \qquad
	Z_2 (0,1;N) := \frac{1}{1+iN},
\end{equation*}
\begin{equation*}
	Z_1 (h,k;N) := \frac{k}{k-ik_1}, \qquad
	Z_2 (h,k;N) := \frac{k}{k+ik_2}
	\quad \mbox{for } k \geq 2,
\end{equation*}
with $\frac{h_1}{k_1} < \frac{h}{k} < \frac{h_2}{k_2}$ denoting consecutive fractions in the Farey sequence $F_N$ of order $N$. The integral runs from $Z_1$ to $Z_2$ over the right arc of the standard circle of radius $\frac{1}{2}$ and center $\frac{1}{2}$ (see Figure \ref{fig:contour_deformation}).

\begin{figure}[h!]
	\vspace{-5pt}
	\centering
	\includegraphics[scale=0.19]{./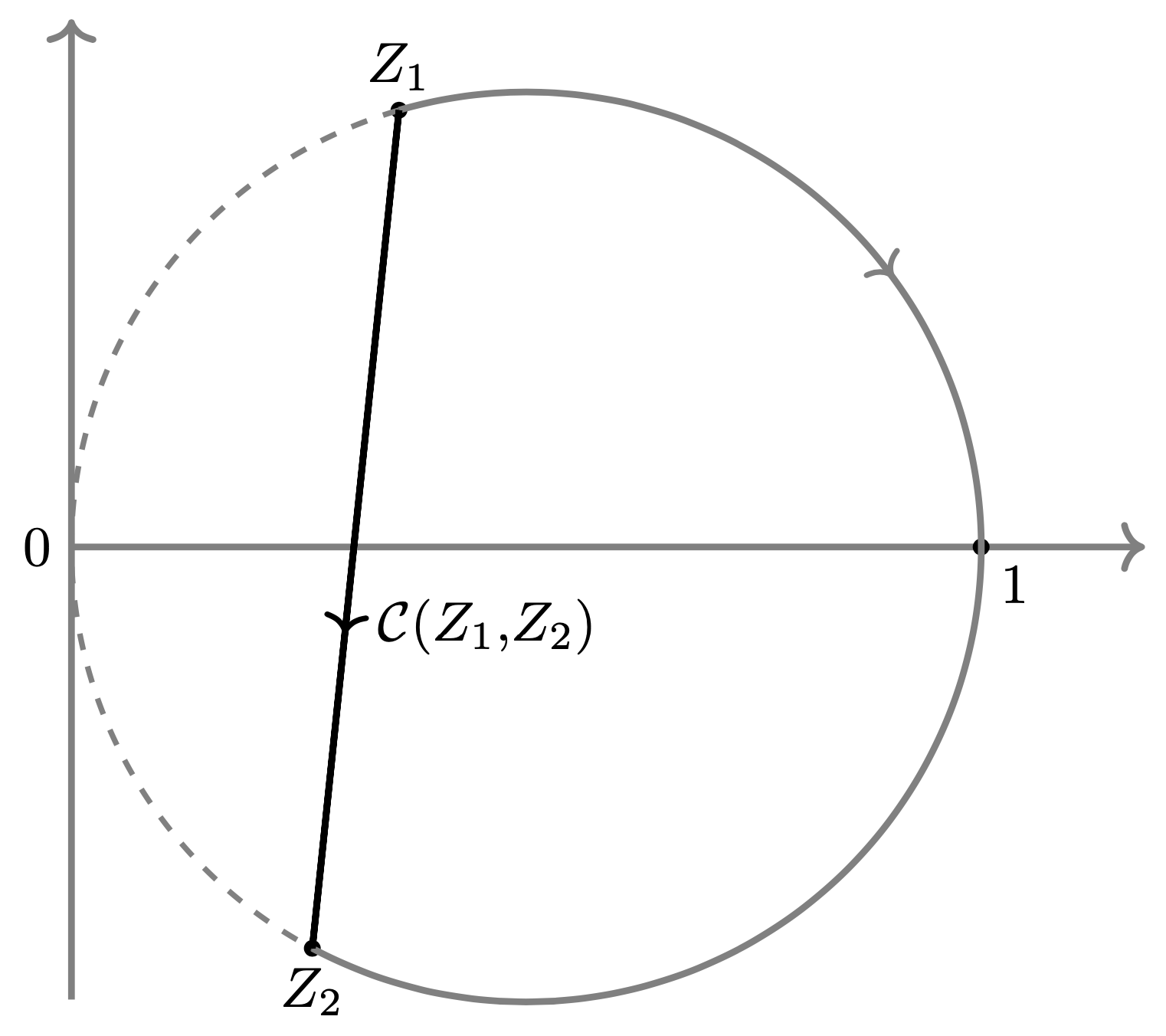}
		\vspace{-10pt}
	\caption{}
	\label{fig:contour_deformation}
	\vspace{-10pt}
\end{figure}

Now consider $M_{h,k} := \pmat{h & -\frac{hh'+1}{k} \\ k & - h'} \in \SL_2 (\IZ)$ with $0 \leq h' < k$ chosen to satisfy $h h' \equiv -1 \pmod{ k}$.
Applying Theorem \ref{prop:uj_modular_mordell_representation} with $M=M_{h,k}$ and $\t = \frac{h'}{k} + \frac{i}{Z}$ we find
\begin{equation}\label{eq:uj_modular_mordell_rep_h_k}
	u_j \!\lp  \frac{h}{k} + \frac{iZ}{k^2} \rp
	=
	\frac{ik}{Z} \sum_{\ell=0}^2 \Psi_{M_{h,k}} (j,\ell)
	\mathcal{I}_{\ell, \frac{h'}{k}} \!\lp \frac{h'}{k} + \frac{i}{Z} \rp
\end{equation}
with $\Psi_M$ defined in \eqref{eq:U_modular_transformation_multiplier}.
Similarly, the Dedekind-eta function satisfies the modular transformation
\begin{equation*}
	\eta \! \lp  \frac{h}{k} + \frac{iZ}{k^2} \rp
	= \nu_\eta(M_{h,k}) \sqrt{\frac{ik}{Z}}
	\eta \!\lp \frac{h'}{k} + \frac{i}{Z} \rp,
\end{equation*}
where $\nu_\eta$ is the $\eta$-multiplier system given for $M = \pmat{a & b \\ c & d} \in \SL_2 (\IZ)$ with $c > 0$:
\begin{equation*}
	\nu_\eta (M) := e^{\pi i \lp \frac{a+d}{12c} - \frac{1}{4} + s (-d,c) \rp}
	\quad \mbox{with }
	s\left(h',k\right) := \sum_{r=1}^{k-1} \frac{r}{k} \lp \frac{h'r}{k} - \left\lfloor \frac{h'r}{k} \right\rfloor
	- \frac{1}{2} \rp,
\end{equation*}
(see e.g.~Theorem 3.4 of \cite{Apostol}). Plugging these into \eqref{eq:alpha_j_Cauchy_result} we find
\begin{equation*}
	\a_j (n) = - \sum_{\ell=0}^2
	\sum_{k=1}^N k^{-\frac{3}{2}}
	\sum_{\substack{0 \leq h < k \\ \gcd (h,k) = 1}}
	\psi_{h,k} (j,\ell)
	\int_{Z_1}^{Z_2} Z^{-\frac{1}{2}}
	\frac{\mathcal{I}_{\ell, \frac{h'}{k}} \!\lp \frac{h'}{k} + \frac{i}{Z} \rp}{\eta  \!\lp \frac{h'}{k} + \frac{i}{Z} \rp}
	e^{-2\pi i \left(n+\DD_j\right) \lp \frac{h}{k} + \frac{iZ}{k^2} \rp} d Z
\end{equation*}
with
\begin{equation}\label{eq:multiplier_system_definition}
\psi_{h,k} (j,\ell) :=
e^{- \frac{\pi i}{4} } \frac{\Psi_{M_{h,k}} (j,\ell)}{\nu_\eta(M_{h,k})} .
\end{equation}
Now we split off the principal part from the term $q^{-\frac{1}{24}}$ in $\frac{1}{\eta(\t)}$ by following \eqref{eq:calI_principal_nonprincipal_decomposition} and writing
\begin{align}
	\frac{\mathcal{I}_{\ell, \frac{h'}{k}} \!\lp \frac{h'}{k} + \frac{i}{Z} \rp}{\eta  \!\lp \frac{h'}{k} + \frac{i}{Z} \rp}
	&= e^{- \frac{\pi i h'}{12k}}
	\mathcal{I}^*_{\ell, \frac{h'}{k},\frac{1}{24}} \!\lp \frac{h'}{k} +  \frac{i}{Z} \rp
	+  e^{- \frac{\pi i h'}{12k}}
	\mathcal{I}^e_{\ell, \frac{h'}{k},\frac{1}{24}} \!\lp \frac{h'}{k} +  \frac{i}{Z} \rp
	\notag\\
	&\qquad \qquad +
	\mathcal{I}_{\ell, \frac{h'}{k}} \!\lp \frac{h'}{k} + \frac{i}{Z} \rp
	\lp \frac{1}{\eta  \!\lp \frac{h'}{k} + \frac{i}{Z} \rp}
	- e^{- \frac{\pi i}{12} \lp \frac{h'}{k} + \frac{i}{Z} \rp} \rp .
	\label{eq:separating_principal_part}
\end{align}

We next show that the contribution of the second and the third term in \eqref{eq:separating_principal_part} (the non-principal part contributions) are small (i.e.,~non-exponential) for $N = \lfloor \sqrt{n} \rfloor$. Thanks to Lemma \ref{lem:calI_bounds}
each of these terms is holomorphic if $\re (Z) > 0$, so we can deform the path of integration to the chord $\mathcal C(Z_1,Z_2)$ from $Z_1$ to $Z_2$ (see Figure \ref{fig:contour_deformation}) to write their contribution to $\a_j (n)$ as
\begin{multline*}
	\b_j (n) := - \sum_{\ell=0}^2
	\sum_{k=1}^N k^{-\frac{3}{2}}
	\sum_{\substack{0 \leq h < k \\ \gcd (h,k) = 1}}
	\psi_{h,k} (j,\ell)
	\int_{\mathcal C(Z_1,Z_2)} Z^{-\frac{1}{2}}
	e^{-2\pi i \left(n+\DD_j\right) \lp \frac{h}{k} + \frac{iZ}{k^2} \rp}
	\\ \times
	\lp e^{- \frac{\pi i h'}{12k}}
	\mathcal{I}^e_{\ell, \frac{h'}{k},\frac{1}{24}} \!\lp \frac{h'}{k} +  \frac{i}{Z} \rp
	+
	\mathcal{I}_{\ell, \frac{h'}{k}} \!\lp \frac{h'}{k} + \frac{i}{Z} \rp
	\lp \frac{1}{\eta  \!\lp \frac{h'}{k} + \frac{i}{Z} \rp}
	- e^{- \frac{\pi i}{12} \lp \frac{h'}{k} + \frac{i}{Z} \rp} \rp \rp
	d Z.
\end{multline*}
For any point in the right half-plane contained within the disk bounded by the circle with center and radius $\frac12$, and in particular on $\mathcal C(Z_1,Z_2)$, we have $\re (\frac{1}{Z}) \geq 1$. So we can use Lemma \ref{lem:calI_bounds} (once with $d=0$ and once with $d=\frac{1}{24}$) to bound
\begin{equation*}
	e^{- \frac{\pi i h'}{12k}}
	\mathcal{I}^e_{\ell, \frac{h'}{k},\frac{1}{24}} \!\lp \frac{h'}{k} +  \frac{i}{Z} \rp
	+
	\mathcal{I}_{\ell, \frac{h'}{k}} \!\lp \frac{h'}{k} + \frac{i}{Z} \rp
	\lp \frac{1}{\eta  \!\lp \frac{h'}{k} + \frac{i}{Z} \rp}
	- e^{- \frac{\pi i}{12} \lp \frac{h'}{k} + \frac{i}{Z} \rp} \rp
	\ll k
\end{equation*}
with the implied constant independent of $h'$ and $Z$.
Next we recall (e.g.~from Theorem 5.3 and 5.5 of \cite{Apostol}), that if $\frac{h_1}{k_1} < \frac{h}{k} < \frac{h_2}{k_2}$ are consecutive fractions in the Farey sequence $F_N$, then we have
\begin{equation*}
	\max\{k,k_j\} \leq N \leq k+k_j-1
\end{equation*}
and hence
\begin{equation}\label{eq:k_kj_sq_bound}
\frac{N^2}{2} < k^2 + k_j^2 \leq 2 N^2 .
\end{equation}
Consequently, for any point $Z$ on $\mathcal C(Z_1,Z_2)$ (including the case $k=1$) we find
\begin{equation*}
	\frac{k^2}{2N^2} \leq \re (Z) < \frac{2k^2}{N^2}
	\andd
	 \frac{k^2}{2N^2} \leq |Z| < \frac{\sqrt{2}k}{N} .
\end{equation*}
In particular, these results imply that on $\mathcal C(Z_1,Z_2)$ we have, for $n \geq 1$
\begin{equation*}
\left| Z^{-\frac{1}{2}}  e^{-2\pi i\left(n+\DD_j\right) \lp \frac{h}{k} + \frac{iZ}{k^2} \rp} \right| \ll \frac{N}{k}
\andd
\mathrm{length}\!\lp \mathcal{C}(Z_1,Z_2) \rp < \frac{2\sqrt{2} k}{N}.
\end{equation*}
Therefore, for $n\in\mathbb{N}$ we find the bound
\begin{equation*}
|\b_j (n)| \ll n^{\frac{3}{4}}.
\end{equation*}
This leads to
\begin{align*}
	\a_j (n) &= - \sum_{\ell=0}^2
	\sum_{k=1}^{N} k^{-\frac{3}{2}}
	\sum_{\substack{0 \leq h < k \\ \gcd (h,k) = 1}}
	\psi_{h,k} (j,\ell) e^{-\frac{2 \pi i}{24k} \lp h' + 24\left(n+\DD_j\right) h \rp}
	\\ &\hspace{5cm} \times
	\int_{Z_1}^{Z_2} \frac{e^{2\pi \left(n+\DD_j\right) \frac{Z}{k^2}}}{\sqrt{Z}}
	\mathcal{I}^*_{\ell, \frac{h'}{k},\frac{1}{24}} \!\lp \frac{h'}{k} +  \frac{i}{Z} \rp
	d Z
	+
	O\!\lp n^{\frac{3}{4}}\rp  .
\end{align*}

We next recall the definition of the principal term $\mathcal{I}^*$ from equation \eqref{eq:calI_principal_part} and focus on the integral on the second line:
\begin{equation*}
\frac{1}{\pi i}
	\int_{Z_1}^{Z_2} \frac{e^{2\pi \left(n+\DD_j\right) \frac{Z}{k^2}}}{\sqrt{Z}}
	\psum_{m\in\Z+\b_\ell} d_\ell(m) e^{ \frac{2 \pi ih'm}{k}}
	\PV\int_0^{\frac{1}{24}} \frac{e^{- \frac{2 \pi}{Z} \left(t-\frac{1}{24}\right)}}{t-m} dt dZ .
\end{equation*}
Once more writing $\frac{1}{t-m}=\frac{t}{(t-m)m} - \frac{1}{m}$ as in the proof of Theorem \ref{prop:uj_modular_mordell_representation}, we can interchange the sum over $m$ and the integral over $t$ to write
\begin{equation*}
	\psum_{m\in\Z+\b_\ell} d_\ell(m) e^{ \frac{2 \pi ih'm}{k}}
	\PV\int_0^{\frac{1}{24}} \frac{e^{- \frac{2 \pi}{Z} \left(t-\frac{1}{24}\right)}}{t-m} dt
	=
	\PV\int_0^{\frac{1}{24}} \Phi_{\ell, \frac{h'}{k}} (t)
	e^{- \frac{2 \pi}{Z} \left(t-\frac{1}{24}\right)}  dt,
\end{equation*}
where
\begin{equation}\label{eq:integral_kernel_definition}
	\Phi_{\ell, \frac{h'}{k}} (t) := \psum_{m \in\Z+\b_\ell}
	\frac{d_\ell(m) e^{ \frac{2 \pi ih'm}{k}} }{t-m}
\end{equation}
is a meromorphic function of $t$ on the entire complex plane with poles in $\IZ+\b_\ell$.
Here note that $d_\ell (m)$ is defined through \eqref{eq:Uj_definition} and \eqref{eq:Uj_general_form}. In particular, the only pole of $\Phi_{\ell, \frac{h'}{k}} (t)$ that lies in the interval $[0,\frac{1}{24}]$ that we are integrating over is the one at $t=\frac{1}{48}$ for the component from $\ell =0$.

Now viewing the principal value integral as an average of two integrals from $0$ to $\frac{1}{24}$, one going above the potential pole and one below, we obtain an iterated integral of a continuous function over two compact intervals. We can then interchange the integral over $t$ with the integral over $Z$ and note that the integral over $Z$ yields an entire function of $t$ to conclude
\begin{equation*}
\int_{Z_1}^{Z_2} \frac{e^{2\pi \left(n+\DD_j\right) \frac{Z}{k^2}}}{\sqrt{Z}}
	\mathcal{I}^*_{\ell, \frac{h'}{k},\frac{1}{24}} \!\lp \frac{h'}{k} +  \frac{i}{Z} \rp
	d Z
=
\frac{1}{\pi i}
	\PV\int_0^{\frac{1}{24}} \Phi_{\ell, \frac{h'}{k}} (t)
	\int_{Z_1}^{Z_2}
	\frac{e^{2\pi \lp \frac{n+\DD_j}{k^2} Z + \lp \frac{1}{24} - t \rp \frac{1}{Z} \rp}}{\sqrt{Z}}
	dZdt .
\end{equation*}

We next focus on the integral in $Z$ and make the change of variables $Z \mapsto \frac{1}{Z}$ to rewrite it as
\begin{equation*}
\int_{Z_1}^{Z_2}
	\frac{e^{2\pi \lp \frac{n+\DD_j}{k^2} Z + \lp \frac{1}{24} - t \rp \frac{1}{Z} \rp}}{\sqrt{Z}} dZ
=
- \int_{1-i\frac{k_1}{k}}^{1+i\frac{k_2}{k}} Z^{-\frac{3}{2}}
	e^{2\pi \lp \frac{n+\DD_j}{k^2} \frac{1}{Z} + \lp \frac{1}{24} - t \rp Z \rp} dZ .
\end{equation*}
Then we decompose the integral on the right-hand side as
\begin{equation*}
\int_{\mathcal C} Z^{-\frac{3}{2}}
	e^{2\pi \lp \frac{n+\DD_j}{k^2} \frac{1}{Z} + \lp \frac{1}{24} - t \rp Z \rp} dZ
	-
	\sum_{r=1}^4
	\int_{\mathcal C_r} Z^{-\frac{3}{2}}
	e^{2\pi \lp \frac{n+\DD_j}{k^2} \frac{1}{Z} + \lp \frac{1}{24} - t \rp Z \rp} dZ ,
\end{equation*}
where the paths of integration are as shown in Figure \ref{fig:bessel_contour}. Here integrals over $\mathcal C_3$ and $\mathcal C_4$ (as well as that over $\mathcal C_5$ as mentioned above) define entire functions of $t$ whereas integrals over $\mathcal C_1$ and $\mathcal C_2$, and hence that over $\mathcal C$, define holomorphic functions of $t$ for $\re (t) < \frac{1}{24}$ extending to a continuous function for $\re (t) \leq \frac{1}{24}$.
Here we note that on $\mathcal C_1, \mathcal C_2$ we have $| e^{2\pi(\frac{1}{24} - t) Z} | \leq 1$ for $\re (t) \leq  \frac{1}{24}$.

\begin{figure}[h!]
	\vspace{-5pt}
	\centering
	\includegraphics[scale=0.21]{./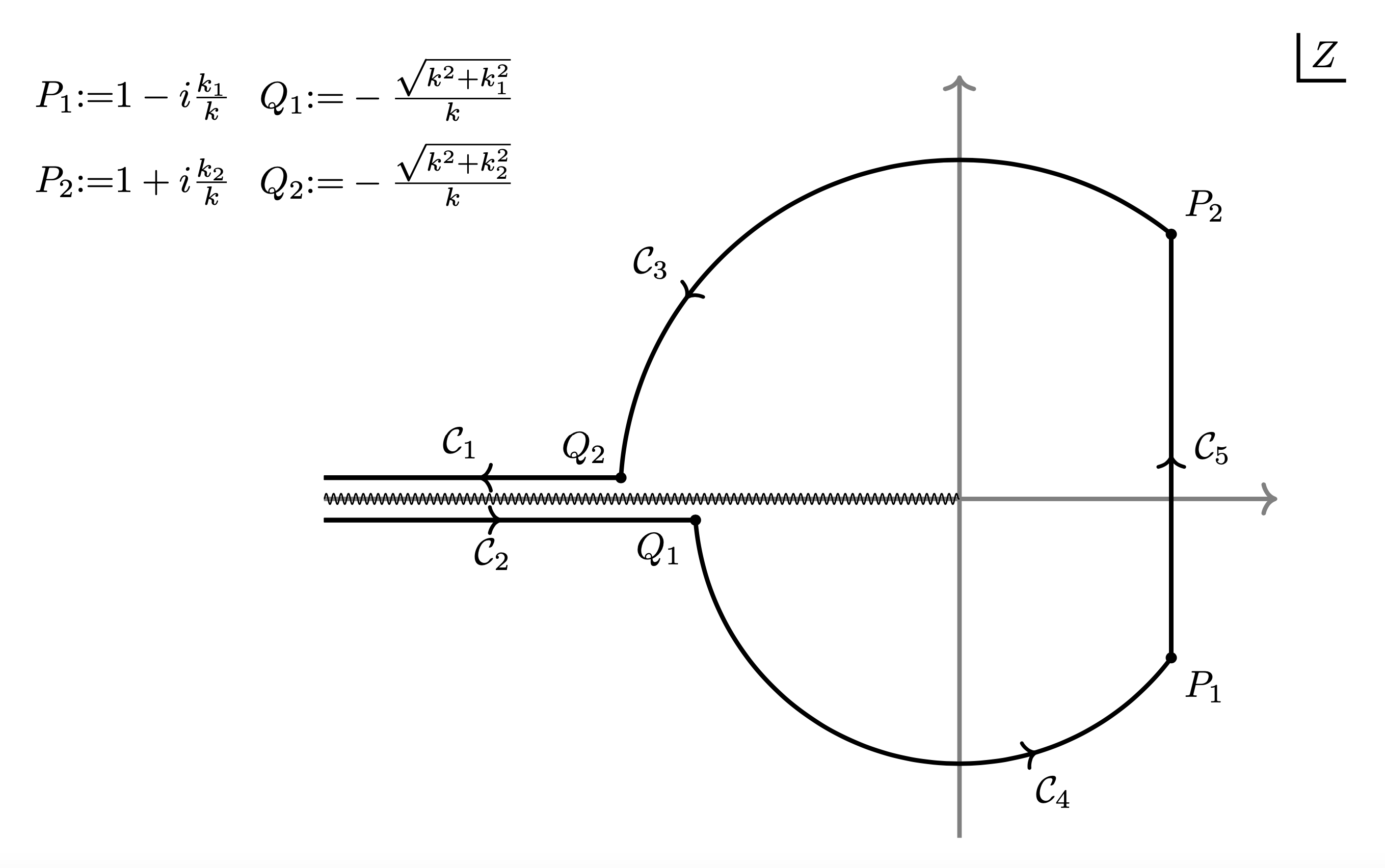}
		\vspace{-15pt}
	\caption{
		$\mathcal C_3$ and $\mathcal C_4$ are circular arcs centered at zero and $\mathcal C := \mathcal C_2 + \mathcal C_4 + \mathcal C_5 + \mathcal C_3 + \mathcal C_1$.
	}
	\label{fig:bessel_contour}
	\vspace{-5pt}
\end{figure}

\noindent We also decompose
\begin{equation}\label{phi_kernel_pole_removed_decomposition}
	\Phi_{\ell, \frac{h'}{k}} (t)
	=:
	\Phi^*_{\ell, \frac{h'}{k}} (t)
	+ \frac{d_0 \!\lp \frac{1}{48} \rp}{t-\frac{1}{48}} e^{\frac{2\pi i h'}{48k}} \d_{\ell,0},
\end{equation}
in order to separate the only pole of $\Phi_{\ell, \frac{h'}{k}} (t)$ that may lie in $[0,\frac{1}{24}]$ (see the comments after \eqref{eq:integral_kernel_definition}). We then define
\begin{align}\label{eq:gamma_j_r_definition}
	\g_j^{[r]} (n) &:= \frac{1}{\pi i} \sum_{\ell=0}^2
	\sum_{k=1}^{N} k^{-\frac{3}{2}}
	\sum_{\substack{0 \leq h < k \\ \gcd (h,k) = 1}}
	\!\!\!\!  \l_{h,k} (n, j,\ell)
	\int_0^{\frac{1}{24}} \Phi^*_{\ell, \frac{h'}{k}} (t)
	\int_{\mathcal C_r} Z^{-\frac{3}{2}}
	e^{2\pi \lp \frac{n+\DD_j}{k^2} \frac{1}{Z} + \lp \frac{1}{24} - t \rp Z \rp}
	dZ dt,
	\\ \label{eq:delta_j_r_definition}
	\d_j^{[r]} (n) &:= \frac{1}{\pi i}
	\sum_{k=1}^{N} k^{-\frac{3}{2}}
	\sum_{\substack{0 \leq h < k \\ \gcd (h,k) = 1}}
	\!\!\!\!  \l^*_{h,k} (n,j,0)
	\PV\int_0^{\frac{1}{24}} \frac{1}{t-\frac{1}{48}}
	\int_{\mathcal C_r} Z^{-\frac{3}{2}}
	e^{2\pi \lp \frac{n+\DD_j}{k^2} \frac{1}{Z} + \lp \frac{1}{24} - t \rp Z \rp}
	dZ dt,
\end{align}
where
\begin{equation*}
\l_{h,k} (n,j,\ell) :=
\psi_{h,k} (j,\ell) e^{-\frac{2 \pi i}{24k} \lp h' + 24\left(n+\DD_j\right) h \rp}
\andd
\l^*_{h,k} (n, j,\ell) := \l_{h,k} (n, j,\ell) e^{\frac{2 \pi i h'}{48k}}.
\end{equation*}
This leads to
\begin{multline}\label{eq:alpha_j_C_Cr_decomposition}
	\a_j (n) = \frac{1}{\pi i} \sum_{\ell=0}^2
	\sum_{k=1}^{N} k^{-\frac{3}{2}}
	\sum_{\substack{0 \leq h < k \\ \gcd (h,k) = 1}}
	\psi_{h,k} (j,\ell) e^{-\frac{2 \pi i}{24k} \lp h' + 24\left(n+\DD_j\right) h \rp}
	\PV\int_0^{\frac{1}{24}} \Phi_{\ell, \frac{h'}{k}} (t)\\ \times
	\int_{\mathcal C} Z^{-\frac{3}{2}}
	e^{2\pi \lp \frac{n+\DD_j}{k^2} \frac{1}{Z} + \lp \frac{1}{24} - t \rp Z \rp}
	dZ dt
	- \sum_{r=1}^4 \g_j^{[r]} (n)
	- d_0 \!\lp \frac{1}{48} \rp \sum_{r=1}^4 \d_j^{[r]} (n)
	+
	O\!\lp n^{\frac{3}{4}}\rp  .
\end{multline}

We next bound $\g_j^{[r]} (n)$ and $\d_j^{[r]} (n)$ for $r \in \{1,2,3,4\}$.
We start with estimating $\g_j^{[r]}$ for $r\in\{3,4\}$.

\begin{lem}\label{lem:gamma_j_3_bound}
For $j \in \{0,1,2\}$ and $n\in\mathbb{N}$ we have
\begin{equation*}
\g_j^{[3]} (n), \  \g_j^{[4]} (n) \ll n^{\frac{3}{4}}.
\end{equation*}
\end{lem}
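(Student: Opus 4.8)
The plan is to bound $\g_j^{[r]}(n)$ for $r \in \{3,4\}$ by estimating separately the three ingredients in its definition \eqref{eq:gamma_j_r_definition}. The coefficients satisfy $|\l_{h,k}(n,j,\ell)| = |\psi_{h,k}(j,\ell)| \ll 1$ uniformly: $|\nu_\eta(M_{h,k})| = 1$, and $\Psi_{M_{h,k}}(j,\ell)$ is a fixed linear combination of the quantities $\psi_{M,Q}(\bm\mu,\bm\nu)$ from \eqref{eq:theta_multiplier_system} with $c = k$, each of which is $O(1)$ because the diagonal shape of $Q$ and $B$ makes the sum over $\bm m \in \Z^2/k\Z^2$ factor into a product of two one-dimensional quadratic Gauss sums of size $\ll \sqrt{k}$, which is cancelled by the prefactor $\frac{1}{k\sqrt{|\det A|}}$. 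As there are at most $k$ admissible $h$, the lemma reduces to showing that the inner contour integral over $\mathcal C_r$ is $O(1)$ uniformly and that $\int_0^{\frac{1}{24}}\big|\Phi^*_{\ell,\frac{h'}{k}}(t)\big|\,dt \ll k$.

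For the contour integral I would use that, by construction (see Figure \ref{fig:bessel_contour}), $\mathcal C_3$ and $\mathcal C_4$ are arcs of a fixed circle $|Z| = \rho$ lying in the half-plane $\re(Z) \le 0$ where the integrand decays. There $|Z^{-\frac{3}{2}}| = \rho^{-\frac{3}{2}}$ and $\mathrm{length}(\mathcal C_r) \le 2\pi\rho$, and since $n + \DD_j > 0$, $\frac{1}{24} - t \ge 0$ for $t \in [0,\frac{1}{24}]$, and $\re(\frac{1}{Z}) = \frac{\re(Z)}{|Z|^2} \le 0$, we have
\[
\left| e^{2\pi\lp \frac{n+\DD_j}{k^2 Z} + \lp \frac{1}{24} - t \rp Z \rp} \right|
= e^{2\pi\lp \frac{n+\DD_j}{k^2}\re\!\lp\frac{1}{Z}\rp + \lp \frac{1}{24} - t \rp \re(Z) \rp}
\le 1 ,
\]
so the integral is $\ll \rho^{-\frac{1}{2}} \ll 1$, uniformly in $n$, $k$, $h'$, $j$, $\ell$, and $t$.

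For the kernel I would establish the pointwise bound $\big|\Phi^*_{\ell,\frac{h'}{k}}(t)\big| \ll k$ for $t \in [0,\frac{1}{24}]$ and $\gcd(h',k) = 1$; since $\Phi^*_{\ell,\frac{h'}{k}}$ is continuous on $[0,\frac{1}{24}]$ (the only pole of $\Phi_{\ell,\frac{h'}{k}}$ in that interval, at $t = \frac{1}{48}$ when $\ell = 0$, having been removed in \eqref{phi_kernel_pole_removed_decomposition}), this integrates to the desired $\int_0^{\frac{1}{24}}\big|\Phi^*_{\ell,\frac{h'}{k}}(t)\big|\,dt \ll k$. Upon writing $\Phi^*_{\ell,\frac{h'}{k}}(t)$ as the symmetric sum over $m \in \Z + \b_\ell$ (with the term $m = \frac{1}{48}$ omitted when $\ell = 0$) and splitting $\frac{1}{t-m} = \frac{t}{m(t-m)} - \frac{1}{m}$, the $\frac{t}{m(t-m)}$-part is absolutely convergent — every $m$ occurring is separated from $[0,\frac{1}{24}]$ by an absolute constant (using $0 < \b_\ell < 1$ and the removal of $m = \frac{1}{48}$), so $|t-m| \gg 1$, and $|d_\ell(m)| \ll \sqrt{|m|}$ from Corollary \ref{cor:dj_bound} makes it $O(1)$ — while the $-\frac{1}{m}$-part equals, up to a bounded term when $\ell = 0$, the symmetric sum $\psum_{m\in\Z+\b_\ell}\frac{d_\ell(m)e^{\frac{2\pi i h' m}{k}}}{m}$, which is $\ll k$ with implied constant independent of $h'$ by Lemma \ref{lem:1_over_n_term_bound}. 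Combining the three estimates gives
\[
|\g_j^{[r]}(n)| \ll \sum_{\ell=0}^2 \sum_{k=1}^N k^{-\frac{3}{2}} \cdot k \cdot k \ll \sum_{k=1}^N k^{\frac{1}{2}} \ll N^{\frac{3}{2}} \ll n^{\frac{3}{4}},
\]
as $N = \lfloor \sqrt{n} \rfloor$.

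The one genuinely delicate point is the uniform kernel bound $\big|\Phi^*_{\ell,\frac{h'}{k}}(t)\big| \ll k$ — notably its behaviour near the removed pole $t = \frac{1}{48}$ and the uniformity in $h'$ — but this is a routine adaptation of the summation-by-parts analysis already carried out in Lemmas \ref{lem:1_over_n_term_convergence} and \ref{lem:1_over_n_term_bound}; once it and the straightforward contour estimate are in place, the conclusion follows from the displayed counting.
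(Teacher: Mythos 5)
Your overall architecture matches the paper's: bound the multiplier by $O(1)$, bound the kernel by $\big|\Phi^*_{\ell,\frac{h'}{k}}(t)\big|\ll k$ uniformly in $t$ and $h'$ via the splitting $\frac{1}{t-m}=\frac{t}{m(t-m)}-\frac1m$ together with Corollary \ref{cor:dj_bound} and Lemma \ref{lem:1_over_n_term_bound} (this part is essentially identical to the paper's argument), bound the contour integral uniformly, and sum. The arithmetic at the end is also correct. The problem is the contour estimate.

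Your justification of the bound on $\int_{\mathcal C_r}$ rests on two false premises about the geometry. The arcs $\mathcal C_3$ and $\mathcal C_4$ are \emph{not} arcs of a fixed circle and do \emph{not} lie in the half-plane $\re(Z)\le 0$: $\mathcal C_3$ is the arc of the circle $|Z|=\frac{1}{k}\sqrt{k^2+k_2^2}$ (a radius depending on $k$ and the Farey neighbour $k_2$, of size $\asymp N/k$) running from the endpoint $1+i\frac{k_2}{k}$, where $\re(Z)=1$, around to the negative real axis. On the initial portion of this arc $\re\lp\frac1Z\rp>0$, so your inequality $\big|e^{2\pi(n+\DD_j)\re(1/Z)/k^2}\big|\le 1$ fails there, and a priori this factor could be as large as $e^{2\pi(n+\DD_j)/k^2}$, which is exponentially large in $n$ for small $k$. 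The factor is in fact $O(1)$, but only because of the Farey-sequence inequality $\frac{N^2}{2}<k^2+k_2^2$, which gives $\re\lp\frac1Z\rp\le\frac{k^2}{k^2+k_2^2}<\frac{2k^2}{N^2}$ on the whole arc, whence the exponent is $\le \frac{4\pi(n+\DD_j)}{N^2}\ll 1$ upon choosing $N=\lfloor\sqrt n\rfloor$. This is the essential mechanism of the Rademacher dissection and it is entirely absent from your argument; without it the claimed uniform bound on the contour integral is unjustified. (Once that inequality is invoked, one gets $|Z|^{-\frac32}\ll(k/N)^{\frac32}$ and $\mathrm{length}(\mathcal C_3)\ll N/k$, so the integral is $\ll(k/N)^{\frac12}\le 1$ and your final summation goes through.)
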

\begin{proof}
As $Z$ traverses the circular arc $\mathcal C_3$, $\frac{1}{Z}$ moves on a circular arc with center at zero from the angle $-\arctan (\frac{k_2}{k})$ to $-\pi$. The largest value of $\re (\frac{1}{Z})$ is obtained at the initial point and on $\mathcal C_3$
\begin{equation*}
	\re \!\lp \frac{1}{Z} \rp \leq \re \!\lp \frac{1}{1+i\frac{k_2}{k}} \rp
	= \frac{k^2}{k^2+k_2^2} < \frac{2k^2}{N^2},
\end{equation*}
where the last inequality follows from \eqref{eq:k_kj_sq_bound}.
We also have $\re (Z) \leq 1$ on $\mathcal C_3$ so that (for $0 \leq t \leq \frac{1}{24}$)
\begin{equation*}
	\left| e^{2\pi \lp \frac{n+\DD_j}{k^2} \frac{1}{Z} + \lp \frac{1}{24} - t \rp Z \rp} \right|
	\ll 1 .
\end{equation*}
Moreover, by \eqref{eq:k_kj_sq_bound} we find that on $\mathcal C_3$ we have
\begin{equation*}
|Z|^{-1} \ll \frac{k}{N}
\andd
\mathrm{length} (\mathcal C_3) \ll \frac{N}{k} .
\end{equation*}
To bound $\Phi^*_{\ell, \frac{h'}{k}} (t)$ over $0 \leq t \leq \frac{1}{24}$, on the other hand, we start by writing
\begin{equation*}
\Phi^*_{\ell, \frac{h'}{k}} (t)
=
\sum_{\substack{m\in\Z+\b_\ell \\ m \neq \frac{1}{48}}}
	\frac{t d_\ell(m)}{(t-m)m}  e^{ \frac{2 \pi ih'm}{k}}
	-
	\psum_{m\in\Z+\b_\ell}
	\frac{d_\ell(m)}{m} e^{ \frac{2 \pi ih'm}{k}}
	+ 48 d_0 \!\lp \frac{1}{48} \rp \! \d_{\ell,0} e^{\frac{2 \pi i h'}{48k}} .
\end{equation*}
By Corollary \ref{cor:dj_bound}, the first term is $\ll 1$ uniformly in $t$ and $h'$. The same bound holds for the third term.
Finally, we use Lemma \ref{lem:1_over_n_term_bound} to bound the second term and find
\begin{equation}\label{eq:PhiStar_bound}
\Phi^*_{\ell, \frac{h'}{k}} (t) \ll k
\end{equation}
uniformly in $t$ and $h'$. So gathering our results together with $| \l_{h,k} (n, j,\ell) | \leq 1$ we find
\begin{equation*}
	\left|\g_j^{[3]} (n)\right| \ll \sum_{k=1}^{N} k^{-\frac{3}{2}}  \sum_{0 \leq h < k}
	k \lp \frac{k}{N} \rp^{\frac{3}{2}} \mathrm{length} (\mathcal C_3)  \ll
	\frac{1}{\sqrt{N}} \sum_{k=1}^N k \ll N^{\frac{3}{2}} ,
\end{equation*}
which yields the claim from $N = \lfloor \sqrt{n} \rfloor$.
Exactly the same proof yields the result for $\g_j^{[4]}$.
\end{proof}

We next bound $\d_j^{[\ell]}$ for $\ell\in\{3,4\}$.

\begin{lem}\label{lem:delta_j_3_bound}
For $j \in \{0,1,2\}$ and $n\in\mathbb{N}$ we have
\begin{equation*}
\d_j^{[3]} (n) , \  \d_j^{[4]} (n) \ll \log (n) n^{\frac{1}{4}}.
\end{equation*}
\end{lem}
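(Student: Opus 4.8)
The plan is to follow the proof of Lemma~\ref{lem:gamma_j_3_bound} almost verbatim, the only genuinely new point being the treatment of the Cauchy principal value in $t$. Abbreviate (suppressing the dependence on $n$ and $j$)
\begin{equation*}
g_{h,k}(t) := \int_{\mathcal C_3} Z^{-\frac32}\, e^{2\pi \lp \frac{n+\DD_j}{k^2}\frac1Z + \lp \frac1{24}-t\rp Z\rp}\, dZ ,
\end{equation*}
so that by \eqref{eq:delta_j_r_definition}
\begin{equation*}
\d_j^{[3]}(n) = \frac1{\pi i}\sum_{k=1}^N k^{-\frac32}\sum_{\substack{0\le h<k\\ \gcd(h,k)=1}}\l^*_{h,k}(n,j,0)\,\PV\!\int_0^{\frac1{24}}\frac{g_{h,k}(t)}{t-\frac1{48}}\,dt ,
\end{equation*}
and similarly for $\d_j^{[4]}(n)$ with $\mathcal C_4$ in place of $\mathcal C_3$. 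First I would reuse the estimates on $\mathcal C_3$ already recorded inside Lemma~\ref{lem:gamma_j_3_bound}: for $0\le t\le\frac1{24}$ and $N=\lfloor\sqrt n\rfloor$ one has $|e^{2\pi(\frac{n+\DD_j}{k^2}\frac1Z+(\frac1{24}-t)Z)}|\ll1$, together with $|Z|^{-1}\ll k/N$ and $\mathrm{length}(\mathcal C_3)\ll N/k$. Since $\mathcal C_3$ avoids $Z=0$, the function $g_{h,k}$ is entire in $t$ and differentiation under the integral sign gives $g_{h,k}'(t)=-2\pi\int_{\mathcal C_3}Z^{-\frac12}e^{2\pi(\frac{n+\DD_j}{k^2}\frac1Z+(\frac1{24}-t)Z)}\,dZ$; hence, uniformly in $t\in[0,\frac1{24}]$ and in $h'$,
\begin{equation*}
g_{h,k}'(t)\ll\lp\frac kN\rp^{\frac12}\cdot\frac Nk=\lp\frac Nk\rp^{\frac12}.
\end{equation*}

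The key step is the principal value. Because $\frac1{48}$ is exactly the midpoint of $[0,\frac1{24}]$ we have $\PV\!\int_0^{1/24}\frac{dt}{t-1/48}=0$, and therefore
\begin{equation*}
\PV\!\int_0^{\frac1{24}}\frac{g_{h,k}(t)}{t-\frac1{48}}\,dt=\int_0^{\frac1{24}}\frac{g_{h,k}(t)-g_{h,k}\!\lp\frac1{48}\rp}{t-\frac1{48}}\,dt ,
\end{equation*}
which is now an ordinary integral whose integrand is bounded by $\sup_{[0,1/24]}|g_{h,k}'|\ll(N/k)^{1/2}$; thus the entire $t$-integral is $\ll(N/k)^{1/2}$, uniformly in $h'$.

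Substituting this back, and using $|\l^*_{h,k}(n,j,0)|\le1$ together with the fact that the inner sum has at most $k$ terms, we would obtain
\begin{equation*}
\left|\d_j^{[3]}(n)\right|\ll\sum_{k=1}^N k^{-\frac32}\cdot k\cdot\lp\frac Nk\rp^{\frac12}=N^{\frac12}\sum_{k=1}^N\frac1k\ll\log(N)\,N^{\frac12}\ll\log(n)\,n^{\frac14},
\end{equation*}
since $N=\lfloor\sqrt n\rfloor$; the identical argument with $\mathcal C_4$ in place of $\mathcal C_3$ yields the bound for $\d_j^{[4]}(n)$.

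The only real obstacle is the principal value, and the whole argument hinges on the pole $t=\frac1{48}$ sitting precisely at the midpoint of the integration interval, so that the bare $\PV$ integral of $1/(t-\frac1{48})$ vanishes and one is reduced to a derivative bound for $g_{h,k}$. Compared with Lemma~\ref{lem:gamma_j_3_bound} we lose the factor $\Phi^*_{\ell,h'/k}\ll k$ but also gain one power of $Z$ in passing from $g_{h,k}$ to $g_{h,k}'$ (since $Z\cdot Z^{-3/2}=Z^{-1/2}$ while $|Z|\asymp N/k$ on $\mathcal C_3$); together these two effects turn the $\sum_{k\le N}k$ of that lemma into $\sum_{k\le N}k^{-1}$, which is the source of the $\log(n)$.
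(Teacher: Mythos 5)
Your proof is correct and follows essentially the same route as the paper: both exploit $\PV\int_0^{1/24}\frac{dt}{t-1/48}=0$ to subtract the value at the pole and then bound the resulting difference quotient via a mean-value estimate, combined with the same $\mathcal C_3$ bounds $|Z|^{-1/2}\ll\sqrt{k/N}$ and $\mathrm{length}(\mathcal C_3)\ll N/k$. The only cosmetic difference is that you apply the derivative bound to the full $Z$-integral $g_{h,k}(t)$, whereas the paper applies the mean value inequality to the factor $e^{2\pi(\frac{1}{24}-t)Z}$ inside the $Z$-integral before integrating; the two are equivalent and yield the identical $\sqrt{N}\log N$ bound.
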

\begin{proof}
Noting that the integral over $\mathcal C_3$ gives an entire function of $t$ as we state before Figure \ref{fig:bessel_contour} and that we have
$\PV\int_0^{\frac{1}{24}} \frac{1}{t-\frac{1}{48}} dt = 0$, we start by rewriting
$\d_j^{[3]}$ defined in \eqref{eq:delta_j_r_definition} as
\begin{equation*}
\d_j^{[3]} (n) = \frac{1}{\pi i}
	\sum_{k=1}^{N} k^{-\frac{3}{2}}
	\sum_{\substack{0 \leq h < k \\ \gcd (h,k) = 1}}
	\l^*_{h,k} (n,j,0)
	\int_0^{\frac{1}{24}}
	\int_{\mathcal C_3} Z^{-\frac{3}{2}}
	e^{2\pi \frac{n+\DD_j}{k^2} \frac{1}{Z}}
	\frac{e^{2\pi \lp \frac{1}{24} - t \rp Z} - e^{\frac{\pi Z}{24}}}{t-\frac{1}{48}}
	dZ dt .
\end{equation*}
Then using the mean value inequality on the complex-valued function $t \mapsto e^{2\pi (\frac{1}{24} - t) Z}$, we find
\begin{equation*}
\left| \frac{e^{2\pi \lp \frac{1}{24} - t \rp Z}  - e^{\frac{\pi Z}{24}} }{t-\frac{1}{48}}
\right|
\leq 2 \pi |Z| e^{2\pi \lp \frac{1}{24} - \xi \rp \re (Z)}
\end{equation*}
for some $\xi$ between $\frac{1}{48}$ and $t$ and including the limit $t \to \frac{1}{48}$.
Since $t \in [0,\frac{1}{24}]$ (implying $\frac{1}{24} - \xi < \frac{1}{24}$) and since $\re (Z) \leq 1$ on $\mathcal C_3$, we have
\begin{equation*}
\left| \frac{e^{2\pi \lp \frac{1}{24} - t \rp Z}  - e^{\frac{2\pi Z}{48}} }{t-\frac{1}{48}}
\right|
\leq  2 \pi e^{\frac{\pi}{12}} |Z| .
\end{equation*}
Moreover, as in the proof of Lemma \ref{lem:gamma_j_3_bound},
on $\mathcal C_3$ we have the bounds
\begin{equation}\label{eq:delta_j_bound_C3_bounds}
	\left| e^{2\pi \frac{n+\DD_j}{k^2} \frac{1}{Z}} \right| \ll 1, \qquad
	|Z|^{-\frac{1}{2}} \ll \sqrt{\frac{k}{N}}, \andd
	\mathrm{length} (\mathcal C_3) \ll \frac{N}{k}  .
\end{equation}
So together with $|\l^*_{h,k} (n,j,0)| \leq 1$ we find
\begin{equation*}
	\left| \d_j^{[3]} (n) \right| \ll
	\sum_{k=1}^{N} k^{-\frac{3}{2}} \sum_{0 \leq h < k}
	\sqrt{\frac{N}{k}}
	\ll \sqrt{N} \log (N)
\end{equation*}
which implies the lemma with $N = \lfloor \sqrt{n} \rfloor$.
Since the bound $\re (Z) \leq 1$ as well as those in \eqref{eq:delta_j_bound_C3_bounds} also hold for $\mathcal C_4$, the exactly the same proof yields the result for $\d_j^{[4]} (n)$ as well.
\end{proof}

We continue with $\g_j^{[\ell]}$ for $\ell\in\{1,2\}$.

\begin{lem}\label{lem:gamma_j_1_bound}
For $j \in \{0,1,2\}$ and $n\in\mathbb{N}$ we have
\begin{equation*}
\g_j^{[1]} (n), \  \g_j^{[2]} (n) \ll n^{\frac{3}{4}}.
\end{equation*}
\end{lem}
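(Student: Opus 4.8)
The plan is to mimic the proof of Lemma \ref{lem:gamma_j_3_bound}, but now working on the near-vertical contours $\mathcal C_1$ and $\mathcal C_2$ rather than on the circular arcs $\mathcal C_3,\mathcal C_4$. The key difference is that on $\mathcal C_1,\mathcal C_2$ the real part of $Z$ is bounded below by something like $1$ (these are the segments with $\re(Z)\ge 1$ emanating vertically from $Z=1$), so the exponential factor $e^{2\pi(\frac{1}{24}-t)Z}$ is $\ll 1$ for $0\le t\le\frac{1}{24}$ as already noted in the text just before Figure \ref{fig:bessel_contour}, while $e^{2\pi\frac{n+\DD_j}{k^2}\frac1Z}$ is now the potentially large factor and must be controlled by $\re(\frac1Z)\ll \frac{k^2}{N^2}$. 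The first step is therefore to record, using \eqref{eq:k_kj_sq_bound} and the geometry of Figure \ref{fig:bessel_contour}, the bounds $\re(\tfrac1Z)\le \re\!\big(\tfrac{1}{1+ik_j/k}\big)=\tfrac{k^2}{k^2+k_j^2}<\tfrac{2k^2}{N^2}$ on $\mathcal C_1\cup\mathcal C_2$, together with $|Z|\ge 1$ (so $|Z|^{-3/2}\le 1$) and $\mathrm{length}(\mathcal C_1),\mathrm{length}(\mathcal C_2)\ll \frac{k_j}{k}\ll \frac{N}{k}$.

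Combining these with $\big|e^{2\pi(\frac{1}{24}-t)Z}\big|\le 1$ for $\re(t)\le\frac1{24}$ gives
$\big|\int_{\mathcal C_r}Z^{-3/2}e^{2\pi(\frac{n+\DD_j}{k^2}\frac1Z+(\frac1{24}-t)Z)}dZ\big|\ll e^{4\pi(n+\DD_j)/N^2}\cdot\frac{N}{k}\ll \frac{N}{k}$ for $r\in\{1,2\}$, since $N=\lfloor\sqrt n\rfloor$ makes the exponential $\ll 1$. Next I would invoke \eqref{eq:PhiStar_bound}, namely $\Phi^*_{\ell,\frac{h'}{k}}(t)\ll k$ uniformly in $t\in[0,\frac1{24}]$ and in $h'$ (proved already in Lemma \ref{lem:gamma_j_3_bound}), and $|\l_{h,k}(n,j,\ell)|\le 1$. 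Plugging everything into the definition \eqref{eq:gamma_j_r_definition} of $\g_j^{[r]}$ yields
\begin{equation*}
\big|\g_j^{[r]}(n)\big|\ll \sum_{k=1}^{N}k^{-\frac32}\sum_{0\le h<k}k\cdot\frac{N}{k}\ll N\sum_{k=1}^N k^{-\frac12}\ll N^{\frac32},
\end{equation*}
so $\g_j^{[1]}(n),\g_j^{[2]}(n)\ll n^{3/4}$, as claimed.

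I expect no serious obstacle: this is the "easy" pair of contours because the only genuinely large quantity, $e^{2\pi\frac{n+\DD_j}{k^2}\frac1Z}$, is tamed by the same Farey estimate \eqref{eq:k_kj_sq_bound} used throughout, and the integrand $\Phi^*$ has had its singularity at $t=\frac1{48}$ removed so the ordinary (non-principal-value) integral over the compact interval $[0,\frac1{24}]$ is harmless. The one point requiring a little care is making sure $\re(Z)\ge 1$ — equivalently $\re(\frac1Z)\le 1$ together with the sharper $\re(\frac1Z)<\frac{2k^2}{N^2}$ — holds on all of $\mathcal C_1$ and $\mathcal C_2$; this follows because those contours are the portions of the vertical lines through $Z=1$ (from $1$ to $1\pm i\frac{k_j}{k}$) lying outside the arcs $\mathcal C_3,\mathcal C_4$, exactly as depicted in Figure \ref{fig:bessel_contour}, and on such a vertical segment $\re(Z)=1$ identically while $\big|\frac1Z\big|$ is maximized at the endpoint $1\pm i\frac{k_j}{k}$.
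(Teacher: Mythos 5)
There is a genuine gap: you have misidentified the contours $\mathcal C_1$ and $\mathcal C_2$. They are not bounded, near-vertical segments with $\re(Z)\ge 1$ emanating from $Z=1$ (that portion of the picture is $\mathcal C_5$, the original path from $1-i\frac{k_1}{k}$ to $1+i\frac{k_2}{k}$). In the paper's decomposition $\mathcal C=\mathcal C_2+\mathcal C_4+\mathcal C_5+\mathcal C_3+\mathcal C_1$ is a Hankel-type contour, and $\mathcal C_1,\mathcal C_2$ are the two \emph{infinite} rays along the negative real axis, running from $-\frac1k\sqrt{k^2+k_j^2}$ out to $-\infty$ above and below the branch cut. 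Three pieces of internal evidence force this reading: the paper's proof of this very lemma substitutes $Z=-x$ and integrates $x$ from $\frac1k\sqrt{k^2+k_2^2}$ to $\infty$; the text states that the integrals over $\mathcal C_1,\mathcal C_2$ are holomorphic only for $\re(t)<\frac1{24}$ (an integral over a compact contour would be entire in $t$); and the asserted bound $|e^{2\pi(\frac1{24}-t)Z}|\le 1$ on $\mathcal C_1,\mathcal C_2$ holds precisely because $\re(Z)\le 0$ there, whereas on your proposed segments with $\re(Z)=1$ one only gets $|e^{2\pi(\frac1{24}-t)Z}|\le e^{\pi/12}$.

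Because of this, your estimate collapses: the quantities $\mathrm{length}(\mathcal C_r)\ll\frac{N}{k}$ and $|Z|^{-3/2}\le 1$ are meaningless on an unbounded ray, and bounding the integral by (sup of integrand)$\times$(length) is not available. The correct replacement, which is what the paper does, is to use that both exponentials have modulus at most $1$ on the negative real axis (since $\re(Z)<0$ and $\re(1/Z)<0$ there) and that
\begin{equation*}
\int_{\frac1k\sqrt{k^2+k_2^2}}^{\infty} x^{-\frac32}\, dx \ll \frac{k^{\frac12}}{\lp k^2+k_2^2\rp^{\frac14}} \ll \sqrt{\frac{k}{N}},
\end{equation*}
which combined with $\Phi^{*}_{\ell,\frac{h'}{k}}(t)\ll k$ gives a contribution $\ll k^{\frac32}N^{-\frac12}$ per pair $(h,k)$ and hence $\ll N^{\frac32}=n^{\frac34}$ after summation. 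Your final numerology happens to land on the same exponent, but the argument as written bounds integrals over the wrong curves and so does not prove the lemma; the repair requires replacing the length-times-sup estimate by the convergence of $\int x^{-3/2}\,dx$ on the tail.
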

\begin{proof}
We first consider $\g_j^{[1]} (n)$ and let $Z = -x$ (with $Z^{-\frac{3}{2}} = e^{-\frac{3\pi i}{2}} x^{-\frac{3}{2}}$ above the cut) to write
\begin{equation*}
\g_j^{[1]} (n) = -\tfrac{1}{\pi} \sum_{\ell=0}^2
	\sum_{k=1}^{N} k^{-\frac{3}{2}} \!
	\sum_{\substack{0 \leq h < k \\ \gcd (h,k) = 1}} \!
	\l_{h,k} (n, j,\ell) \int_0^{\frac{1}{24}} \Phi^*_{\ell, \frac{h'}{k}} (t)
	\int_{\frac{1}{k} \sqrt{k^2+k_2^2}}^\infty x^{-\frac{3}{2}}
	e^{-2\pi \lp \frac{n+\DD_j}{k^2} \frac{1}{x} + \lp \frac{1}{24} - t \rp x \rp}
	dx dt .
\end{equation*}
Using the bound on $\Phi^*_{\ell, \frac{h'}{k}} (t)$ in \eqref{eq:PhiStar_bound} and that $e^{-2\pi ( \frac{n+\DD_j}{k^2} \frac{1}{x} + (\frac{1}{24} - t) x )}  \leq 1$ for $0 \leq t \leq \frac{1}{24}$, we get
\begin{equation*}
	\left| \int_0^{\frac{1}{24}} \Phi^*_{\ell, \frac{h'}{k}} (t)
	\int_{\frac{1}{k} \sqrt{k^2+k_2^2}}^\infty x^{-\frac{3}{2}}
	e^{-2\pi \lp \frac{n+\DD_j}{k^2} \frac{1}{x} + \lp \frac{1}{24} - t \rp x \rp}
	dx dt \right|
	\ll
	\frac{k^{\frac{3}{2}}}{\lp k^2+k_2^2 \rp^{\frac{1}{4}}}
	\ll
	\frac{k^{\frac{3}{2}}}{\sqrt{N}}
\end{equation*}
with the last estimate following from \eqref{eq:k_kj_sq_bound}. Inserting this in the expression for $\g_j^{[1]} (n)$ together with $| \l_{h,k} (n, j,\ell) | \leq 1$ then yields the lemma.
The proof for $\g_j^{[2]}$ is exactly the same.
\end{proof}

Our final task is to bound $\d_j^{[\ell]}$ for $\ell\in\{1,2\}$.

\begin{lem}\label{lem:delta_j_1_bound}
For $j \in \{0,1,2\}$ and $n\in\mathbb{N}$ we have
\begin{equation*}
\d_j^{[1]} (n), \  \d_j^{[2]} (n) \ll n^{\frac{1}{4}}.
\end{equation*}
\end{lem}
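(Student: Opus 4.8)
The plan is to adapt the argument for $\g_j^{[1]}$ in Lemma \ref{lem:gamma_j_1_bound} by combining the parametrization of the infinite tails $\mathcal{C}_1,\mathcal{C}_2$ with the principal-value subtraction trick already used for $\d_j^{[3]}$ in the proof of Lemma \ref{lem:delta_j_3_bound}. I will treat $\d_j^{[1]}$ in detail; the case of $\d_j^{[2]}$ is identical, replacing $\mathcal{C}_1$ by $\mathcal{C}_2$ (which only changes $k_2$ to $k_1$ in the lower endpoint of the tail, and this is immaterial by \eqref{eq:k_kj_sq_bound}).

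First I would set $Z=-x$ on $\mathcal{C}_1$ as in Lemma \ref{lem:gamma_j_1_bound}, so that $x$ runs over $[a_k,\infty)$ with $a_k:=\tfrac1k\sqrt{k^2+k_2^2}$, and write the inner contour integral as
\[
G_k(t):=\int_{\mathcal{C}_1}Z^{-\frac32}e^{2\pi\left(\frac{n+\DD_j}{k^2}\frac1Z+\left(\frac1{24}-t\right)Z\right)}dZ,
\]
which, up to a unimodular constant, equals $\int_{a_k}^\infty x^{-\frac32}e^{-2\pi(\frac{n+\DD_j}{k^2}\frac1x+(\frac1{24}-t)x)}dx$ and is holomorphic in $t$ for $\re(t)<\tfrac1{24}$, continuous up to $\re(t)\le\tfrac1{24}$. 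Since $\tfrac1{48}$ is the midpoint of $[0,\tfrac1{24}]$, one has $\PV\int_0^{1/24}\frac{dt}{t-1/48}=0$, so the principal-value integral appearing in \eqref{eq:delta_j_r_definition} can be rewritten as the ordinary integral $\int_0^{1/24}\frac{G_k(t)-G_k(1/48)}{t-1/48}\,dt$ of a function continuous on $[0,\tfrac1{24}]$.

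The main step is to bound this integrand by an absolute constant, and the one real obstacle is that, unlike on the bounded arcs $\mathcal{C}_3,\mathcal{C}_4$, the integral $G_k$ over the infinite tail has no convergent $t$-derivative as $t\to\tfrac1{24}$: there the naive mean value bound produces $x^{-3/2}\cdot x=x^{-1/2}$, which is not integrable on $[a_k,\infty)$. I would circumvent this by splitting $[0,\tfrac1{24}]=[0,\tfrac1{32}]\cup[\tfrac1{32},\tfrac1{24}]$, with $\tfrac1{32}$ chosen strictly between $\tfrac1{48}$ and $\tfrac1{24}$. On $[0,\tfrac1{32}]$, the mean value inequality applied to $s\mapsto e^{2\pi(\frac1{24}-s)Z}$ (as in the proof of Lemma \ref{lem:delta_j_3_bound}) contributes a factor $2\pi|Z|e^{2\pi(\frac1{24}-\xi)\re(Z)}=2\pi x\,e^{-2\pi(\frac1{24}-\xi)x}$ with $\xi$ between $\tfrac1{48}$ and $t$, hence $\xi\le\tfrac1{32}$ and $\tfrac1{24}-\xi\ge\tfrac1{96}$; after dropping $e^{-2\pi\frac{n+\DD_j}{k^2}\frac1x}\le1$, the integrand is $\le2\pi\int_{a_k}^\infty x^{-1/2}e^{-\pi x/48}\,dx\le2\pi\int_0^\infty x^{-1/2}e^{-\pi x/48}\,dx\ll1$. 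On $[\tfrac1{32},\tfrac1{24}]$ one instead uses $|t-\tfrac1{48}|\ge\tfrac1{96}$ together with $|G_k(t)|\le\int_{a_k}^\infty x^{-3/2}\,dx=2a_k^{-1/2}\ll\sqrt{k/N}\le1$ (by \eqref{eq:k_kj_sq_bound}), so the integrand is $\ll|G_k(t)|+|G_k(\tfrac1{48})|\ll1$. Thus the principal-value integral is $\ll1$, uniformly in $n$, $k$, and $h$.

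Finally, feeding this back into \eqref{eq:delta_j_r_definition} and using $|\l^*_{h,k}(n,j,0)|\le1$, I would conclude
\[
\left|\d_j^{[1]}(n)\right|\ll\sum_{k=1}^N k^{-\frac32}\sum_{\substack{0\le h<k\\\gcd(h,k)=1}}1\ll\sum_{k=1}^N k^{-\frac12}\ll\sqrt N\ll n^{\frac14}
\]
since $N=\lfloor\sqrt n\rfloor$, and likewise for $\d_j^{[2]}$. The only point where care is genuinely needed is the splitting of the $t$-interval away from $\tfrac1{24}$; everything else parallels the estimates already carried out for $\g_j^{[1]}$ and $\d_j^{[3]}$.
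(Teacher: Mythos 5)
Your proof is correct and follows the paper's strategy almost verbatim: the same principal-value subtraction using $\PV\int_0^{1/24}\frac{dt}{t-1/48}=0$, the same parametrization $Z=-x$ of the tails, and the same mean value bound, ending with $\sum_{k\le N}k^{-1/2}\ll\sqrt N$. The only point of divergence is how you handle the failure of integrability of the naive bound $x^{-1/2}$ near $t=\frac1{24}$: you split the $t$-interval at $\frac1{32}$ and use the mean value estimate only where $\frac1{24}-\xi$ is bounded below, falling back on the trivial bound $|G_k(t)|\le 2a_k^{-1/2}\ll1$ on $[\frac1{32},\frac1{24}]$ where $|t-\frac1{48}|\ge\frac1{96}$. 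The paper instead keeps a single estimate on all of $[0,\frac1{24}]$ by observing that $2\bigl(\frac1{24}-\xi\bigr)\ge\frac1{24}-t$ for $\xi$ between $\frac1{48}$ and $t$, so the mean value bound can be weakened to $2\pi x\,e^{-\pi(\frac1{24}-t)x}$; the $x$-integral then produces an integrable singularity $\bigl(\frac1{24}-t\bigr)^{-1/2}$ after the substitution $x\mapsto x/(\frac1{24}-t)$. Both devices yield the same $O(1)$ bound for the double integral uniformly in $h$ and $k$, so the difference is purely cosmetic; your version is arguably slightly more elementary, the paper's slightly more uniform in that it avoids case analysis.
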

\begin{proof}
We next consider $\d_j^{[1]}$
and again use that $\PV\int_0^{\frac{1}{24}} \frac{1}{t-\frac{1}{48}} dt = 0$ to write (with $Z=-x$)
\begin{multline*}
	\d_j^{[1]} (n) = - \frac{1}{\pi}
	\sum_{k=1}^{N} k^{-\frac{3}{2}} \!\!\!
	\sum_{\substack{0 \leq h < k \\ \gcd (h,k) = 1}} \!\!\!
	\l^*_{h,k} (n,j,0)
	\int_0^{\frac{1}{24}}
	\int_{\frac{1}{k} \sqrt{k^2+k_2^2}}^\infty x^{-\frac{3}{2}}
	e^{- 2\pi \frac{n+\DD_j}{k^2} \frac{1}{x}}
	\frac{e^{- 2\pi \lp \frac{1}{24} - t \rp x} - e^{- \frac{\pi x}{24}} }{t-\frac{1}{48}}
	dx dt .
\end{multline*}
Now we use the fact that $e^{- 2\pi \frac{n+\DD_j}{k^2} \frac{1}{x}} \leq 1$ and  that
\begin{equation*}
	\left| \frac{e^{- 2\pi \lp \frac{1}{24} - t \rp x} - e^{- \frac{\pi x}{24}} }{t-\frac{1}{48}} \right|
	= 2 \pi x e^{- 2\pi \lp \frac{1}{24} - \xi \rp x}
\end{equation*}
for some $\xi$ between $\frac{1}{48}$ and $t$.
For $0 \leq t \leq \frac{1}{24}$, we bound the right-hand side above by
$2 \pi x e^{- \pi \lsp \frac{1}{24}  -t \rsp x}$.
Therefore, with $|\l^*_{h,k} (n,j,0)| \leq 1$ we find
\begin{equation*}
	\left| \d_j^{[1]} (n) \right| \leq 2 \sum_{k=1}^{N} k^{-\frac{3}{2}}
	\sum_{0 \leq h < k}
	\int_0^{\frac{1}{24}}
	\int_{\frac{1}{k} \sqrt{k^2+k_2^2}}^\infty
	\frac{e^{- \pi \lp \frac{1}{24}-t \rp x}}{\sqrt{x}}
	dx dt .
\end{equation*}
Making the change of variables $x \mapsto \frac{x}{\frac{1}{24}-t}$ and extending the integration range in $x$ to $\R^+$ gives
\begin{equation*}
\left| \d_j^{[1]} (n) \right| \leq
2 \sum_{k=1}^{N} \frac{1}{\sqrt{k}}
	\int_0^{\frac{1}{24}} \frac{dt}{\sqrt{\frac{1}{24}-t}}
	\int_0^\infty \frac{e^{- \pi x}}{\sqrt{x}} dx
	\ll \sqrt{N} .
\end{equation*}
The lemma follows from $N = \lfloor \sqrt{n} \rfloor$. The same proof also yields the result for $\d_j^{[2]}$.
\end{proof}

Our final goal is to go back to equation \eqref{eq:alpha_j_C_Cr_decomposition} and evaluate the integral over $Z$ for $0 < t < \frac{1}{24}$, which continuously extends to $0 \leq t \leq \frac{1}{24}$ as we state before Figure \ref{fig:bessel_contour}.
We start by recalling (e.g.~from equations 8.406.1 and 8.412.2 of \cite{GR}) that for $\re(w) > 0$ we have
\begin{equation*}
	I_\nu (w) = \frac{\lp \frac{w}{2} \rp^\nu}{2 \pi i} \int_{\mathcal D}
	z^{-\nu-1} e^{z + \frac{w^2}{4z}} dz ,
\end{equation*}
where $\mathcal D$ is Hankel's contour that starts at $-\infty$ from below the real line, circles around the negative real axis, and then goes back to $-\infty$ from above the real line.
Then we can evaluate
\begin{equation*}
\int_{\mathcal C} Z^{-\frac{3}{2}}
	e^{2\pi \lp \frac{n+\DD_j}{k^2} \frac{1}{Z} + \lp \frac{1}{24} - t \rp Z \rp} dZ
	=
	2\pi i \sqrt{k} \lp \frac{\frac{1}{24} - t}{n+\DD_j} \rp^{\frac{1}{4}}
	I_{\frac{1}{2}} \!\lp \frac{4 \pi}{k} \sqrt{\left(n+\DD_j\right) \lp \frac{1}{24} - t \rp} \rp  .
\end{equation*}
Inserting this in \eqref{eq:alpha_j_C_Cr_decomposition} while recalling Lemmas
\ref{lem:gamma_j_3_bound}, \ref{lem:delta_j_3_bound}, \ref{lem:gamma_j_1_bound}, and
\ref{lem:delta_j_1_bound} we find
\begin{multline}\label{eq:alpha_j_asymptotic_expansion_up_to_power}
\a_j (n) = \frac{2}{\left(n+\DD_j\right)^{\frac{1}{4}}} \sum_{\ell=0}^2
\sum_{k=1}^{\left\lfloor \sqrt{n} \right\rfloor} \frac{1}{k}
\sum_{\substack{0 \leq h < k \\ \gcd (h,k) = 1}}
\psi_{h,k} (j,\ell)  e^{-2 \pi i \lp \frac{h'}{24k} + \left(n+\DD_j\right) \frac{h}{k} \rp}
\\
\times \PV\int_0^{\frac{1}{24}} \Phi_{\ell, \frac{h'}{k}} (t)
\lp \frac{1}{24} - t \rp^{\frac{1}{4}}
I_{\frac{1}{2}} \!\lp \frac{4 \pi}{k} \sqrt{\left(n+\DD_j\right) \lp \frac{1}{24} - t \rp} \rp dt
+
O\!\lp n^{\frac{3}{4}}\rp .
\end{multline}
This completes the proof of the final result, Theorem \ref{T: Main Theorem} (including the case $j=2$).

\section{Proof of Corollary \ref{cor:leading_exponential_first_few_example}}\label{sec:leading_exponential}
We next extract explicit expressions for the main exponential term and prove Corollary \ref{cor:leading_exponential_first_few_example}.
We start by bounding the contributions aside from those for $k=1$ and near $t=0$.

\begin{lem}\label{lem:leading_exponential_k1_t0}
For $n\in\mathbb{N}$ and $j \in \{0,1,2\}$ we have
\begin{equation*}
\a_j (n) = \frac{1}{\pi\sqrt{2\left(n+\DD_j\right)}} \sum_{\ell=0}^2 \Psi_S (j,\ell)
\int_0^{\frac{1}{96}} \Phi_{\ell,0} (t)   e^{4\pi \sqrt{\left(n+\DD_j\right)\left(\frac{1}{24}-t\right)}} dt
+ O\!\lp e^{\pi\sqrt{\frac{n+\DD_j}2}} \rp .
\end{equation*}
\end{lem}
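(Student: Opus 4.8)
The plan is to start from the expansion \eqref{eq:alpha_j_asymptotic_expansion_up_to_power} and keep only the $k=1$ contribution restricted to $t$ near $0$, checking that everything discarded is $O\lp e^{\pi\sqrt{(n+\DD_j)/2}}\rp$. First I would isolate the terms with $k\geq2$. Since $\Psi_M$ is unitary and $|\nu_\eta(M_{h,k})|=1$, we have $|\psi_{h,k}(j,\ell)|\leq 1$ by \eqref{eq:multiplier_system_definition}. Using $I_{\frac{1}{2}}(z)=\sqrt{2/(\pi z)}\sinh(z)$ one rewrites
\[
\lp \tfrac{1}{24}-t \rp^{\frac{1}{4}} I_{\frac{1}{2}}\!\lp \tfrac{4\pi}{k}\sqrt{(n+\DD_j)\lp \tfrac{1}{24}-t \rp} \rp
=
\frac{\sqrt{k}\,\sinh\!\lp \tfrac{4\pi}{k}\sqrt{(n+\DD_j)\lp \tfrac{1}{24}-t \rp} \rp}{\pi\sqrt{2}\,(n+\DD_j)^{\frac{1}{4}}},
\]
which for $0\leq t\leq\frac{1}{24}$ and $k\geq2$ is $\ll \frac{\sqrt{k}}{(n+\DD_j)^{\frac{1}{4}}}\,e^{\frac{\pi}{\sqrt{6}}\sqrt{n+\DD_j}}$, since the exponent is $\frac{4\pi}{k}\sqrt{(n+\DD_j)/24}\leq\frac{\pi}{\sqrt{6}}\sqrt{n+\DD_j}$. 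Combined with the decomposition \eqref{phi_kernel_pole_removed_decomposition}, the bound \eqref{eq:PhiStar_bound} on $\Phi^*$, and a routine estimate for the principal value of the simple-pole piece (using $\PV\int_0^{\frac{1}{24}}\frac{dt}{t-\frac{1}{48}}=0$), the $k\geq2$ part of \eqref{eq:alpha_j_asymptotic_expansion_up_to_power} is bounded by a fixed power of $n$ times $e^{\frac{\pi}{\sqrt{6}}\sqrt{n+\DD_j}}$, hence is $o\lp e^{\pi\sqrt{(n+\DD_j)/2}}\rp$ because $\sqrt{2}<\sqrt{6}$.

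For the surviving $k=1$ term only $h=h'=0$ occurs, so $M_{0,1}=S$; since $\nu_\eta(S)=e^{-\frac{\pi i}{4}}$ we get $\psi_{0,1}(j,\ell)=\Psi_S(j,\ell)$ and the phase $e^{-\frac{2\pi i}{24k}(h'+24(n+\DD_j)h)}$ equals $1$. Inserting the displayed identity with $k=1$ and writing $\sinh(y)=\frac{1}{2}(e^y-e^{-y})$ — the $e^{-y}$ half has modulus $\leq\frac{1}{2}$ on $[0,\frac{1}{24}]$ and, after the prefactor $\frac{2}{(n+\DD_j)^{\frac{1}{4}}}$, contributes only $O(1/\sqrt{n})$ — one obtains
\[
\a_j(n)=\frac{1}{\pi\sqrt{2(n+\DD_j)}}\sum_{\ell=0}^{2}\Psi_S(j,\ell)\,\PV\!\int_0^{\frac{1}{24}}\Phi_{\ell,0}(t)\,e^{4\pi\sqrt{(n+\DD_j)\lp \frac{1}{24}-t \rp}}\,dt+O\!\lp e^{\pi\sqrt{(n+\DD_j)/2}}\rp .
\]
It then remains to truncate the $t$-integral from $[0,\frac{1}{24}]$ to $[0,\frac{1}{96}]$. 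For $t\geq\frac{1}{96}$ one has $\frac{1}{24}-t\leq\frac{1}{32}$, so $e^{4\pi\sqrt{(n+\DD_j)(\frac{1}{24}-t)}}\leq e^{\pi\sqrt{(n+\DD_j)/2}}$; for $\ell\in\{1,2\}$ the kernel $\Phi_{\ell,0}$ has no pole in $[\frac{1}{96},\frac{1}{24}]$ and the tail integral is $\ll e^{\pi\sqrt{(n+\DD_j)/2}}$, which the factor $\frac{1}{\sqrt{n+\DD_j}}$ only improves. For $\ell=0$ one splits $\Phi_{0,0}$ via \eqref{phi_kernel_pole_removed_decomposition}: the $\Phi^*_{0,0}$-part is handled as above, and the residual simple pole at $t=\frac{1}{48}$ (which is bounded away from $\frac{1}{24}$, so that $g(t):=e^{4\pi\sqrt{(n+\DD_j)(\frac{1}{24}-t)}}$ is smooth near $\frac{1}{48}$ with $|g'(t)|\ll\sqrt{n+\DD_j}\,e^{\pi\sqrt{(n+\DD_j)/2}}$ there) is treated by writing $\PV\int\frac{g(t)}{t-\frac{1}{48}}\,dt=\int\frac{g(t)-g\lp \frac{1}{48} \rp}{t-\frac{1}{48}}\,dt+g\!\lp \tfrac{1}{48} \rp\cdot(\text{a finite constant})$ and using $g\!\lp \tfrac{1}{48} \rp=e^{\pi\sqrt{(n+\DD_j)/3}}\leq e^{\pi\sqrt{(n+\DD_j)/2}}$; the tail is then $\ll\sqrt{n+\DD_j}\,e^{\pi\sqrt{(n+\DD_j)/2}}$, and the prefactor $\frac{1}{\sqrt{n+\DD_j}}$ brings it back to $O\lp e^{\pi\sqrt{(n+\DD_j)/2}}\rp$. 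On $[0,\frac{1}{96}]$ there is no pole, so $\Phi_{\ell,0}$ is continuous there and the principal value may be dropped, which yields the claimed identity.

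The hard part will be this last step: ensuring that truncating the $t$-integral, and in particular taming the principal value around $t=\frac{1}{48}$ in the discarded tail, costs only $O\lp e^{\pi\sqrt{(n+\DD_j)/2}}\rp$. One has to see that the $\sqrt{n}$-growth of $g'$ near $t=\frac{1}{48}$ is absorbed by the $\frac{1}{\sqrt{n}}$ prefactor and that the numerical inequalities $\frac{1}{\sqrt{6}},\frac{1}{\sqrt{3}}<\frac{1}{\sqrt{2}}$ provide exactly the needed exponential margin; everything else is routine bookkeeping with the estimates already established in Sections \ref{sec:bounds_nonprincipal_parts} and \ref{sec:circle_method}.
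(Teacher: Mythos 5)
Your proposal is correct and follows essentially the same route as the paper: both start from \eqref{eq:alpha_j_asymptotic_expansion_up_to_power}, rewrite $I_{\frac12}$ via $\sinh$, reduce to the $k=1$, $t\in[0,\frac1{96}]$ contribution, and control the discarded pieces using \eqref{eq:PhiStar_bound} together with a mean-value argument taming the principal value at $t=\frac1{48}$, with the same numerical margins $\frac1{\sqrt6},\frac1{\sqrt3}<\frac1{\sqrt2}$ doing the work. The only cosmetic difference is that the paper organizes the estimates into six integrals $J_1,\dots,J_6$ and removes the pole by symmetrizing under $t\mapsto\frac1{24}-t$, whereas you subtract $g(\frac1{48})$ and use $\PV\int_0^{1/24}\frac{dt}{t-1/48}=0$; both yield the same bounds.
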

\begin{proof}
We first recall that $I_{\frac{1}{2}} (x) = \sqrt{\frac{2}{\pi x}} \sinh (x)$ to rewrite \eqref{eq:alpha_j_asymptotic_expansion_up_to_power} as
\begin{multline}\label{eq:alpha_j_asymptotic_expansion_up_to_power2}
\a_j (n) = \frac{\sqrt{2}}{\pi\sqrt{n + \DD_j}} \sum_{\ell=0}^2
\sum_{k=1}^{\left\lfloor \sqrt{n} \right\rfloor} \frac{1}{\sqrt{k}}
\sum_{\substack{0 \leq h < k \\ \gcd (h,k) = 1}}
\psi_{h,k} (j,\ell)  e^{-\frac{2 \pi i}k \lp \frac{h'}{24} + \left(n+\DD_j\right) h \rp}
\\
\times \PV\int_0^{\frac{1}{24}} \Phi_{\ell, \frac{h'}{k}} (t)
  \sinh \!\lp \frac{4 \pi}{k} \sqrt{\left(n+\DD_j\right) \lp \frac{1}{24} - t \rp} \rp dt
+
O\!\lp n^{\frac{3}{4}}\rp .
\end{multline}
Since the only pole of $\Phi_{\ell, \frac{h'}{k}} (t)$ lying in $[0,\frac{1}{24}]$ is the one at $t=\frac{1}{48}$ with $\ell =0$ (see the comments after \eqref{eq:integral_kernel_definition}), we recall \eqref{phi_kernel_pole_removed_decomposition} and decompose the integral on the second line of \eqref{eq:alpha_j_asymptotic_expansion_up_to_power2} as
\begin{equation*}
\frac{1}{2} (J_1+J_2-J_3) + \frac{1}{2}   d_0 \!\lp \frac{1}{48} \rp
e^{\frac{2\pi i h'}{48k}} \d_{\ell,0}
(J_4 + J_5 - J_6),
\end{equation*}
where
\begin{align*}
J_1 &:= \int_0^{\frac{1}{96}} \Phi_{\ell,\frac{h'}{k}} (t)
  e^{\frac{4\pi}{k} \sqrt{\left(n+\DD_j\right)\left(\frac{1}{24}-t\right)}} dt,
&
J_2 &:= \int_{\frac{1}{96}}^{\frac{1}{24}} \Phi^*_{\ell,\frac{h'}{k}} (t)
e^{\frac{4\pi}{k} \sqrt{\left(n+\DD_j\right)\left(\frac{1}{24}-t\right)}} dt,
\\
J_3 &:= \int_0^{\frac{1}{24}} \Phi^*_{\ell,\frac{h'}{k}} (t)
e^{-\frac{4\pi}{k} \sqrt{\left(n+\DD_j\right)\left(\frac{1}{24}-t\right)}} dt,
&
J_4 &:=
\PV\int_{\frac{1}{96}}^{\frac1{32}} \frac{1}{t-\frac{1}{48}}
e^{\frac{4\pi}{k} \sqrt{\left(n+\DD_j\right)\left(\frac{1}{24}-t\right)}} dt,
\\
J_5 &:=
\int_{\frac1{32}}^{\frac{1}{24}} \frac{1}{t-\frac{1}{48}}
e^{\frac{4\pi}{k} \sqrt{\left(n+\DD_j\right)\left(\frac{1}{24}-t\right)}} dt,
&
J_6 &:=
\PV\int_{0}^{\frac{1}{24}} \frac{1}{t-\frac{1}{48}}
  e^{-\frac{4\pi}{k} \sqrt{\left(n+\DD_j\right)\left(\frac{1}{24}-t\right)}} dt .
\end{align*}
Next we bound $J_1, \ldots, J_6$ in terms of $n,k,h'$.
By \eqref{eq:PhiStar_bound}, uniformly in $h'$ and $t \in [0,\frac{1}{96}]$, we have $\Phi_{\ell, \frac{h'}{k}} (t) \ll k$ and hence
\begin{equation*}
J_1 \ll k   e^{\frac\pi k\sqrt{\frac23\left(n+\DD_j\right)}}.
\end{equation*}
Using $\Phi^*_{\ell, \frac{h'}{k}} (t) \ll k$ we also bound
\begin{equation*}
J_2 \ll k   e^{\frac\pi k \sqrt{\frac{n+\DD_j}2}}
\mbox{ and }
J_3 \ll k .
\end{equation*}
For $J_4$, we first let $t \mapsto \frac{1}{24}-t$ in one half of the integral to remove the pole at $t=\frac{1}{48}$ and write
\begin{equation*}
J_4 = \frac{1}{2} \int_{\frac{1}{96}}^{\frac1{32}}
\frac{e^{\frac{4\pi}{k} \sqrt{\left(n+\DD_j\right)\left(\frac{1}{24}-t\right)}} - e^{\frac{4\pi}{k} \sqrt{\left(n+\DD_j\right)t}} }{t-\frac{1}{48}}   dt .
\end{equation*}
By the mean value theorem we have
\begin{equation*}
\left| \frac{e^{\frac{4\pi}{k} \sqrt{\left(n+\DD_j\right)\left(\frac{1}{24}-t\right)}} - e^{\frac{4\pi}{k} \sqrt{\left(n+\DD_j\right)t}} }{2\lp t-\frac{1}{48} \rp} \right|
=
\frac{2\pi}{k} \sqrt{\frac{n+\DD_j}{\xi}}   e^{\frac{4\pi}{k} \sqrt{\left(n+\DD_j\right)\xi}}
\end{equation*}
for some $\xi$ between $t$ and $\frac{1}{24}-t$. Since $t \in [\frac{1}{96}, \frac1{32}]$, it follows that
\begin{equation*}
J_4 \ll \frac{\sqrt{n+\DD_j}}{k}   e^{\frac\pi k \sqrt{\frac{n+\DD_j}2}}.
\end{equation*}
For $J_6$ we employ the same argument as $J_4$ and use the mean value theorem while noting that $\xi^{-\frac{1}{2}} \leq t^{-\frac{1}{2}} + (\frac{1}{24}-t)^{-\frac{1}{2}}$ if $\xi$ is between $t$ and $\frac{1}{24}-t$ and
$t \in (0, \frac{1}{24})$. This then yields
\begin{equation*}
J_6 \ll \frac{\sqrt{n+\DD_j}}{k}.
\end{equation*}
Finally, we bound $J_5 \ll e^{\frac\pi k \sqrt{\frac{n+\DD_j}6}}$.
With these bounds at hand, we find that $J_2,\ldots,J_6$ for $k \geq 1$ as well $J_1$ for $k \geq 2$
together contribute to  $\a_j (n)$ in \eqref{eq:alpha_j_asymptotic_expansion_up_to_power2} as
$\ll e^{\pi\sqrt{\frac{n+\DD_j}2}}$. The lemma statement then follows from the contribution $J_1$ for $k=1$ while noting that $\psi_{0,1} (j,\ell) = \Psi_S (j,\ell)$.
\end{proof}

\noindent We are now ready to explore the main exponential term in the asymptotic expansion
\eqref{eq:alpha_j_asymptotic_expansion_up_to_power}.

\begin{prop}\label{prop:alpha_n_main_exonential_term}
For $n,N\in\mathbb{N}$ and $j \in \{0,1,2\}$ we have
\begin{equation*}
\a_j (n) = \frac{e^{\pi\sqrt{\frac23\left(n+\DD_j\right)}}}{n+\DD_j}
\lp \sum_{r=0}^{N-1} \frac{a_r}{\left(n+\DD_j\right)^\frac r2}
+ O_N\!\lp n^{-\frac{N}{2}}\rp \rp ,
\end{equation*}
where
\begin{equation*}
a_r :=  \frac{4 \sqrt{3}}{\left(4 \pi \sqrt{6}\right)^{r+2}}
\sum_{\ell =0}^2 \Psi_S (j, \ell) \left[\frac{d^r}{d u^r} \!  \Big( (1-12u) \Phi_{\ell,0} (u(1-6u)) \Big)
\right]_{u=0}.
\end{equation*}
\end{prop}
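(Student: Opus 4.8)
The plan is to derive the result from Lemma~\ref{lem:leading_exponential_k1_t0} by a Laplace-type (Watson's lemma) analysis of the remaining integral. Since $\tfrac12<\tfrac23$, the error term $O(e^{\pi\sqrt{(n+\DD_j)/2}})$ in Lemma~\ref{lem:leading_exponential_k1_t0} satisfies $O(e^{\pi\sqrt{(n+\DD_j)/2}})=\frac{e^{\pi\sqrt{2(n+\DD_j)/3}}}{n+\DD_j}O_N(n^{-N/2})$ for every $N$, so it may be absorbed into the asserted error; thus it suffices to expand, for each $\ell$, the integral $\int_0^{1/96}\Phi_{\ell,0}(t)\,e^{4\pi\sqrt{(n+\DD_j)(\frac1{24}-t)}}\,dt$ as $n\to\infty$.

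First I would make the change of variables $t=u(1-6u)=u-6u^2$, which is exactly the substitution appearing in the definition of $a_r$. One verifies the two algebraic identities $dt=(1-12u)\,du$ and $\frac1{24}-u(1-6u)=6\bigl(u-\tfrac1{12}\bigr)^2$; since $t\mapsto u$ maps $[0,\tfrac1{96}]$ increasingly and bijectively onto $[0,u_1]$ with $u_1:=\tfrac1{12}\bigl(1-\tfrac{\sqrt3}2\bigr)<\tfrac1{12}$, on this range the principal branch gives $\sqrt{\tfrac1{24}-t}=\sqrt6\bigl(\tfrac1{12}-u\bigr)$, whence $4\pi\sqrt{(n+\DD_j)(\tfrac1{24}-t)}=\pi\sqrt{\tfrac23(n+\DD_j)}-\lambda u$ with $\lambda:=4\pi\sqrt{6(n+\DD_j)}$. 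The integral therefore equals $e^{\pi\sqrt{\frac23(n+\DD_j)}}\int_0^{u_1}g_\ell(u)\,e^{-\lambda u}\,du$, where $g_\ell(u):=(1-12u)\,\Phi_{\ell,0}(u(1-6u))$.

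Next I would invoke Watson's lemma. The essential point — and the step I expect to require the most care — is that $g_\ell$ is holomorphic on a complex neighborhood of $[0,u_1]$: by the comments after~\eqref{eq:integral_kernel_definition}, the only pole of any $\Phi_{\ell,0}$ in $[0,\tfrac1{24}]$ is the one at $t=\tfrac1{48}$ for $\ell=0$, whose preimage under $t=u-6u^2$ is $u=\tfrac1{12}\bigl(1-\tfrac1{\sqrt2}\bigr)$, which exceeds $u_1$. Consequently, Taylor expansion of $g_\ell$ at $u=0$, the elementary moment identity $\int_0^\infty u^re^{-\lambda u}\,du=r!\,\lambda^{-r-1}$, extension of each truncated moment from $[0,u_1]$ to $[0,\infty)$ at the cost of an exponentially small error, and a crude bound on the Taylor remainder yield, uniformly in $\ell$,
\[
\int_0^{u_1}g_\ell(u)\,e^{-\lambda u}\,du=\sum_{r=0}^{N-1}\frac{g_\ell^{(r)}(0)}{\lambda^{r+1}}+O_N\!\bigl(\lambda^{-N-1}\bigr),\qquad g_\ell^{(r)}(0)=\Bigl[\tfrac{d^r}{du^r}\bigl((1-12u)\Phi_{\ell,0}(u(1-6u))\bigr)\Bigr]_{u=0}.
\]

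Finally I would substitute this back into Lemma~\ref{lem:leading_exponential_k1_t0}, collect powers of $n+\DD_j$ (using $\lambda^{-r-1}\asymp(n+\DD_j)^{-(r+1)/2}$ against the prefactor $(n+\DD_j)^{-1/2}$, so that the $r$-th term scales like $(n+\DD_j)^{-1}(n+\DD_j)^{-r/2}$ as desired), and simplify the numerical constant via $\sqrt6/\sqrt2=\sqrt3$ to obtain
\[
a_r=\frac{1}{\pi\sqrt2\,(4\pi\sqrt6)^{r+1}}\sum_{\ell=0}^2\Psi_S(j,\ell)\,g_\ell^{(r)}(0)=\frac{4\sqrt3}{(4\pi\sqrt6)^{r+2}}\sum_{\ell=0}^2\Psi_S(j,\ell)\,g_\ell^{(r)}(0),
\]
with the Watson's-lemma remainder and the Lemma~\ref{lem:leading_exponential_k1_t0} error combining into the claimed $O_N\bigl(n^{-N/2}\bigr)$. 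Apart from the holomorphy check and the two square-completion/branch identities in the change of variables, everything else is a routine Laplace-method computation.
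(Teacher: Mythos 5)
Your proposal is correct and follows essentially the same route as the paper: starting from Lemma \ref{lem:leading_exponential_k1_t0}, absorbing its error term, substituting $t=u(1-6u)$ to linearize the exponent, Taylor-expanding $(1-12u)\Phi_{\ell,0}(u(1-6u))$ at $u=0$, extending the moment integrals to $[0,\infty)$ up to exponentially small errors, and matching constants via $\sqrt{6}/\sqrt{2}=\sqrt{3}$. The only cosmetic difference is that you invoke holomorphy and Watson's lemma where the paper simply uses smoothness of $\Phi_{\ell,0}$ on $[0,\tfrac1{96}]$ together with Taylor's theorem with Lagrange remainder; both are valid.
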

\begin{proof}
By Lemma \ref{lem:leading_exponential_k1_t0} we have
\begin{equation*}
\a_j (n) = \frac{e^{\pi\sqrt{\frac23\left(n+\DD_j\right)}}}{\pi\sqrt{2\left(n+\DD_j\right)}}
\sum_{\ell=0}^2 \Psi_S (j,\ell)
\int_0^{\frac{1}{96}} \Phi_{\ell,0} (t)   e^{-\pi\sqrt{\frac23\left(n+\DD_j\right)} \lp 1 - \sqrt{1-24t} \rp} dt
+ O\!\lp e^{\pi\sqrt{\frac12\left(n+\DD_j\right)}} \rp .
\end{equation*}
We rewrite the integral here by making the change of variables $u:= \frac{1-\sqrt{1-24t}}{12}$ and get
\begin{equation*}
\int_0^{\frac{1}{96}} \Phi_{\ell,0} (t)   e^{-\pi\sqrt{\frac23\left(n+\DD_j\right)} \lp 1 - \sqrt{1-24t} \rp} dt
=
\int_0^{\frac{2 - \sqrt{3}}{24}} (1-12u)   \Phi_{\ell,0} (u(1-6u))
e^{-4 \pi \sqrt{6 \left(n+\DD_j\right)}   u} du.
\end{equation*}
Defining $f_\ell (u) := (1-12u)\Phi_{\ell,0} (u(1-6u))$, we next look at its Taylor
expansion for $u \in [0,\frac{2 - \sqrt{3}}{24}]$ (where $f_\ell$ is smooth since $\Phi_{\ell,0}$ is smooth on the integration range) as
\begin{equation*}
f_\ell (u) = \sum_{r=0}^{N-1} \frac{f_\ell^{(r)} (0)}{r!}    u^r +
\frac{f_\ell^{(N)} (\xi)}{N!}    u^N
\quad \mbox{for some } 0 \leq \xi \leq u .
\end{equation*}
Since $f_\ell^{(N)} (u)$ is continuous in $[0,\frac{2 - \sqrt{3}}{24}]$, we find $C_N > 0$ such that $|f_\ell^{(N)} (u)| \leq C_N$ there. Thus
\begin{multline*}
\left| \int_0^{\frac{2 - \sqrt{3}}{24}} f_\ell (u)    e^{-4 \pi \sqrt{6 \left(n+\DD_j\right)}   u} du
-
\sum_{r=0}^{N-1} \frac{f_\ell^{(r)} (0)}{r!}
\int_0^{\frac{2 - \sqrt{3}}{24}} u^r    e^{-4 \pi \sqrt{6 \left(n+\DD_j\right)}   u} du
\right|
\\
\leq \frac{C_N}{N!}
\int_0^{\frac{2 - \sqrt{3}}{24}} u^N    e^{-4 \pi \sqrt{6 \left(n+\DD_j\right)}   u} du
\ll_N n^{-\frac{N+1}{2}} .
\end{multline*}
Finally, we conclude the proof by noting
\begin{equation*}
\frac{1}{r!} \int_0^{\frac{2 - \sqrt{3}}{24}} u^r    e^{-4 \pi \sqrt{6 \left(n+\DD_j\right)}   u} du
=
\lp 4 \pi \sqrt{6 \left(n+\DD_j\right)} \rp^{-(r+1)}
+ O_N\! \lp e^{-\pi\sqrt{\frac16\left(n+\DD_j\right)}\lp 2-\sqrt3\rp}\rp.\qedhere
\end{equation*}
\end{proof}

Proposition \ref{prop:alpha_n_main_exonential_term} shows that the
Taylor coefficients of $\Phi_{\ell,0} (t)$ at $t=0$ determine the main exponential term in the asymptotic expansion of $\a_j (n)$. For convenience, we express the first few of the expansion coefficients $a_r$ defined there in terms of those Taylor coefficients:
\begin{align*}
a_0 &= \frac{1}{8 \sqrt{3}   \pi^2 } \sum_{\ell =0}^2 \Psi_S (j, \ell)    \Phi_{\ell,0} (0), \quad a_1 = \frac{1}{96 \sqrt{2}  \pi^3 } \sum_{\ell =0}^2 \Psi_S (j, \ell) \lp \Phi^{(1)}_{\ell,0} (0) - 12 \Phi_{\ell,0} (0) \rp, \\
a_2 &= \frac{1}{768 \sqrt{3}  \pi^4 } \sum_{\ell =0}^2 \Psi_S (j, \ell) \lp \Phi^{(2)}_{\ell,0} (0) - 36 \Phi^{(1)}_{\ell,0} (0) \rp, \\
a_3 &= \frac{1}{9216 \sqrt{2}   \pi^5} \sum_{\ell =0}^2 \Psi_S (j, \ell) \lp \Phi^{(3)}_{\ell,0} (0) - 72 \Phi^{(2)}_{\ell,0} (0) +432 \Phi^{(1)}_{\ell,0} (0) \rp .
\end{align*}
These Taylor coefficients are computed in Appendix \ref{app:taylor_integral_kernel} and using the values of $\sum_{\ell=0}^2 \Psi_{S} (j,\ell)   \Phi^{(r)}_{\ell, 0} (0)$ reported in Table \ref{tab:kernel_taylor_coefficients} we find the first few terms reported in Corollary \ref{cor:leading_exponential_first_few_example}, which concludes its proof.

\section{Conclusion}\label{sec:conclusion}
In this paper, we find asymptotic expressions for certain integer partitions whose generating function feature a false-indefinite theta function multiplied with a weakly holomorphic modular form. Our asymptotic expression includes all the exponentially growing components of the asymptotic behavior. This is as much as one can hope for the overall weight $\frac{1}{2}$. Our methods however are quite general and if the overall weight is smaller than $-1$, then the expressions and bounds we prove immediately lead to Hardy--Ramanujan--Rademacher type exact formulas. These bounds are definitely not optimal, and improving them would also allow one to find exact formulas for larger weights. It would be interesting to develop these arguments and search for applications where such exact formulas are viable. Another direction would be to investigate similar asymptotic results if the false-indefinite theta function in question is not related to Maass forms but instead related to genuine mock Maass forms. Finally, it would be compelling to find a combinatorial interpretation for the component $j=2$. More generally it would be very interesting if one could relate the $\mathrm{SL}_2\lp \Z \rp$-orbits observed on the modular side to a combinatorial relation.

\appendix \section{Estimates on the Fourier Coefficients of False-indefinite Theta Functions}
\label{app:false_indef_estimates}
As seen in Section \ref{sec:parity_partition_false_indefinite}, evaluating the obstruction to modularity for false-indefinite theta functions involves detailed estimates due to the presence of conditional convergence. In this appendix, we provide bounds involving the Fourier coefficients of false-indefinite theta functions. Instead of aiming for optimal bounds, our goal is to provide easily applicable geometry of numbers type arguments. More concretely, we consider the Fourier coefficients $d_j (n)$ of the Maass forms $U_j$ obtained by combining the  mock Maass theta functions $F_{\bm{\mu}}$ as in \eqref{eq:Uj_definition}.\footnote{See equation \eqref{eq:Uj_Fourier_decomposition} for the general form of the Fourier expansion. Also recall that $d_j (n)$ for $n>0$ are the Fourier coefficients of the corresponding $q$-series $u_j$ given in equation \eqref{eq:u0_u1_false_indefinite_description}.
}
Here recall the assumption that the only solution to $Q(\bm{n}) = 0$ with $\bm{n} \in \mathbb{Z} + \bm{\mu}$ and $\bm{\mu} \in A^{-1} \IZ^2$ comes from $\bm{n} = \bm{0}$ with $\bm{\mu} \in \IZ^2$, which does hold for the particular case considered in Section \ref{sec:parity_partition_false_indefinite}. Since $\IZ^2\cap \mathcal{S}_j^\pm = \emptyset$ for $j \in \{0,1,2\}$, the constant term does not appear in the Fourier expansion of $U_j$
given in \eqref{eq:Uj_general_form} and by \eqref{eq:Uj_definition}
\begin{equation*}
d_j (n) =
\frac{1}{2} \lp \textstyle\sum_{\bm{\mu} \in \mathcal{S}^+_j}   a_{\bm{\mu}} (n)
-  \textstyle\sum_{\bm{\mu} \in \mathcal{S}^-_j} a_{\bm{\mu}} (n) \rp,
\end{equation*}
where for $\bm{\mu} \in \mathcal{S}^\pm_j$ we have
(recall $\b_0 = \frac{1}{48}$, $\b_1 = \frac{25}{48}$,
$\b_2 = \frac{23}{24}$ and
the Fourier expansion \eqref{eq:Fmu_definition} of $F_{\bm{\mu}}$)
\begin{align*}
\sum_{\substack{n \in \IZ+\b_j \\ n > 0}} a_{\bm{\mu}} (n)  q^n &:= \frac{1}{2} \sum_{\bm{n} \in \IZ^2 +\bm{\mu}} \Big( 1 + \sgn (2n_1+n_2)  \sgn (2n_1-n_2) \Big) q^{12n_1^2-2n_2^2},\\
\sum_{\substack{n \in \IZ+\b_j \\ n < 0}} a_{\bm{\mu}} (n)  q^{|n|} &:= \frac{1}{2} \sum_{\bm{n} \in \IZ^2 +\bm{\mu}} \Big( 1 - \sgn (3n_1+n_2) \sgn (3n_1-n_2) \Big) q^{2n_2^2-12n_1^2}.
\end{align*}

Since we require the behavior of $d_j$ in different residue classes modulo $c \in \mathbb{N}$, for $\bm{\mu} \in \mathcal{S}^\pm_j$ with $j \in \{0,1,2\}$ and
$r_1,r_2 \in \{0,1,\ldots,c-1\}$ we define
\begin{align}
\sum_{\substack{n \in \IZ+\b_j \\ n > 0}} a_{c,\bm{\mu},\bm{r}} (n) q^n &:= \frac{1}{2} \sum_{\bm{n} \in \IZ^2 +\frac{\bm{\mu}+\bm{r}}{c}} \Big( 1 + \sgn (2n_1+n_2)  \sgn (2n_1-n_2) \Big) q^{c^2\left(12n_1^2-2n_2^2\right)}, \notag \\
\sum_{\substack{n \in \IZ+\b_j \\ n < 0}} a_{c,\bm{\mu},\bm{r}} (n) q^{|n|} &:= \frac{1}{2} \sum_{\bm{n} \in \IZ^2 +\frac{\bm{\mu}+\bm{r}}{c}} \Big( 1 - \sgn (3n_1+n_2) \sgn (3n_1-n_2) \Big) q^{c^2\left(2n_2^2-12n_1^2\right)},
\label{eq:a_cmur_definition}
\end{align}
so that $a_{\bm{\mu}} (n)  = \sum_{0\leq r_1,r_2 \leq c-1} a_{c,\bm{\mu},\bm{r}} (n)$
for all $\bm{\mu} \in \mathcal{S}^\pm_j$ and $n \in \IZ+\b_j$
and $a_{c,\bm{\mu},\bm{r}} (n)  =  0$ unless $n \equiv Q(\bm{\mu}+\bm{r}) \pmod{c} .$
We begin our work by bounding partial sums of $a_{c,\bm{\mu},\bm{r}}$.

\begin{lem}\label{lem:a_mu_partial_sum_bound}
Let $c \in \IN$, $r_1,r_2 \in \{0,1,\ldots,c-1\}$, and $X\in\R^+$. Then for $\bm{\mu} \in \mathcal{S}^\pm_j$ with $j \in \{0,1,2\}$
\begin{equation*}
\sum_{\substack{n \in \IZ+Q(\bm{\mu}) \\ 0<n\leq X}} a_{c,\bm{\mu},\bm{r}} (n)
=
\CA \frac{X}{c^2} + O\!\lp \max\! \left\{1, \frac{\sqrt{X}}{c} \right\} \rp,
\sum_{\substack{n \in \IZ+Q(\bm{\mu}) \\ -X \leq n < 0}} a_{c,\bm{\mu},\bm{r}} (n)
=
\CA \frac{X}{c^2} + O\!\lp \max\! \left\{1, \frac{\sqrt{X}}{c}\right\} \rp,
\end{equation*}
where $\CA := \frac{\log(\sqrt{2}+\sqrt{3})}{\sqrt{6}} = 0.46794065\ldots$ and with the implied constants independent of $\bm{r}$ and $\bm{\mu}$.
\end{lem}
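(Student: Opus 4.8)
The plan is to recognise each partial sum as a weighted lattice-point count in an explicit bounded planar region and then to apply a standard geometry-of-numbers estimate, the only genuine computation being an area. Write $Q(\bm x) := 12x_1^2 - 2x_2^2$ and set $Y := X/c^2$. Unwinding \eqref{eq:a_cmur_definition}, for $n > 0$ we have $a_{c,\bm\mu,\bm r}(n) = \sum_{\bm n} w(\bm n)$ over those $\bm n \in \IZ^2 + \tfrac{\bm\mu+\bm r}{c}$ with $c^2 Q(\bm n) = n$, where $w(\bm n) := \tfrac12\bigl(1 + \sgn(2n_1+n_2)\sgn(2n_1-n_2)\bigr) \in \{0,\tfrac12,1\}$; the value $1$ is taken precisely in the interior of the cone $|x_2| < 2|x_1|$, and the values $0,\tfrac12$ only on the lines $x_2 = \pm 2 x_1$. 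Hence
\[
\sum_{\substack{n \in \IZ + Q(\bm\mu) \\ 0 < n \le X}} a_{c,\bm\mu,\bm r}(n) = \sum_{\substack{\bm n \in \IZ^2 + \frac{\bm\mu+\bm r}{c} \\ \bm n \in R_Y}} w(\bm n), \qquad R_Y := \bigl\{\bm x \in \IR^2 : 0 < Q(\bm x) \le Y,\ |x_2| < 2|x_1|\bigr\}.
\]
The elementary implication $|x_2| < 2|x_1| \Rightarrow Q(\bm x) > 4x_1^2 > 0$ shows that the condition $Q(\bm x) > 0$ is automatic on $R_Y$ and also that $R_Y$ is bounded, being contained in $\{|x_1| < \tfrac{\sqrt Y}{2},\ |x_2| < \sqrt Y\}$; moreover $R_Y = \sqrt Y\, R_1$, and $\partial R_Y$ consists of four line segments together with two arcs of the hyperbola $Q = Y$, of total length $\ll \sqrt Y$.

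Next I would invoke the following uniform estimate (proved, if desired, by covering an $O(1)$-neighbourhood of $\partial S$ by unit squares): for a bounded set $S \subset \IR^2$ with rectifiable boundary and any $\bm v \in \IR^2$,
\[
\bigl| \#\bigl((\IZ^2 + \bm v) \cap S\bigr) - \operatorname{area}(S)\bigr| \ll \operatorname{length}(\partial S) + 1,
\]
with absolute implied constant. Applying this with $S = R_Y$ and noting that the number of points of $\IZ^2 + \bm v$ lying on either line $x_2 = \pm 2x_1$ inside $R_Y$ is likewise $\ll \sqrt Y + 1$ (so that the weights $\tfrac12$ versus $1$, and the difference between the open and the closed cone, are absorbed into the error), we obtain
\[
\sum_{\substack{n \in \IZ + Q(\bm\mu) \\ 0 < n \le X}} a_{c,\bm\mu,\bm r}(n) = \operatorname{area}(R_1)\,Y + O\bigl(\sqrt Y + 1\bigr).
\]
Every constant here depends only on the fixed region $R_1$, hence is independent of $\bm r$ and of $\bm\mu$ (which runs over a finite set); since $Y = X/c^2$ and $\sqrt Y + 1 \asymp \max\{1, \sqrt X/c\}$, this is exactly the claimed shape, the ``$+1$'' supplying the correct bound when $R_Y$ is too small to contain the main term.

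It remains to evaluate $\operatorname{area}(R_1)$. Diagonalising $Q$ via $(u,v) := (\sqrt{12}\,x_1, \sqrt2\,x_2)$, with constant Jacobian $\tfrac1{2\sqrt6}$, carries $R_1$ to $\{0 < u^2 - v^2 \le 1,\ |v| < \sqrt{2/3}\,|u|\}$; hyperbolic coordinates $u = \rho\cosh\theta$, $v = \rho\sinh\theta$ (Jacobian $\rho$) then give, summing over the two components $u > 0$ and $u < 0$,
\[
\operatorname{area}(R_1) = \frac1{2\sqrt6} \cdot 2 \int_{-\theta_0}^{\theta_0} \int_0^1 \rho \, d\rho \, d\theta = \frac{\theta_0}{\sqrt6}, \qquad \theta_0 := \operatorname{arctanh}\sqrt{2/3} = \tfrac12 \log\frac{\sqrt3+\sqrt2}{\sqrt3-\sqrt2} = \log(\sqrt2+\sqrt3),
\]
so $\operatorname{area}(R_1) = \tfrac{\log(\sqrt2+\sqrt3)}{\sqrt6} = \CA$. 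The second formula of the lemma is proved in the same way: there $|n| = -c^2 Q(\bm n)$, the relevant weight $\tfrac12\bigl(1 - \sgn(3n_1+n_2)\sgn(3n_1-n_2)\bigr)$ is supported on the cone $|x_2| > 3|x_1|$, on which $Q(\bm x) < -6x_1^2 \le 0$ automatically, so the region is the bounded set $\{-Y \le Q(\bm x) < 0,\ |x_2| > 3|x_1|\}$ with boundary length $\ll \sqrt Y$, and the same diagonalisation — now with hyperbolic coordinates centred on the $v$-axis and the identical angular cut-off $|\phi| < \operatorname{arctanh}\sqrt{2/3} = \theta_0$ — yields the very same constant $\CA$. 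I expect the main difficulty to be purely in the bookkeeping: ensuring the implied constants are genuinely uniform in $\bm r$ and in the scale $\sqrt X/c$, and checking that the boundary-line and weighting subtleties are indeed $O(\sqrt Y + 1)$; the area computation above is the one substantive step.
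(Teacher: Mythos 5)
Your proposal is correct and follows essentially the same route as the paper: both recognize the partial sums as (half-integer weighted) lattice-point counts of $\IZ^2+\tfrac{\bm\mu+\bm r}{c}$ in the bounded hyperbolic regions cut out by $0<\pm Q\le X/c^2$ and the relevant cones, absorb the boundary lines carrying weight $\tfrac12$ into the $O(\sqrt{X}/c+1)$ error, and apply the standard area-plus-boundary-length lattice count (the paper cites Landau, splitting the cone into its two components $R_1^\pm,R_2^\pm$). The only addition on your side is the explicit hyperbolic-coordinate evaluation of the area, which the paper merely asserts equals $\CA$; your computation confirms it.
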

\begin{proof}
We start our analysis with
\begin{equation*}
D^-_{c, \bm{\mu}, \bm{r}} (X) :=
\sum_{\substack{n \in \IZ+Q(\bm{\mu}) \\ -X \leq n <0}} a_{c,\bm{\mu},\bm{r}} (n) .
\end{equation*}
By equation \eqref{eq:a_cmur_definition}, $D^-_{c, \bm{\mu}, \bm{r}} (X)$ counts the number of points in $\IZ^2 + \frac{\bm{\mu}+\bm{r}}{c}$ within the regions
$R_1^-(\frac{\sqrt{X}}{c})$ and $R_2^-(\frac{\sqrt{X}}{c})$, where we define (see Figure \ref{fig:bound_regions2})
\begin{equation*}
R_1^- (R) := \left\{ \bm{x} \in \mathbb{R}^2 :
-\frac{x_2}{3} \leq x_1 \leq \frac{x_2}{3} \mbox{ and }
2x_2^2-12x_1^2 \leq R^2 \right\},
\end{equation*}
\begin{equation*}
R_2^- (R) := \left\{ \bm{x} \in \mathbb{R}^2 :
\frac{x_2}{3} \leq x_1 \leq -\frac{x_2}{3} \mbox{ and }
2x_2^2-12x_1^2 \leq R^2 \right\}
\end{equation*}
and with the points on the lines $x_2 = \pm 3 x_1$ counted with multiplicity $\frac{1}{2}$.

\begin{figure}[h!]
\vspace{-5pt}
\centering
\includegraphics[scale=0.19]{./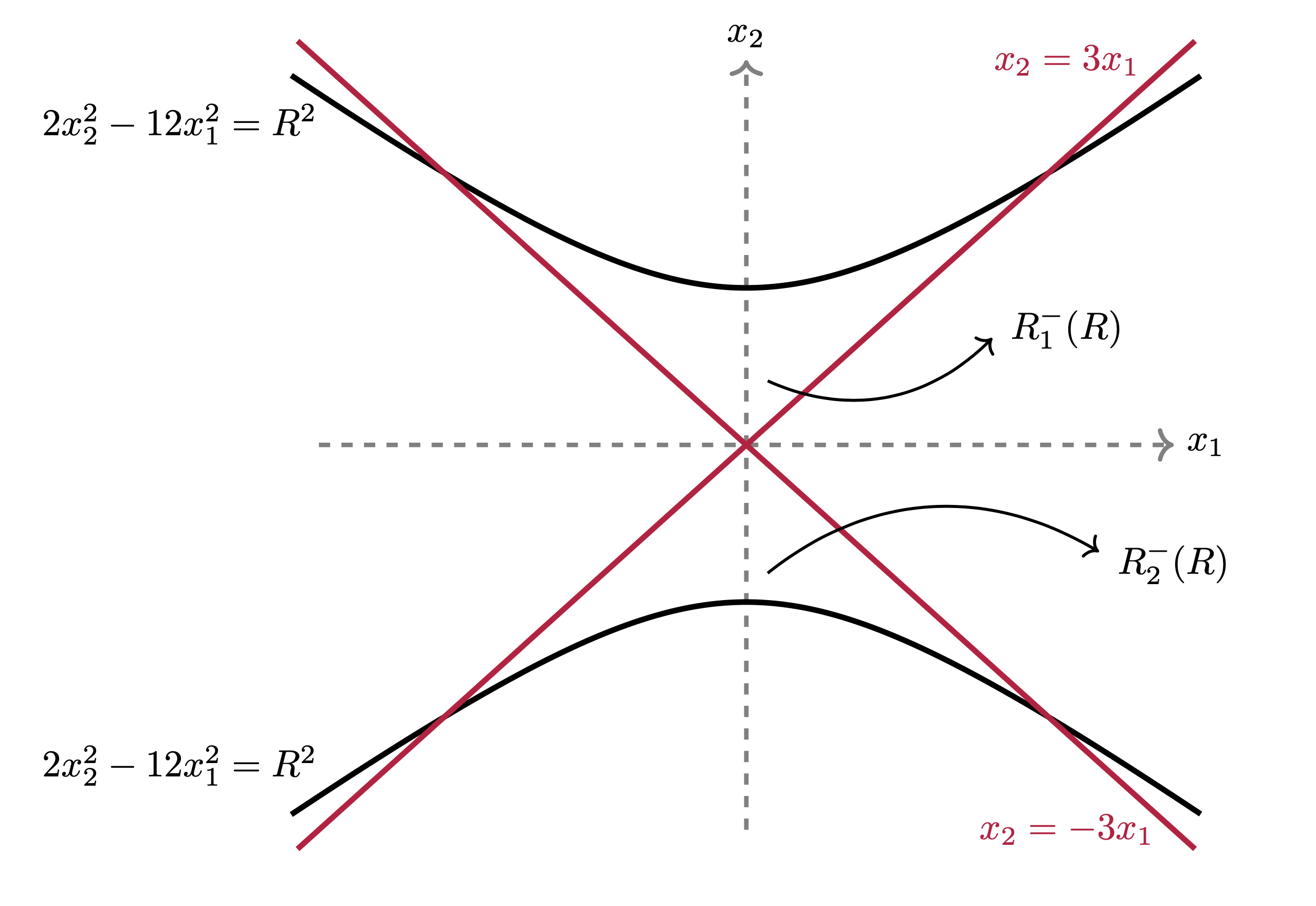}
\vspace{-5pt}
\caption{}
\label{fig:bound_regions2}
\vspace{-5pt}
\end{figure}

We first note that the contribution of the boundary points (and accordingly the change of multiplicity for such points) is negligible, since the number of such points is bounded as
\begin{multline*}
\left| \left\{ \bm{n} \in \IZ^2 + \frac{\bm{\mu}+\bm{r}}{c} :
n_2 = \pm 3 n_1 \mbox{ and } - \frac{\sqrt{X}}{\sqrt{6}c} \leq n_1 \leq  \frac{\sqrt{X}}{\sqrt{6}c} \right\} \right|
\\
\leq 2
\left| \left\{ n_1 \in \IZ + \frac{\mu_1+r_1}{c} :
- \frac{\sqrt{X}}{\sqrt{6}c} \leq n_1 \leq  \frac{\sqrt{X}}{\sqrt{6}c} \right\} \right|
\leq 2 \lp \frac{2 \sqrt{X}}{\sqrt{6}c} + 1 \rp \ll
\max\left\{ 1, \frac{\sqrt{X}}{c} \right\} .
\end{multline*}
Therefore, we have
\begin{equation*}
D^-_{c, \bm{\mu}, \bm{r}} (X)
=
D^{-}_{1,c, \bm{\mu}, \bm{r}} (X)
+
D^{-}_{2,c, \bm{\mu}, \bm{r}} (X)
+
O\!\lp \max\left\{ 1, \frac{\sqrt{X}}{c} \right\} \rp ,
\end{equation*}
where
\begin{equation*}
D^{-}_{j,c, \bm{\mu}, \bm{r}} (X)
:=
\left| \left\{ \bm{n} \in \IZ^2 + \frac{\bm{\mu}+\bm{r}}{c} :
\bm{n} \in R_j^- \!\lp \frac{\sqrt{X}}{c} \rp \right\} \right|
\quad \mbox{for } j \in \{1,2\}.
\end{equation*}
Now we can geometrically bound the number of points in $R_j^- \lsp \frac{\sqrt{X}}{c} \rsp$ as (see e.g.~the discussion of Landau on page 186 and volume 2 of \cite{Lan})
\begin{equation*}
\left| D^{-}_{j,c, \bm{\mu}, \bm{r}} (X) -
\mathrm{area} \!\lp  R_j^- \!\lp \frac{\sqrt{X}}{c}  \rp \rp \right|
\leq 4 \mathrm{length} \lp \del R_j^- \!\lp \frac{\sqrt{X}}{c} \rp \rp +4,
\end{equation*}
where $\del$ denotes the boundary of the given region. The lemma statement for this case then follows from
$\mathrm{area} \!\lp  R_1^- (1) \rp + \mathrm{area} \!\lp  R_2^- (1) \rp = \CA$.

\begin{figure}[h!]
\vspace{-10pt}
\centering
\includegraphics[scale=0.20]{./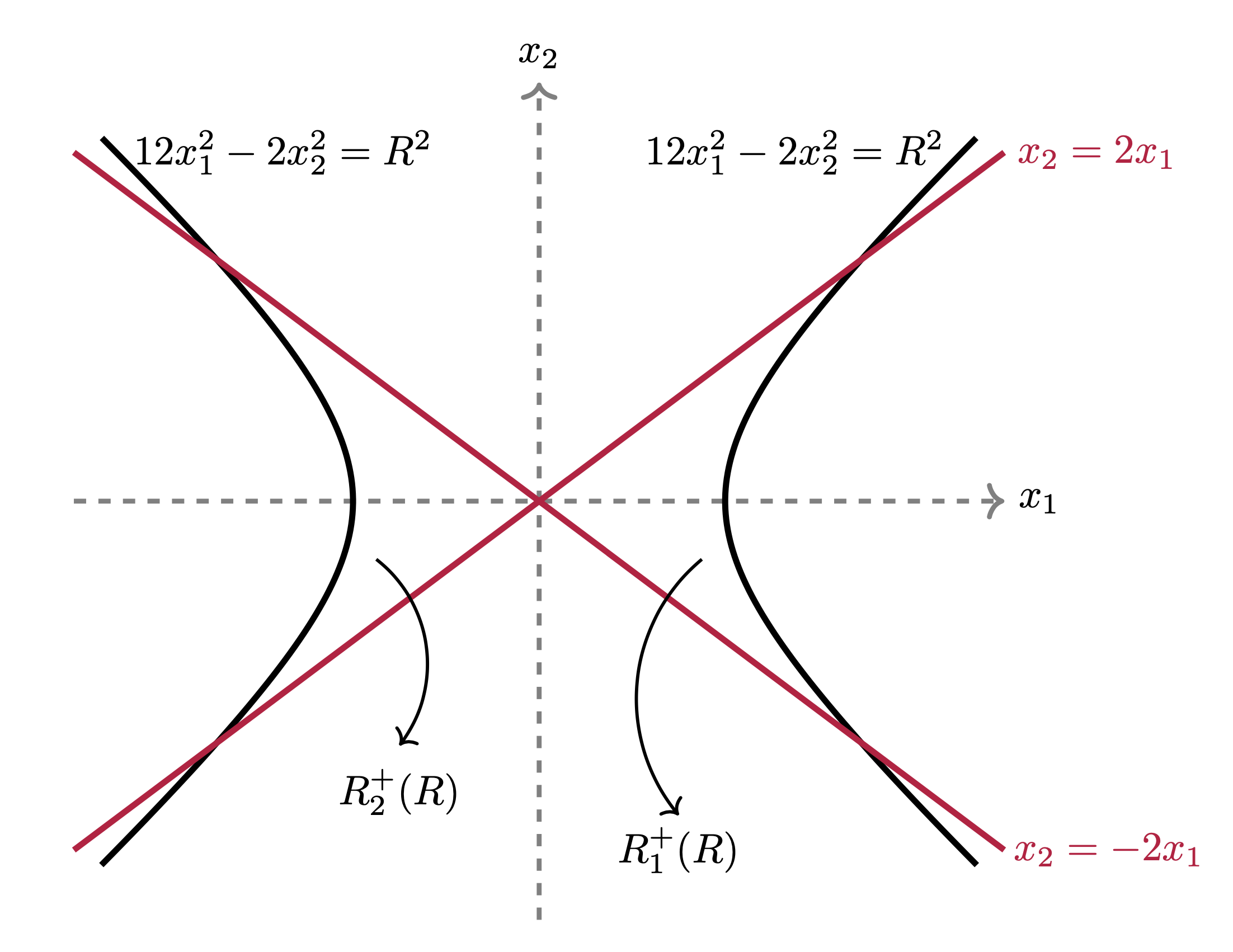}
\vspace{-5pt}
\caption{}
\label{fig:bound_regions}
\vspace{-5pt}
\end{figure}

The result for
\begin{equation*}
D^+_{c, \bm{\mu}, \bm{r}} (X) :=
\sum_{\substack{n \in \IZ+Q(\bm{\mu}) \\ 0<n\leq X}} a_{c,\bm{\mu},\bm{r}} (n)
\end{equation*}
follows from the same argument by counting the number of $\IZ^2 + \frac{\bm{\mu}+\bm{r}}{c}$ points within (see Figure \ref{fig:bound_regions})
\begin{align*}
R_1^+ (R) &:= \left\{ \bm{x} \in \mathbb{R}^2 :
-2x_1 \leq x_2 \leq 2x_1 \mbox{ and }
12x_1^2-2x_2^2 \leq R^2 \right\}, \\
R_2^+ (R) &:= \left\{ \bm{x} \in \mathbb{R}^2 :
2x_1 \leq x_2 \leq -2x_1 \mbox{ and }
12x_1^2-2x_2^2 \leq R^2 \right\}
\end{align*}
and noting that $\mathrm{area} \!\lp  R_1^+ (1) \rp + \mathrm{area} \!\lp  R_2^+ (1) \rp = \CA$.
\end{proof}

Lemma \ref{lem:a_mu_partial_sum_bound} immediately leads to the following two corollaries.
\begin{cor}\label{cor:dj_partial_sum_bound}
Let $c \in \IN$, $r \in \{0,1,\ldots,c-1\}$, $j \in \{0,1,2\}$, and $X \in \mathbb{R}^+$. Then we have
\begin{equation*}
\sum_{\substack{n \in \IZ+\b_j \\ 0<n\leq X \\ n \equiv r + \b_j \pmod{c}}} \!\!\!\!\!\! d_j (n)
\!=\!
\CA_{j,r,c} X + O \!\lp \max\! \left\{c^2, c \sqrt{X} \right\} \rp\!, \sum_{\substack{n \in \IZ+\b_j \\ -X \leq n < 0 \\ n \equiv r + \b_j \pmod{c}}} \!\!\!\!\!\! d_j (n)
\!=\!
\CA_{j,r,c} X + O \!\lp \max\! \left\{c^2, c \sqrt{X} \right\} \rp\!,
\end{equation*}
where the implied constants in the error terms independent of $j$ and $r$ and where
\begin{equation*}
\CA_{j,r,c} := \frac{\CA}{2c^2}
\lp \sum_{\bm{\mu} \in \mathcal{S}^+_j}  -  \sum_{\bm{\mu} \in \mathcal{S}^-_j} \rp
\sum_{\substack{0 \leq r_1,r_2 \leq c-1 \\ Q(\bm{\mu}+\bm{r}) \equiv r+\b_j \pmod{c}}}
1.
\end{equation*}
\end{cor}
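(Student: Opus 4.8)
The plan is to unfold $d_j$ into the false-indefinite building blocks of \eqref{eq:a_cmur_definition} and feed each piece into Lemma \ref{lem:a_mu_partial_sum_bound}. First I would recall that
\begin{equation*}
d_j(n) = \frac{1}{2}\lp \sum_{\bm{\mu} \in \mathcal{S}^+_j} - \sum_{\bm{\mu} \in \mathcal{S}^-_j} \rp a_{\bm{\mu}}(n), \qquad a_{\bm{\mu}}(n) = \sum_{0 \leq r_1,r_2 \leq c-1} a_{c,\bm{\mu},\bm{r}}(n),
\end{equation*}
where $a_{c,\bm{\mu},\bm{r}}(n)$ vanishes unless $n \equiv Q(\bm{\mu}+\bm{r}) \pmod{c}$, and that $\IZ + Q(\bm{\mu}) = \IZ + \b_j$ for $\bm{\mu} \in \mathcal{S}^\pm_j$ since $Q(\bm{\mu}) \equiv \b_j \pmod{1}$. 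Since $Q(\bm{\mu}+\bm{r}) - Q(\bm{\mu}) = B(\bm{\mu},\bm{r}) + Q(\bm{r}) \in \IZ$, imposing the extra restriction $n \equiv r+\b_j \pmod{c}$ is consistent with the support of $a_{c,\bm{\mu},\bm{r}}$ exactly when $Q(\bm{\mu}+\bm{r}) \equiv r+\b_j \pmod{c}$, in which case it is automatic. This gives
\begin{equation*}
\sum_{\substack{n \in \IZ+\b_j \\ 0 < n \leq X \\ n \equiv r+\b_j \pmod{c}}} d_j(n) = \frac{1}{2}\lp \sum_{\bm{\mu} \in \mathcal{S}^+_j} - \sum_{\bm{\mu} \in \mathcal{S}^-_j} \rp \sum_{\substack{0 \leq r_1,r_2 \leq c-1 \\ Q(\bm{\mu}+\bm{r}) \equiv r+\b_j \pmod{c}}} \sum_{\substack{n \in \IZ+Q(\bm{\mu}) \\ 0 < n \leq X}} a_{c,\bm{\mu},\bm{r}}(n),
\end{equation*}
and the obvious analogue on $-X \leq n < 0$.

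Then I would apply Lemma \ref{lem:a_mu_partial_sum_bound} to each innermost sum, replacing it by $\CA X/c^2 + O(\max\{1,\sqrt{X}/c\})$ with implied constant uniform in $\bm{\mu}$ and $\bm{r}$. Collecting the main terms yields
\begin{equation*}
\frac{\CA X}{2c^2}\lp \sum_{\bm{\mu} \in \mathcal{S}^+_j} - \sum_{\bm{\mu} \in \mathcal{S}^-_j} \rp \sum_{\substack{0 \leq r_1,r_2 \leq c-1 \\ Q(\bm{\mu}+\bm{r}) \equiv r+\b_j \pmod{c}}} 1 = \CA_{j,r,c}\, X,
\end{equation*}
which is exactly the constant $\CA_{j,r,c}$ defined in the statement. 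For the error, each of the eight vectors $\bm{\mu} \in \mathcal{S}^+_j \cup \mathcal{S}^-_j$ contributes at most $c^2$ admissible pairs $\bm{r}$, so the per-term bound accumulates to $O\lp c^2\max\{1,\sqrt{X}/c\}\rp = O\lp \max\{c^2, c\sqrt{X}\}\rp$; as $|\mathcal{S}^\pm_j|=4$ is independent of $j$ and the constants in Lemma \ref{lem:a_mu_partial_sum_bound} are uniform in $\bm{r},\bm{\mu}$, this implied constant is independent of $j$ and $r$. The negative-range assertion follows verbatim from the second identity in Lemma \ref{lem:a_mu_partial_sum_bound}.

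There is no genuine obstacle here; the argument is pure bookkeeping. The only point needing a moment's care is the compatibility of the two congruence conditions modulo $c$ on $n$ — namely that $Q(\bm{\mu}+\bm{r}) - Q(\bm{\mu}) \in \IZ$, so that the residue vectors $\bm{r}$ selected above coincide with those counted in the definition of $\CA_{j,r,c}$ — together with the trivial bound $O(c^2)$ on their number, which is precisely what converts the per-summand error $O(\max\{1,\sqrt{X}/c\})$ from Lemma \ref{lem:a_mu_partial_sum_bound} into the stated $O(\max\{c^2, c\sqrt{X}\})$.
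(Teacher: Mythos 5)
Your proof is correct and is exactly the bookkeeping the paper has in mind: the paper states only that Lemma \ref{lem:a_mu_partial_sum_bound} ``immediately leads to'' this corollary, and your unfolding of $d_j$ through the sets $\mathcal{S}^\pm_j$ and the residue decomposition $a_{\bm{\mu}} = \sum_{\bm{r}} a_{c,\bm{\mu},\bm{r}}$, followed by termwise application of that lemma, is the intended argument. The compatibility check $Q(\bm{\mu}+\bm{r})-Q(\bm{\mu})\in\IZ$ and the trivial $O(c^2)$ count of admissible residue vectors, which converts the per-term error $O(\max\{1,\sqrt{X}/c\})$ into the stated $O(\max\{c^2,c\sqrt{X}\})$, are both handled correctly.
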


\begin{cor}\label{cor:dj_bound}
For $j \in \{0,1,2\}$ and $n \in \IZ + \b_j$, we have $d_j (n) \ll \sqrt{|n|}$.
\end{cor}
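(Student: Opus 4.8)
The plan is to deduce this pointwise estimate from the partial-sum asymptotics already established in Lemma~\ref{lem:a_mu_partial_sum_bound} (equivalently Corollary~\ref{cor:dj_partial_sum_bound}) by the elementary observation that a single Fourier coefficient is the difference of two consecutive partial sums. Concretely, recall that $d_j(n) = \tfrac12\bigl(\sum_{\bm\mu\in\mathcal S_j^+} a_{\bm\mu}(n) - \sum_{\bm\mu\in\mathcal S_j^-} a_{\bm\mu}(n)\bigr)$ and that each $a_{\bm\mu}$ is supported on $\mathbb Z+Q(\bm\mu)=\mathbb Z+\beta_j$ (by the translation-multiplier computation in Proposition~\ref{P: U_j and u_j transformation}). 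Applying Lemma~\ref{lem:a_mu_partial_sum_bound} with $c=1$ gives, for $X\ge 1$ and $\bm\mu\in\mathcal S_j^\pm$,
\[
\sum_{\substack{n'\in\mathbb Z+\beta_j\\ 0<n'\le X}} a_{\bm\mu}(n') = \mathcal A\, X + O\!\bigl(\sqrt X\bigr),\qquad
\sum_{\substack{n'\in\mathbb Z+\beta_j\\ -X\le n'<0}} a_{\bm\mu}(n') = \mathcal A\, X + O\!\bigl(\sqrt X\bigr),
\]
with absolute implied constants.

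For $n\in\mathbb Z+\beta_j$ with $n\ge 1$ one then writes $a_{\bm\mu}(n)$ as the difference of the partial sum up to $n$ and the partial sum up to $n-1$; the main terms cancel and one is left with $a_{\bm\mu}(n)=\mathcal A + O(\sqrt n)=O(\sqrt n)$ (when $n-1<1$ one simply bounds the second, bounded, sum by $O(1)$). The analogous argument with the second displayed formula handles $n\le-1$. Summing over the finitely many $\bm\mu\in\mathcal S_j^\pm$ yields $d_j(n)\ll\sqrt{|n|}$ in the range $|n|\ge 1$. For the remaining values, there are only the finitely many admissible $n\in\{\beta_j,\beta_j-1\}$ with $|n|<1$; for each such $n$ the count $a_{\bm\mu}(n)$ is finite because the arc $\{12x_1^2-2x_2^2=|n|\}$ intersected with the relevant angular sector is compact, and since $|n|\ge\min\{\beta_j,1-\beta_j\}$ is bounded away from $0$, the inequality $d_j(n)\ll\sqrt{|n|}$ holds trivially after enlarging the constant.

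The substantive content — the geometry-of-numbers lattice-point count with its square-root error term — has already been carried out in Lemma~\ref{lem:a_mu_partial_sum_bound}, so no genuine difficulty remains here. The only place requiring a modicum of care is the small-$|n|$ regime, where the asymptotic is vacuous; this is dispatched by the finiteness of the count together with the fact that $|n|$ is bounded below on $\mathbb Z+\beta_j$.
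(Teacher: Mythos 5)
Your proof is correct and is essentially the paper's own route: the paper states that Corollary \ref{cor:dj_bound} follows immediately from Lemma \ref{lem:a_mu_partial_sum_bound} (with $c=1$), and the intended mechanism is exactly your differencing of consecutive partial sums, with the main terms $\mathcal{A}X$ cancelling and the $O(\sqrt X)$ error surviving. Your handling of the finitely many $|n|<1$ cases via the lower bound $|n|\ge\min\{\beta_j,1-\beta_j\}$ is a reasonable way to close the one gap the paper leaves implicit.
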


\section{Taylor Coefficients of the Integral Kernel}
\label{app:taylor_integral_kernel}

To get explicit statements for the main exponential term from Theorem \ref{T: Main Theorem}, one needs details on the local behavior of $\Phi_{\ell, 0} (t)$ around $t=0$. 
Both for this reason and also to provide more details on the kernel function in general, here we determine its Taylor coefficients at $t=0$. We compute these coefficients by analyzing the false-indefinite theta functions $u_j$ using the Euler--Maclaurin summation formula, which also ties the computation of the main exponential term in $\a_j(n)$ to studies such as \cite{BCN} or its variants applying Wright's Circle Method. We start with Theorem \ref{prop:uj_modular_mordell_representation} stating that, for $V>0$, $j \in \{0,1,2\}$, and integers $0 \leq h,h' < k$ satisfying $hh' \equiv -1 \pmod{k}$,
\begin{equation}\label{eq:uj_modular_mordell_rep_h_k2}
	u_j \!\lp  \frac{h}{k} + \frac{iV}{k^2} \rp
	=
	\frac{ik}{V} \sum_{\ell=0}^2 \Psi_{M_{h,k}} (j,\ell)
	  \mathcal{I}_{\ell, \frac{h'}{k}} \!\lp \frac{h'}{k} + \frac{i}{V} \rp .
\end{equation}
We first show that the asymptotic expansion of the right-hand side is determined by the Taylor coefficients of $\Phi_{\ell, \frac{h'}{k}} (t)$ at $t=0$.

\begin{lem}\label{lem:calI_asymptotic_expansion_taylor}
For $\ell \in \{0,1,2\}$, $N, k \in \IN$, and $h' \in \IZ$ with $\gcd (h',k) = 1$ we have
\begin{equation*}
\mathcal{I}_{\ell, \frac{h'}{k}} \!\lp \frac{h'}{k} + \frac{i}{V} \rp
=
\frac{1}{\pi i} \sum_{r=0}^{N-1} \lp \frac{V}{2\pi} \rp^{r+1}
  \Phi^{(r)}_{\ell, \frac{h'}{k}} (0)
+ O \!\lp V^{N+1} \rp
\quad \mbox{as } V \to 0^+  .
\end{equation*}
\end{lem}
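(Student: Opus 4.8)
The plan is to extract the small-$V$ asymptotics of $\mathcal{I}_{\ell, \frac{h'}{k}}(\frac{h'}{k} + \frac{i}{V})$ directly from the Mordell-type integral representation in Theorem \ref{prop:uj_modular_mordell_representation}. Setting $w := \frac{i}{V}$ (so $\tau = \frac{h'}{k} + w$ and $\tau + \frac{d}{c} = w$ when we take $d/c = -h'/k$; more precisely we work with $\tau_1 = h'/k$ as in \eqref{eq:integral_kernel_definition}), recall that
\begin{equation*}
\mathcal{I}_{\ell, \frac{h'}{k}}\!\lp \tfrac{h'}{k} + \tfrac{i}{V} \rp
=
\frac{1}{\pi i} \psum_{n\in\Z+\b_\ell} d_\ell(n)e^{-\frac{2\pi i h' n}{k}} \cdot (-1)
\PV\int_0^\infty \frac{e^{-2\pi t / V}}{t-n}\, dt
\end{equation*}
after the change of variable identifying the exponent $2\pi i(\tau + d/c)t$ with $-2\pi t/V$ (with the appropriate sign bookkeeping from Theorem \ref{prop:uj_modular_mordell_representation}). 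As $V \to 0^+$, the factor $e^{-2\pi t/V}$ concentrates the integral near $t = 0$, so the asymptotics should be governed by the behavior of $\frac{1}{t-n}$ — summed against $d_\ell(n)e^{-2\pi i h'n/k}$ — near $t=0$, which is exactly $\Phi_{\ell,\frac{h'}{k}}(t)$.

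The key steps, in order: \textbf{(1)} Justify interchanging the symmetric sum over $n$ with the integral over $t$. As in the proofs of Theorem \ref{prop:uj_modular_mordell_representation} and Lemma \ref{lem:calI_bounds}, one writes $\frac{1}{t-n} = \bigl(\frac{1}{t-n}+\frac{1}{n}\bigr) - \frac{1}{n}$; the $-\frac{1}{n}$ piece gives $-\frac{1}{2\pi^2 i}\cdot\frac{V}{2\pi}\cdot(\text{something}) \cdot \psum \frac{d_\ell(n)}{n}e^{-2\pi i h' n/k}$, which is convergent by Lemma \ref{lem:1_over_n_term_convergence} and contributes only at order $V$, consistent with the $r=0$ term; the remaining piece has an absolutely convergent sum (using $|d_\ell(n)| \ll \sqrt{|n|}$ from Corollary \ref{cor:dj_bound} together with $\frac{t}{n(t-n)} \ll n^{-2}$ on bounded $t$), so Fubini applies and one gets $\PV\int_0^\infty \Phi_{\ell,\frac{h'}{k}}(t)\, e^{-2\pi t/V}\, dt$ up to a harmless $\frac{1}{\eta}$-type normalization. \textbf{(2)} Split the integral as $\PV\int_0^{\varepsilon} + \int_{\varepsilon}^\infty$ for a small fixed $\varepsilon$ below the first pole of $\Phi_{\ell,\frac{h'}{k}}$; the tail is $O(e^{-c/V})$ for some $c>0$, hence negligible to all polynomial orders. \textbf{(3)} On $[0,\varepsilon]$, $\Phi_{\ell,\frac{h'}{k}}(t)$ is smooth (even real-analytic, being a uniformly convergent sum of rational functions with no poles there — or, for $\ell = 0$, after we note the relevant pole at $t = \frac{1}{48}$ lies outside a small neighborhood of $0$), so Taylor-expand $\Phi_{\ell,\frac{h'}{k}}(t) = \sum_{r=0}^{N-1} \frac{\Phi^{(r)}_{\ell,\frac{h'}{k}}(0)}{r!} t^r + O(t^N)$ and integrate term by term against $e^{-2\pi t/V}$ using $\int_0^\infty t^r e^{-2\pi t/V}\, dt = r!\,(V/2\pi)^{r+1}$, with the error from extending $\int_0^\varepsilon$ to $\int_0^\infty$ again exponentially small and the $O(t^N)$ remainder contributing $O(V^{N+1})$. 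Assembling these gives the claimed expansion.

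The main obstacle I expect is \textbf{Step (1)}: the symmetric sum $\psum$ is only conditionally convergent, so one cannot naively push it through the integral, and one must replicate (or invoke) the regularization argument of Theorem \ref{prop:uj_modular_mordell_representation} — splitting off the $\frac1n$ term and controlling the resulting conditionally convergent series via summation by parts and Corollary \ref{cor:dj_partial_sum_bound}. A secondary subtlety is bookkeeping the principal-value prescription uniformly in $V$ when $\ell = 0$ (because of the pole of $\Phi_{0,\frac{h'}{k}}$ at $t = \frac{1}{48}$), but since that pole sits at positive distance from $t=0$ it only affects the exponentially small tail and does not enter the polynomial expansion; writing $\Phi_{0,\frac{h'}{k}} = \Phi^*_{0,\frac{h'}{k}} + \frac{d_0(1/48)}{t - 1/48}e^{\frac{2\pi i h'}{48k}}$ as in \eqref{phi_kernel_pole_removed_decomposition} makes this transparent. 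Everything else is routine Watson's-lemma-style asymptotics.
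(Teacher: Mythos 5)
Your proposal is correct in outline but takes a genuinely different route from the paper. You first assemble the Laplace-type integral $\PV\int_0^\infty \Phi_{\ell,\frac{h'}{k}}(t)\,e^{-2\pi t/V}\,dt$ by pushing the symmetric sum through the $t$-integral, and then run Watson's lemma: Taylor-expand $\Phi_{\ell,\frac{h'}{k}}$ at $t=0$ and discard an exponentially small tail. The paper never forms that integral. It keeps the sum over $n$ on the outside throughout: after rescaling $t\mapsto Vt$ it expands the kernel geometrically, $\frac{1}{n-Vt}=\sum_{r=0}^{N-1}\frac{V^rt^r}{n^{r+1}}+\frac{V^Nt^N}{n^N(n-Vt)}$, identifies $\psum_{n}d_\ell(n)e^{\frac{2\pi ih'n}{k}}n^{-r-1}=-\frac{1}{r!}\Phi^{(r)}_{\ell,\frac{h'}{k}}(0)$ term by term, and reduces the whole error analysis to the single uniform bound $\big|\PV\int_0^\infty t^Ne^{-2\pi t}(n-Vt)^{-1}\,dt\big|\ll|n|^{-1}$, proved by splitting at $\frac{n}{2V},\frac{3n}{2V}$ and symmetrizing about the pole. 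The payoff of that ordering is that only one pole at a time ever sits inside an integral, and the resulting family of integrals is absolutely summable against $d_\ell(n)\ll\sqrt{|n|}$ for every $N\ge1$; no Fubini-type interchange across the conditionally convergent $\psum$ is needed beyond Lemma \ref{lem:1_over_n_term_convergence} for the $r=0$ coefficient. Your version is arguably more conceptual (it makes the Watson's-lemma structure explicit) but pays for it in the interchange.

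That interchange is the one place your write-up is under-justified. In Step (1) the claim that the $\frac{t}{n(t-n)}$ piece gives an ``absolutely convergent sum\ldots so Fubini applies'' is not correct as stated: the integration range is all of $(0,\infty)$, which contains the infinitely many poles $t=n$ with $n\in(\Z+\b_\ell)\cap\IR^+$, and $\int_0^\infty\big|\frac{t}{n(t-n)}\big|e^{-2\pi t/V}\,dt$ diverges for each such $n$ -- only the principal value is finite. You do flag this as the main obstacle, and the fix is exactly the contour-rotation estimate from the proof of Theorem \ref{prop:uj_modular_mordell_representation} with $\t_2=\frac1V$, namely $\big|\PV\int_0^\infty te^{-2\pi t/V}(t-n)^{-1}\,dt\big|\le\pi ne^{-2\pi n/V}+O\lp V^2n^{-1}\rp$, which makes the sum absolutely summable against $\frac{|d_\ell(n)|}{|n|}$. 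The same rotation (started at $t=\e$) is also what is actually needed to substantiate your Step (2) claim that the tail $\int_\e^\infty$ is $O(e^{-c/V})$, since that tail still carries infinitely many PV poles. Finally, a sign slip: with $-\frac dc=\frac{h'}{k}$ the character is $e^{+\frac{2\pi ih'n}{k}}$ and there is no extra factor $(-1)$ before rescaling, so your displayed formula as written would produce $\Phi_{\ell,-\frac{h'}{k}}$ with the wrong overall sign; you defer the bookkeeping, and once it is done the constants match the lemma exactly.
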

\begin{proof}
Theorem \ref{prop:uj_modular_mordell_representation} gives that
\begin{equation*}
\mathcal{I}_{\ell, \frac{h'}{k}} \!\lp \frac{h'}{k} + \frac{i}{V} \rp
= - \frac{V}{\pi i} \   \psum_{n\in\Z+\b_\ell} d_\ell(n) e^{\frac{2 \pi i h' n}{k}}
\PV\int_0^\infty \frac{e^{-2\pi t}}{n-Vt} dt .
\end{equation*}
Noting that $\frac{1}{n-Vt} = \sum_{r=0}^{N-1} \frac{V^r   t^r}{n^{r+1}} + \frac{1}{n-tV} \frac{V^N   t^N}{n^N}$,
we decompose
\begin{multline*}
\mathcal{I}_{\ell, \frac{h'}{k}} \!\lp \frac{h'}{k} + \frac{i}{V} \rp
= - \frac{1}{\pi i} \sum_{r=0}^{N-1} V^{r+1}
\   \psum_{n\in\Z+\b_\ell} \frac{d_\ell(n) e^{\frac{2 \pi i h' n}{k}}}{n^{r+1}}
\int_0^\infty t^r e^{-2\pi t} dt
\\
- \frac{V^{N+1}}{\pi i}
\   \psum_{n\in\Z+\b_\ell} \frac{d_\ell(n) e^{\frac{2 \pi i h' n}{k}}}{n^{N}}
\PV\int_0^\infty  \frac{t^Ne^{-2\pi t}}{n-Vt} dt  .
\end{multline*}
The decomposition is justified since $n \neq 0$ and by the convergence of the involved terms as
\begin{equation*}
\int_0^\infty t^r e^{-2\pi t} dt = \frac{r!}{(2 \pi)^{r+1}}
\andd
\psum_{n\in\Z+\b_\ell} \frac{d_\ell(n) e^{\frac{2 \pi i h' n}{k}}}{n^{r+1}}
= - \frac{1}{r!} \Phi^{(r)}_{\ell, \frac{h'}{k}} (0) .
\end{equation*}
So the lemma statement follows if we can show
\begin{equation}\label{eq:calI_asymptotic_expansion_taylor_pv_bound}
\left| \PV\int_0^\infty \frac{t^Ne^{-2\pi t}}{n-Vt} dt \right| \ll \frac{1}{|n|},
\end{equation}
which thanks to the fact that $d_\ell (n) \ll \sqrt{|n|}$ (by Corollary \ref{cor:dj_bound}) implies
\begin{equation*}
\sum_{n\in\Z+\b_\ell} \frac{|d_\ell(n)|}{|n|^{N}}
\left| \PV\int_0^\infty \frac{t^Ne^{-2\pi t}}{n-Vt}   dt \right| \ll 1 .
\end{equation*}

To prove \eqref{eq:calI_asymptotic_expansion_taylor_pv_bound} for $n < 0$ (where the pole at $t = \frac{n}{V}$ is not on the integration path), we note that $\frac{1}{|n-Vt|} \leq \frac{1}{|n|}$ for $t \geq 0$. For $n > 0$, we start by writing
\begin{equation*}
\PV\int_0^\infty \frac{t^Ne^{-2\pi t}}{n-Vt} dt
=
\int_0^{\frac{n}{2V}} \frac{t^Ne^{-2\pi t}}{n-Vt} dt
+ \PV\int_{\frac{n}{2V}}^{\frac{3n}{2V}} \frac{t^Ne^{-2\pi t}}{n-Vt} dt
+ \int_{\frac{3n}{2V}}^\infty \frac{t^Ne^{-2\pi t}}{n-Vt} dt .
\end{equation*}
If $t \in [0,\frac{n}{2V}]$ or $t\ge\frac{3n}{2V}$, then we have $\frac{1}{|n-Vt|} \leq \frac{2}{n}$ so that
\begin{equation*}
\left| \int_0^{\frac{n}{2V}} \frac{t^Ne^{-2\pi t}}{n-Vt} dt \right|
+ \left| \int_{\frac{3n}{2V}}^\infty \frac{t^Ne^{-2\pi t}}{n-Vt} dt \right|
\leq
\frac{2}{n} \int_0^\infty t^N e^{-2\pi t} dt
\end{equation*}
and these terms obey the claimed upper bound. For the second integral, we make the change of variables as $t \mapsto \frac{2n}{V} - t$ for one half of this term to remove the pole at $t = \frac{n}{V}$ and write
\begin{equation*}
\PV\int_{\frac{n}{2V}}^{\frac{3n}{2V}} \frac{t^Ne^{-2\pi t}}{n-Vt} dt
=
\frac{1}{2V} \int_{\frac{n}{2V}}^{\frac{3n}{2V}}
\frac{t^N e^{-2\pi t} - \lp \frac{2n}{V} - t \rp^N e^{-2\pi \left(\frac{2n}{V} - t\right)}}{\frac{n}{V} - t} dt.
\end{equation*}
By the mean value theorem we have
\begin{equation*}
\frac{t^N e^{-2\pi t} - \lp \frac{2n}{V} - t \rp^N e^{-2\pi \left(\frac{2n}{V} - t\right)}}{2 \lp t- \frac{n}{V} \rp}
= \xi^{N-1} \lp  N- 2 \pi \xi \rp e^{-2 \pi \xi}
\end{equation*}
for some $\xi$ between $t$ and $\frac{2n}{V}-t$. Note that for $t \in [\frac{n}{2V},\frac{3n}{2V}]$, we have 
\begin{equation*}
\xi \geq \frac{n}{2V} \geq \frac{t}{3} \mbox{ and }
\xi \leq \frac{3n}{2V} \leq 3t
\end{equation*}
so that
\begin{equation*}
\left| \xi^N \lp  N- 2 \pi \xi \rp e^{-2 \pi \xi} \right|
\leq
\xi^N \lp  N+ 2 \pi \xi \rp e^{-2 \pi \xi}
\leq
(3t)^N \lp N + 6 \pi t \rp e^{-\frac{2\pi t}3}
\ \ \mbox{ and }\ \
\frac{1}{V \xi} \leq \frac{2}{n} .
\end{equation*}
Therefore, we have
\begin{equation*}
\left| \PV\int_{\frac{n}{2V}}^{\frac{3n}{2V}} \frac{t^Ne^{-2\pi t}}{n-Vt} dt \right|
\leq
\frac{2}{n} \int_0^\infty (3t)^N \lp N + 6 \pi t \rp e^{-\frac{2\pi t}3}  d t
\end{equation*}
and this term also obeys the claimed bound in \eqref{eq:calI_asymptotic_expansion_taylor_pv_bound}.
\end{proof}

Inserting Lemma \ref{lem:calI_asymptotic_expansion_taylor} in equation \eqref{eq:uj_modular_mordell_rep_h_k2}, we obtain
\begin{equation}\label{eq:uj_asymptotic_expansion_taylor}
u_j \!\lp  \frac{h}{k} + \frac{iV}{k^2} \rp
=
\frac{k}{2\pi^2}
\sum_{r=0}^{N-1} \lp \frac{V}{2\pi} \rp^r
\sum_{\ell=0}^2 \Psi_{M_{h,k}} (j,\ell)   \Phi^{(r)}_{\ell, \frac{h'}{k}} (0)
+ O \!\lp V^{N} \rp
\quad \mbox{as } V \to 0^+ .
\end{equation}
We next provide an independent computation from the left-hand side of equation \eqref{eq:uj_modular_mordell_rep_h_k2}.

\begin{lem}\label{lem:uj_asymptotic_expansion_taylor}
For $j \in \{0,1,2\}$, $N, k \in \IN$, and $h \in \IZ$ with $\gcd (h,k) = 1$ we have, as $V \to 0^+$,
\begin{multline*}
u_j \!\lp  \frac{h}{k} + \frac{iV}{k^2} \rp
=
\sum_{r=0}^{N-1} (8 \pi V)^r
\lp \sum_{\bm{\mu} \in \mathcal{S}^+_j}  -  \sum_{\bm{\mu} \in \mathcal{S}^-_j}  \rp
\sum_{\a = 0}^3   \sum_{0 \leq r_1,r_2 < k}
e^{ \frac{2 \pi ih}{k} \lp 12 (r_1+\mu_1)^2 - 2 (r_2+\mu_2)^2 \rp}
\\
\times \left(
- \frac{\wt{B}_{2r+2} \!\lp \frac{2(\mu_1+r_1)-(\mu_2+r_2)-k\a}{4k}\rp +
\wt{B}_{2r+2} \!\lp \frac{2(\mu_1+r_1)+\mu_2+r_2+k\a}{4k}\rp   }{(2r+2)!}
\int_0^\infty f^{(2r+1,0)}(0,x) dx  \right.
\\
\left.
+\!\! \sum_{n_1+n_2 = 2r}  \!\!
\frac{\wt{B}_{n_1+1} \!\lp \frac{2(\mu_1+r_1)-(\mu_2+r_2)-k\a}{4k}\rp
\wt{B}_{n_2+1} \!\lp \frac{2(\mu_1+r_1)+\mu_2+r_2+k\a}{4k}\rp   }{(n_1+1)!   (n_2+1)!}
f^{(n_1,n_2)}(\bm{0})
\right)  + O\left(V^N\right),
\end{multline*}
where $\mathcal{S}^\pm_j$ are defined in Section \ref{sec:parity_partition_false_indefinite} and
$f(\bm{x}) := e^{-x_1^2-10x_1x_2-x_2^2}$.
\end{lem}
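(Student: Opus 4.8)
The plan is to compute the small-$V$ asymptotic expansion of $u_j(\frac hk + \frac{iV}{k^2})$ directly from its false-indefinite theta representation by applying a (multi-dimensional) Euler--Maclaurin summation formula. First I would start from \eqref{eq:u0_u1_false_indefinite_description}, which writes $u_j$ as an alternating sum of the $f_{\bm\mu}$, and recall that $f_{\bm\mu}$ is, up to the constant term $-\frac{\operatorname{arccosh}(5)}{\pi}\delta_{\bm\mu\in\Z^2}$, a sum over $\bm n\in\Z^2+\bm\mu$ of $\big(1+\sgn(2n_1+n_2)\sgn(2n_1-n_2)\big)q^{12n_1^2-2n_2^2}$. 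Plugging $q = e^{2\pi i(h/k+iV/k^2)}$ and splitting the lattice $\Z^2+\bm\mu$ into residue classes $\bm r\in\{0,1,\dots,k-1\}^2$ via $\bm n = k\bm m + \bm r + \bm\mu$, the quadratic form $12n_1^2-2n_2^2$ gets multiplied by $k^2$ inside the exponential, which cancels the $k^2$ in the denominator of $V$, while the phase $e^{2\pi i h Q(\bm r+\bm\mu)/k}$ factors out. The sign condition $2n_1\pm n_2>0$ (or $<0$), when written in the rotated coordinates that diagonalize $Q$, cuts the summation region into cones; I would parametrize these cones by $\alpha\in\{0,1,2,3\}$, matching the four-fold decomposition that appears in the statement.

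Next I would apply Euler--Maclaurin to each cone sum. After the change of variables the summand on a given cone has the shape $e^{-\text{(positive quadratic form)}\cdot V}$ times indicator functions, i.e. $f(\sqrt{V}\,\bm m)$-type expressions up to a shift, where $f(\bm x)=e^{-x_1^2-10x_1x_2-x_2^2}$ is precisely the Gaussian associated to the form $12n_1^2-2n_2^2$ in the $P$-coordinates of \eqref{eq:false_indef_P_choice}. The two-variable Euler--Maclaurin expansion along a wedge produces: a leading term given by the integral $\int_0^\infty f^{(2r+1,0)}(0,x)\,dx$ along the edge of the cone (this is where the $\wt B_{2r+2}$ period Bernoulli polynomials evaluated at the two shift-arguments $\frac{2(\mu_1+r_1)\mp(\mu_2+r_2)\mp k\alpha}{4k}$ enter, via the standard $\sum_{m\ge0}g(m+\beta)$ expansion with boundary term controlled by $\wt B_{n+1}(\beta)$), plus a ``corner'' contribution $f^{(n_1,n_2)}(\bm 0)$ weighted by a product $\wt B_{n_1+1}(\cdot)\wt B_{n_2+1}(\cdot)$ over $n_1+n_2=2r$. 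Assembling the contributions from all cones, all residues $\bm r$, all $\bm\mu\in\mathcal S_j^\pm$ with the alternating sign, and collecting powers of $V$, I would match the coefficient of $V^r$ to $(8\pi V)^r$ after absorbing the factors $2\pi$ from $q=e^{2\pi i\tau}$ and the rotation Jacobian (the factor $8\pi$ comes from $2\pi$ times the $4$ from the $4k$ in the denominators times a factor from the form); the constant term of $f_{\bm\mu}$ cancels against the contribution of the full lattice sum as $V\to0^+$, consistent with the absence of a constant term in \eqref{eq:Uj_general_form}.

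The main obstacle will be bookkeeping: correctly identifying the four cones and the boundary rays for each sign pattern in the rotated coordinates, tracking which shifted Bernoulli argument ($\frac{2(\mu_1+r_1)-(\mu_2+r_2)-k\alpha}{4k}$ versus $\frac{2(\mu_1+r_1)+(\mu_2+r_2)+k\alpha}{4k}$) goes with which edge, and making sure the error term in the truncated Euler--Maclaurin expansion is genuinely $O(V^N)$ uniformly. The uniformity is not automatic because the number of lattice points in the cone near the vertex and the decay rate of $f$ interact; I would control this by noting that $f$ and all its derivatives decay like a Gaussian and that the remainder in Euler--Maclaurin for a smooth rapidly-decaying function of $\sqrt V\,\bm m$ is bounded by $V^N$ times an $L^1$-norm of an $N$-th order mixed derivative of $f$, which is finite. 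An alternative, arguably cleaner route would be to invoke a packaged two-variable Euler--Maclaurin-along-a-cone statement from the literature on indefinite theta asymptotics (as in \cite{BCN} and its variants, which the paper explicitly references here) and simply specialize it; I would present the argument in that form to keep the computation tractable, deferring the purely mechanical derivative evaluations of $f$ to the remark following the lemma.
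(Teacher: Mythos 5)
Your proposal is correct and takes essentially the same route as the paper, which simply invokes the two-dimensional Euler--Maclaurin summation formula (citing \cite{BJM}) applied to the false-indefinite theta representation of $u_j$ after splitting into residue classes mod $k$ and into cones. Your more detailed unpacking of the cone decomposition, the shifted Bernoulli boundary terms, and the Gaussian error control fills in exactly the bookkeeping the paper leaves implicit.
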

\begin{proof}
The result follows from the two-dimensional Euler--Maclaurin formula (see e.g.~\cite{BJM}).
\end{proof}

Comparing Lemma \ref{lem:uj_asymptotic_expansion_taylor} to equation \eqref{eq:uj_asymptotic_expansion_taylor} we find the Taylor coefficients of $\Phi_{\ell, \frac{h'}{k}} (t)$ at $t=0$.
\nolisttopbreak
\begin{prop}\label{prop:integral_kernel_taylor_coefficients}
For $j \in \{0,1,2\}$, integers $0 \leq h,h' < k$ satisfying $hh' \equiv -1 \pmod{k}$, and $r \in \mathbb{N}_0$ we have
\begin{multline*}
\sum_{\ell=0}^2 \Psi_{M_{h,k}} (j,\ell)   \Phi^{(r)}_{\ell, \frac{h'}{k}} (0)
=
\frac{(4\pi)^{2r+2}}{8k}
\lp \sum_{\bm{\mu} \in \mathcal{S}^+_j}  -  \sum_{\bm{\mu} \in \mathcal{S}^-_j}  \rp
\sum_{\a = 0}^3   \sum_{0 \leq r_1,r_2 < k}
e^{ \frac{2 \pi ih}{k} \lp 12 (r_1+\mu_1)^2 - 2 (r_2+\mu_2)^2 \rp}
\\
\times \left(
- \frac{\wt{B}_{2r+2} \!\lp \frac{2(\mu_1+r_1)-(\mu_2+r_2)-k\a}{4k}\rp +
\wt{B}_{2r+2} \!\lp \frac{2(\mu_1+r_1)+\mu_2+r_2+k\a}{4k}\rp   }{(2r+2)!}
\int_0^\infty f^{(2r+1,0)}(0,x) dx  \right.
\\
\left.
+\!\! \sum_{n_1+n_2 = 2r}  \!\!
\frac{\wt{B}_{n_1+1} \!\lp \frac{2(\mu_1+r_1)-(\mu_2+r_2)-k\a}{4k}\rp
\wt{B}_{n_2+1} \!\lp \frac{2(\mu_1+r_1)+\mu_2+r_2+k\a}{4k}\rp   }{(n_1+1)!   (n_2+1)!}
f^{(n_1,n_2)}(\bm{0})
\right) .
\end{multline*}
\end{prop}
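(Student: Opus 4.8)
The plan is to prove Proposition~\ref{prop:integral_kernel_taylor_coefficients} by comparing two asymptotic expansions of the same function $u_j\!\lp \frac hk + \frac{iV}{k^2} \rp$ as $V \to 0^+$: the expansion coming from the Mordell-type modular transformation \eqref{eq:uj_modular_mordell_rep_h_k2} combined with Lemma~\ref{lem:calI_asymptotic_expansion_taylor}, and the expansion coming from the two-dimensional Euler--Maclaurin formula in Lemma~\ref{lem:uj_asymptotic_expansion_taylor}. Since, for each fixed $N$, both expansions take the form $\sum_{r=0}^{N-1} c_r V^r + O(V^N)$, uniqueness of asymptotic expansions forces the coefficients $c_r$ to agree, and the proposition is precisely the identity one obtains by solving that coefficient comparison for the linear combination $\sum_{\ell=0}^2 \Psi_{M_{h,k}}(j,\ell)\Phi^{(r)}_{\ell,\frac{h'}{k}}(0)$.

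In detail, the first step is to recall equation \eqref{eq:uj_asymptotic_expansion_taylor}, which is obtained by inserting Lemma~\ref{lem:calI_asymptotic_expansion_taylor} into \eqref{eq:uj_modular_mordell_rep_h_k2} and simplifying: there the coefficient of $V^r$ in $u_j\!\lp \frac hk + \frac{iV}{k^2} \rp$ equals $\frac{k}{2\pi^2}(2\pi)^{-r}\sum_{\ell=0}^2 \Psi_{M_{h,k}}(j,\ell)\,\Phi^{(r)}_{\ell,\frac{h'}{k}}(0)$, where the powers of $2\pi$ come from the $\Gamma$-factor $\int_0^\infty t^r e^{-2\pi t}\,dt = r!/(2\pi)^{r+1}$ and from the prefactor $\frac{ik}{V}$ in \eqref{eq:uj_modular_mordell_rep_h_k2}. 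The second step is to recall Lemma~\ref{lem:uj_asymptotic_expansion_taylor}, in which the coefficient of $V^r$ in the same function equals $(8\pi)^r$ times the finite sum over $\bm\mu \in \mathcal{S}^\pm_j$, $\a \in \{0,1,2,3\}$, and $0 \le r_1,r_2 < k$ of the Bernoulli-polynomial/derivative expression that appears verbatim in the statement. The third step is to equate the two coefficients of $V^r$ and isolate $\sum_\ell \Psi_{M_{h,k}}(j,\ell)\Phi^{(r)}_{\ell,\frac{h'}{k}}(0)$; this multiplies the Euler--Maclaurin sum by the scalar $\frac{2\pi^2 (2\pi)^r (8\pi)^r}{k} = \frac{2\pi^2 (16\pi^2)^r}{k} = \frac{(4\pi)^{2r+2}}{8k}$, which is exactly the prefactor claimed.

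Because Lemmas~\ref{lem:calI_asymptotic_expansion_taylor} and~\ref{lem:uj_asymptotic_expansion_taylor} already carry all of the analytic content, there is no substantial conceptual obstacle left. The one place that genuinely needs care — and the step I would carry out most carefully — is the bookkeeping of the powers of $2\pi$ and $\pi$ entering through the $\Gamma$-factor, the prefactor $\frac{ik}{V}$, and the normalization $\lp \frac{V}{2\pi}\rp^r$ in \eqref{eq:uj_asymptotic_expansion_taylor}, so that the resulting scalar collapses to the clean form $(4\pi)^{2r+2}/(8k)$; an off-by-a-power-of-$\pi$ slip is the only error that could realistically arise. One should also note explicitly that term-by-term comparison is legitimate because both expansions are valid for every $N$ with remainder $O(V^N)$, so choosing $N > r$ determines the $V^r$-coefficient unambiguously.
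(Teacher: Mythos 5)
Your proposal is correct and is exactly the paper's argument: the paper obtains the proposition by the same comparison of the coefficient of $V^r$ in \eqref{eq:uj_asymptotic_expansion_taylor} (i.e., Lemma \ref{lem:calI_asymptotic_expansion_taylor} inserted into \eqref{eq:uj_modular_mordell_rep_h_k2}) with that in Lemma \ref{lem:uj_asymptotic_expansion_taylor}, and your bookkeeping of the scalar $\frac{2\pi^2(2\pi)^r(8\pi)^r}{k}=\frac{(4\pi)^{2r+2}}{8k}$ checks out.
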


For the main exponential term in the expansion of $\a_j (n)$ in Theorem \ref{T: Main Theorem}, we need these values for $\Phi_{\ell, 0} (t)$, which can be obtained by specializing Proposition \ref{prop:integral_kernel_taylor_coefficients} to $k=1$ (with $h=h'=0$ and $M_{h,k} = S$).
For the convenience of the reader we note the first few Taylor coefficients in Table \ref{tab:kernel_taylor_coefficients}.

\begin{table}[H]
\centering
\begin{tabular}{| c || c | c | c |}
\toprule
& $j=0$ & $j=1$ & $j=2$
\\
\hline
$r=0$ & $0$ & $4 \pi^2$ & $4 \pi^2$
\\
\hline
$r=1$ & $16 \pi^4$ & $\frac{23}{3} \pi^4$ & $\frac{50}{3} \pi^4$
\\
\hline
$r=2$ & $\frac{284}{3} \pi^6$ & $\frac{9745}{72} \pi^6$ & $\frac{2929}{18} \pi^6$
\\
\hline
$r=3$ & $\frac{32881}{18} \pi^8$ & $\frac{3965831}{2592} \pi^8$ & $\frac{769033}{324} \pi^8$
\\
\hline
$r=4$ &
$\frac{20222423}{648} \pi^{10}$ & $\frac{4241759521}{124416} \pi^{10}$ &
$\frac{359054305}{7776} \pi^{10}$
\\
\bottomrule
\end{tabular}
\vspace{-8pt}
\caption{\rule{0cm}{1cm}Values of $\displaystyle\sum_{\ell=0}^2 \Psi_{S} (j,\ell)   \Phi^{(r)}_{\ell, 0} (0)$ for the first few $r$.}
\label{tab:kernel_taylor_coefficients}
\vspace{-8pt}
\end{table}

\end{document}